\setlist[enumerate]{leftmargin=.5in}
\setlist[itemize]{leftmargin=.5in}
\crefname{hypothesis}{Hypothesis}{Hypotheses}
\title{Testing topological conjugacy of time series
	\thanks{
    \funding{
        The authors gratefully acknowledge the support of Dioscuri program initiated by the Max Planck Society, jointly managed with the National Science Centre (Poland), and mutually funded by the Polish Ministry of Science and Higher Education and the German Federal Ministry of Education and Research. J S-R was also supported by National Science Centre (Poland) grant 2019/35/D/ST1/02253. 
    }}
}
\author{
    Pawe\l{} D\l{}otko
    \thanks{Dioscuri Centre in TDA, Institute of Mathematics, Polish Academy of Sciences, ul. \'Sniadeckich~8, 00-656 Warszawa, Poland,
    (\email{pdlotko@impan.pl}, \email{michal.lipinski@impan.pl}, \email{j.signerska@impan.pl})
    (\url{https://dioscuri-tda.org/members/pawel.html}, \url{https://sites.google.com/view/michal-lipinski}, \url{https://dioscuri-tda.org/members/justyna.html}).}
    \and 
    Micha\l{} Lipi\'nski
    \footnotemark[2] 
    \thanks{
        Institute of Computer Science and Computational Mathematics, Jagiellonian University, ul. \L{}ojasiewicza~6, 30-348 Krak\'ow, Poland
    }
    \and 
    Justyna Signerska-Rynkowska
    \footnotemark[2]
    \thanks{
        Institute of Applied Mathematics, Gda\'nsk University of Technology, ul. Narutowicza 11/12, 80-233 Gda\'nsk, Poland
    }
}
\newcommand\abs[1]{\lvert#1\rvert}
\newcommand\modone[1]{{#1}_\mathbbm{1}}
\newcommand\bmodone[1]{(#1)_\mathbbm{1}}
\newcommand\diam[1]{\textup{diam}(#1)}
\newcommand\proj[2]{#1^{(#2)}}
\newcommand{\cA}{\mathcal{A}}
\newcommand{\cB}{\mathcal{B}}
\newcommand{\cL}{\mathcal{L}}
\newcommand{\cK}{\mathcal{K}}
\newcommand{\cP}{\mathcal{P}}
\newcommand{\cR}{\mathcal{R}}
\newcommand{\cS}{\mathcal{S}}
\newcommand{\cT}{\mathcal{T}}
\newcommand{\bbA}{\mathbb{A}}
\newcommand{\KK}{\mathbb{K}}
\newcommand{\NN}{\mathbb{N}}
\newcommand{\RR}{\mathbb{R}}
\newcommand{\sph}{\mathbb{S}}
\newcommand{\TT}{\mathbb{T}}
\newcommand{\ZZ}{\mathbb{Z}}
\newcommand{\ttA}{\mathtt{A}}
\newcommand{\ttH}{\mathtt{H}}
\newcommand{\tth}{\mathtt{h}}
\newcommand{\ttV}{\mathtt{V}}
\newcommand{\tts}{\mathtt{s}}
\newcommand{\ttp}{\mathtt{p}}
\newcommand\smetr{\mathbf{d}_\sph}
\DeclareMathOperator{\conjtest}{ConjTest}
\DeclareMathOperator{\haus}{d_{H}}
\DeclareMathOperator{\err}{err}
\DeclareMathOperator{\fnn}{FNN}
\DeclareMathOperator{\KNN}{KNN}
\DeclareMathOperator{\knn}{\kappa}
\DeclareMathOperator{\knno}{\overline{\kappa}}
\DeclareMathOperator{\emb}{\Pi}
\DeclareMathOperator{\id}{id}
\DeclareMathOperator{\inter}{int}
\DeclareMathOperator{\im}{im}
\DeclareMathOperator{\hdiff}{\tt diff}
\DeclareMathOperator{\hscore}{\tt score}
\begin{document}

\maketitle

\begin{abstract}
This paper considers a problem of testing, from a finite sample, a topological conjugacy of two trajectories coming from dynamical systems $(X,f)$ and $(Y,g)$. More precisely, given $x_1,\ldots, x_n \subset X$ and $y_1,\ldots,y_n \subset Y$ such that $x_{i+1} = f(x_i)$ and $y_{i+1} = g(y_i)$ as well as $h: X \rightarrow Y$, we deliver a number of tests to check if $f$ and $g$ are topologically conjugated via $h$. The values of the tests are close to zero for systems conjugate by $h$ and large for systems that are not. 
Convergence of the test values, in case when sample size goes to infinity, is established. 
We provide a number of numerical examples indicating scalability and robustness of the presented methods. 
In addition, we show how the presented method gives rise to a test of sufficient embedding dimension, mentioned in Takens' embedding theorem.
Our methods also apply to the situation when we are given two observables of deterministic processes, of a form of one or higher dimensional time-series. In this case, their similarity can be accessed by comparing the dynamics of their Takens' reconstructions. 
Finally, we include a proof-of-concept study using the presented methods to search for an approximation of the homeomorphism conjugating given systems.
\end{abstract}

\begin{keywords}
conjugacy, semiconjugacy, embedding, nonlinear time-series analysis, false nearest neighbors, similarity measures, k-nearest neighbors
\end{keywords}

\begin{MSCcodes}
Primary: 37M10, 37C15, Secondary: 65P99, 65Q306
\end{MSCcodes}

\section{Introduction}
Understanding sampled dynamics is of primal importance in multiple branches of science where there is a lack of solid theoretical models of the underlying phenomena \cite{doi:10.1073/pnas.1906995116,DeAngelis:2015te, Lejarza:2022vl, PhysRevLett.82.1144,Yuan:2019vx}. 
It delivers a foundation for various equation--free models of observed dynamics and allows to draw conclusions about the unknown observed processes. In the considered case we start with two, potentially different, phase spaces $X$ and $Y$ and a map $h : X \rightarrow Y$. Given two sampled trajectories, referred to in this paper by \emph{time series}, $x_1,\ldots,x_n \subset X$ and $y_1,\ldots,y_n \subset Y$ we assume that they are both generated by a continuous maps $f : X \rightarrow X$ and  $g : Y \rightarrow Y$\footnote{For finite samples those maps always exist assuming that $x_i = x_j$ and $y_i = y_j$ if and only if $i = j$.} in a way that $x_{i+1} = f(x_i)$ and $y_{i+1} = g(y_i)$.
In what follows, we build a number of tests that allow to distinguish trajectories that are conjugated by the given map $h$ from those that are not. 
 It should be noted that the problem of finding an appropriate $h$ for two conjugated dynamical system is in general very difficult and goes beyond the scope of this paper.
 However, in Section \ref{sec:in_a_search_for_h} and in Appendix \ref{apx:approx_h} we propose and validate a method of approximating map $h$ for one dimensional systems $f$ and $g$ utilizing one of the proposed statistics.
 
 The presented problem is practically important for the following reasons.
 Firstly, the proposed machinery allows to test for conjugacy, in case when the formulas that generate the underlying dynamics, as $f$ and $g$ above, are not known explicitly, and the input data are based on observations of the considered system. 

Secondly, some of the presented methods apply in the case when the dynamics $f$ and $g$ on $X$ and $Y$ is explicitly known, but we want to test if a given map $h: X \rightarrow Y$ between the phase spaces has a potential to be a topological conjugacy. It is important as the theoretical results on conjugacy are given only for a handful of systems and our methods give a tool for numerical hypothesis testing.

Thirdly, those methods can be used to estimate the optimal parameters of the dynamics reconstruction. A basic way to achieve such a reconstruction is via time delay embedding, a technique that depends on parameters including the \emph{embedding dimension} and the \emph{time lag} (or \emph{delay}). 
When the parameters of the method are appropriately set up and the assumptions of Takens' Embedding Theorem hold (see \cite{TakensOriginal,Broer2011}), then (generically) a reconstruction is obtained, meaning that for \emph{generic} dynamical system and \emph{generic} observable, the delay-coordinate map produces a \emph{conjugacy} (dynamical equivalence) between reconstructed dynamics and the original (unknown) dynamics (cut to the limit set of a given trajectory)\footnote{One should, though, be aware that the \emph{generic set} in the classical Takens' Embedding Theorem might be a set of a small measure. However, recent advances in probabilistic versions of Takens' Theorem (\cite{Baranski_2020}) assert that, under even milder assumptions, the delay-coordinate map provides injective (not necessary conjugacy) correspondence between the points of the original system in the subset of full measure and the points in the reconstructed space.}. However, without the prior knowledge of the underlying dynamics (e.g. dimensions of the attractor), the values of those parameters have to be determined experimentally from the data.
It is typically achieved by implicitly testing for a conjugacy of the time delay embeddings to spaces of constitutive dimensions. 
Specifically, it is assumed that the optimal dimension of reconstruction $d$ is achieved when there is no conjugacy of the reconstruction in dimension $d$ to the reconstruction in the dimension $d'$, where $d' < d$, while there is a conjugacy between reconstruction in dimension $d$ and reconstruction in dimension $d''$, where $d < d''$. 
Those conditions can be tested with methods presented in this paper.

The main contributions of this paper include:
\begin{itemize}
    \item We propose a generalization of the FNN (\emph{False Nearest Neighbor})  method~\cite{HeggerKantzFNN1999} so that it can be applied to test for topological conjugacy of time series\footnote{Classical FNN method was used only to estimate the embedding dimension in a dynamics reconstruction using time delay embedding.}.  
    Moreover, we present its further modification called $\KNN$ method.
    \item We propose two entirely new methods: $\conjtest$ and $\conjtest^+$.
        Instead of providing an almost binary answer to a question if two sampled dynamical systems are conjugate (which happens for the generalized FNN and the $\KNN$ method), their result is a continuous variable that can serve as a scale of similarity of two dynamics. 
        This property makes the two new methods appropriate for noisy data. 
    \item We present a number of benchmark experiments to test the presented methods. In particular we  analyze how different methods are robust for the type of testing (e.g. noise, determinism, alignment of a time series).
    \item Additionally, in one dimensional setting, 
    we propose a heuristic method for approximating the possible conjugating homeomorphism between two dynamical systems given by time series.
\end{itemize}

To the best of our knowledge there are no ``explicit'' methods available to test conjugacy of dynamical systems given by their finite sample in a form of time series as proposed in this paper. A number of methods exist to estimate the parameters of a time delay embedding. They include, among others, mutual information \cite{mutual}, autocorrelation and higher order correlations \cite{correlation}, a curvature-based approach  \cite{Bradley} or wavering product \cite{buzug} for selecting the time-lag, selecting of embedding dimension based on GP algorithm \cite{gp} or the above mentioned FNN algorithm, as well as some methods allowing to  choose the embedding dimension and the time lag simultaneously as, for example, C-C method based on correlation integral \cite{kim}, methods based on symbolic analysis and entropy \cite{garcia} or some rigorous statistical tests \cite{pecora}. However, the problem of topological conjugacy between the maps generating two given time series and finding the connecting homeomorphism which conjugates the two dynamical systems, due to its complexity, has been mainly approached using machine learning tools (see e.g. the recent work \cite{Bramburger} which for the unknown map $f$ and given time series generated by $f,$ employed deep neural network for discovering the simple map $g$ which could model the unknown dynamics $f$ together with the map $h$ conjugating $f$ and $g$). 
Some theoretical ideas on finding conjugating homeomorphism (or, in general, a \emph{commuter} between two maps)  are discussed later in Section~\ref{sec:in_a_search_for_h} together with related works.

Numerous methods providing some similarity measures between time series exist (see reviews \cite{KIANIMAJD201711005}). However, we claim that those classical methods are not suitable for the problem we tackle in this paper.
While those methods often look for an actual similarity of signals or correlation, we are more interested in the dynamical generators hiding behind the data. For instance, two time series sampled from the same chaotic system can be highly uncorrelated, yet we would like to recognize them as similar, because the dynamical system constituting them is the same.     Moreover, methods introduced in this work are applicable for time series embedded in any metric space, while most of the methods are restricted to $\RR$, some of them are still useful in $\RR^d$.

%
%

The paper consists of four parts: Section \ref{sec:preliminaries} introduces the basic concepts behind the proposed methods.
Section~\ref{sec:conjugacy_tests} presents four methods designed for data-driven evaluation of conjugacy of two dynamical systems.
Section~\ref{sec:experiments} explores the features of the proposed methods using a number of numerical experiments. 
Section \ref{sec:in_a_search_for_h} develops the method of estimating the possible conjugacy map $h: X\to Y$ for time series generated from dynamical systems $(X,f)$ and $(Y,g)$ in the case when  the phase spaces $X$ and $Y$ are intervals in $\mathbb{R}$.
Additional details of that procedure and proofs are contained in \ref{apx:approx_h}.
Lastly, in Section~\ref{sec:disscussion} we summarize most important observations and discuss their possible significance in real-world time series analysis.

Finally, it should be noted that in the continuous setting, topological conjugacy is very fragile; it may be destroyed by an infinitesimal change of parameters of the system once that causes bifurcation. However, two finite sample of the trajectories obtained from the system before and after bifurcation are very close and it would require much large change of parameters to detect problems with conjugacy. It is a consequence of the fact that the techniques proposed in this paper operates on finite data. Therefore, they can provide evidences that the proposed connecting homeomorphism is not a topological conjugacy of the two considered systems, but they will not allow to prove, in any rigorous sense, the conjugacy between them.

\section{Preliminaries}\label{sec:preliminaries}
\subsection{Topological conjugacy}
We start with a pair of metric spaces $X$ and $Y$ and a pair of dynamical systems: $\varphi:X\times\TT\rightarrow X$ and $\psi:Y\times\TT\rightarrow Y$, where $\TT\in\{\ZZ,\RR\}$.
Fixing $t_X, t_Y \in \TT$ define $f: X \ni x \rightarrow \varphi(x,t_X)$ and $g: Y\ni y \rightarrow \psi(y,t_Y)$. We say that $f$ and $g$ are \emph{topologically conjugate} if there exists a homeomorphism $h:X\rightarrow Y$ such that the diagram 
\begin{equation}\label{eq:conj_diag}
   \begin{tikzcd}
        X \arrow{r}{f} \arrow[swap]{d}{h} & X \arrow{d}{h} \\
        Y \arrow{r}{g} & Y
    \end{tikzcd} 
\end{equation}
commutes, i.e., $h\circ f = g\circ h$.
If the map $h:X \to Y$ is not a homeomorphism but a continuous surjection then we say that $g$ is \emph{topologically semiconjugate} to $f$. 

Let us consider as an example $X$ being a unit circle, and $f_{\alpha}$ a rotation of $X$ by an angle $\alpha$. In this case, two maps, $f_{\alpha}, f_{\beta}: X \rightarrow X$ are conjugate if and only if $\alpha = \beta$ or $\alpha = -\beta$. This known fact is verified in the benchmark test in Section~\ref{ssec:circle_rotation}.

In our work we will consider finite time series $\cA=\{x_i\}_{i=1}^{n}$ and $\cB=\{y_i\}_{i=1}^{n}$ so that $x_{i+1} = f^{i}(x_1)$ and $y_{i+1} = g^{i}(y_1)$ for $i \in \{1,2,\ldots,n-1\}$, $x_1 \in X$ and $y_1 \in Y$ and derive criteria to test (semi)topological conjugacy of $f$ and $g$ via $h$ based on those samples and the given possible (semi)conjugacy $h$.

In what follows, a Hausdorff distance between $A, B \subset X$ will be used. It is defined as
\[ \haus(A,B) = \max\{ \sup_{a \in A} d(a,B) , \sup_{b \in B} d(b,A) \} \]
where $d$ is metric in $X$ and $d(x,A):=\inf_{a\in A}d(x,a)$.

\subsection{Takens' Embedding Theorem}
Our work is related to the problem of reconstruction of dynamics from one dimensional time series. 
For a fixed map $f : X \rightarrow X$ and $x_1 \in X$ take a time series $\cA = \{x_i = f^{i-1}(x_1)\}_{i \geq 1}$ being a subset of an attractor $\Omega\subset X$ of the (box-counting) dimension $m$. 
Take $s:X\rightarrow\RR$, a generic measurement function of observable states of the system, and one dimensional time series $\cS=\{s(x_i)\}_{x_i \in \cA}$, associated to $\cA$ . 
The celebrated Takens' Embedding Theorem \cite{TakensOriginal} states that given $\cS$ it is possible to reconstruct the original system with delay vectors, for instance $(s(x_i), s(x_{i+1}), \ldots, s(x_{i+d-1}))$, for sufficiently large \emph{embedding dimension} $d\geq 2m+1$ (the bound is often not optimal). 
The Takens' theorem implies that, under certain generic assumptions,  an embedding of the attractor $\Omega$ into $\RR^d$ given by
\begin{equation}\label{eq:embedding}
    F_{s,f}: \Omega\ni x \mapsto \left(s(x), s(f(x)), \ldots, s(f^{d-1}(x))\right)\in \RR^d
\end{equation}
establishes a \emph{topological conjugacy} between the original system $(\Omega,f)$ and $(F_{s,f}(\Omega),\sigma)$ with the dynamics on $F_{s,f}(\Omega)\subset \mathbb{R}^d$ given by the shift $\sigma$ on the sequence space.
Hence, Takens' Embedding Theorem allows to reconstruct both the topology of the original attractor and the dynamics. 

The formula presented above is a special case of a reconstruction with a \emph{lag} $l$ given by
\begin{equation*}
    \emb(\cA, d, l) := \left\{
        (s(x_i), s(x_{i+l}), \ldots, s(x_{i+(d-1)l})) 
        \mid i\in\{1,2,\ldots,n-d\,l\}
    \right\}.
\end{equation*}
From the theoretical point of view, the Takens' theorem holds for an arbitrary lag. However in practice a proper choice of $l$ may strongly affect numerical reconstructions (see \cite[Chapter 3]{kantz_schreiber_2003}).

The precise statements, interpretations and conclusions of the mentioned theorems can be found in \cite{Broer2011, TakensOriginal, Embedology},  and references therein.

\subsection{Search for an optimal dimension for reconstruction}
In practice, the bound in Takens' theorem is often not sharp and an embedding dimension less than $2m+1$ is already sufficient to reconstruct the original dynamics (see \cite{Baranski_2020,Baranski_2022}). Moreover, for time series encountered in practice, the attractor's dimension $m$ is almost always unknown. 
To discover the sufficient dimension of reconstruction, the False Nearest Neighbor (FNN) method \cite{HeggerKantzFNN1999,Kennel1992}, a heuristic technique for estimating the optimal dimension using a finite time series, is typically used.
It is based on an idea to compare the embeddings of a time series into a couple  of consecutive dimensions and to check if the introduction of an additional $d+1$ dimension separates some points that were close in $d$-dimensional embedding.
Hence, it tests whether $d$-dimensional neighbors are (false) neighbors just because of the tightness of the 
$d$-dimensional space.
The dimension where the value of the test stabilizes and no more false neighbors can be detected is proclaimed to be the optimal embedding dimension.

\subsection{False Nearest Neighbor and beyond}
The False Nearest Neighbor method implicitly tests semiconjugacy of $d$ and $d+1$ dimensional Takens' embedding by checking if the neighborhood of $d$-embedded points are preserved in $d+1$ dimension.
This technique was an inspiration for stating a more general question: given two time series, can we test if they were generated from conjugate dynamical systems?
The positive answer could suggest that the two observed signals were actually generated by the same dynamics, but obtained by a different measurement function. In what follows, a number of tests inspired by these observations concerning False Nearest Neighbor method and Takens' Embedding Theorem, are presented.

\section{Conjugacy testing methods}
\label{sec:conjugacy_tests}

In this section we introduce a number of new methods for quantifying the dynamical similarity of two time series.
Before digging into them let us introduce some basic pieces of notation used throughout the section. From now on we assume that $X$ is a metric space. 
Let $\cA=\{x_i\}_{i=1}^n$ be a finite time series in space $X$.
For $k\in\mathbb{N}$, by $\knn(x, k, \cA)$ we denote the set of \emph{$k$-nearest neighbors} of a point $x\in X$ among points in $\cA$. 
Thus, the nearest neighbor of point $x$ can be denoted by $\knn(x,\cA):=\knn(x,1,\cA)$. 
If $x\in\cA$ then clearly $\knn(x,\cA) = \{x\}$.
Hence, it is handful to consider also $\knno(x,k,\cA) :=\knn(x, k, \cA\setminus\{x\})$ and $\knno(x,\cA) :=\knn(x,1, \cA\setminus\{x\})$\footnote{In case of non uniqueness, and arbitrary choice of a neighbor is made.}. 

\subsection{False Nearest Neighbor method}

The first proposed method is an extension of the already mentioned $\fnn$ technique for estimating the optimal embedding dimension of time series.
    The idea of the classical $\fnn$ method relies on counting  the number of so-called false nearest neighbors depending on the threshold parameter $r$. 
    This is based on the observation that if the two reconstructed points 
    \[\mathbf{s}^1_d:=(s(x_{k_1}), s(x_{k_1+l}), \ldots, s(x_{k_1+(d-1)l}))\] 
    and 
    \[\mathbf{s}^2_d:=(s(x_{k_2}), s(x_{k_2+l}), \ldots, s(x_{k_2+(d-1)l}))\] 
    are nearest neighbors in the $d$-dimensional embedding but the distance between their (d+1)-dimensional counterparts 
    \[\mathbf{s}^1_{d+1}:=(s(x_{k_1}), \ldots, s(x_{k_1+(d-1)l}), s(x_{k_1+dl}))\] 
    and 
    \[\mathbf{s}^2_{d+1}:=(s(x_{k_2}), \ldots, s(x_{k_2+(d-1)l}),s(x_{k_2+dl}))\] 
    in $(d+1)$-dimensional embedding differs too much, 
    then $\mathbf{s}^1_d$ and $\mathbf{s}^2_d$ were $d$-dimensional neighbors only due to folding of the space. In this case, we will refer to them as ``false nearest neighbors''. Precisely, the ordered pair $(\mathbf{s}^1_d, \mathbf{s}^2_d)$ of $d$-dimensional points is counted as false nearest neighbor, if the following conditions are satisfied: (I.) the point $\mathbf{s}^2_d$ is the closest point to $\mathbf{s}^1_d$ among all points in the $d$-dimensional embedding, (II.) the distance $\abs{\mathbf{s}^1_d -\mathbf{s}^2_d}$ between the points $\mathbf{s}^1_d$ and $\mathbf{s}^2_d$ is less than $\sigma/r$, where $\sigma$ is the standard deviation of $d$-dimensional points formed from delay-embedding of the time series and (III.) the ratio between the distance $\abs{\mathbf{s}^1_{d+1} -\mathbf{s}^2_{d+1}}$ of $d+1$-dimensional counterparts of these points, $\mathbf{s}^1_{d+1}$ and $\mathbf{s}^2_{d+1}$, and the distance $\abs{\mathbf{s}^1_d -\mathbf{s}^2_d}$ is greater than the threshold $r$.  
    The condition (III.) is motivated by the fact that  under continuous evolution, even if the original dynamics is chaotic, the position of two close points should not deviate too much in the nearest future (we assume that the system is deterministic, even if subjected to some noise, which is the main assumption of all the nonlinear analysis time series methods). On the other hand, the condition (II.) means that we consider only pairs of points which are originally not too far away since applying the condition (III.) to points which are already outliers in $d$ dimensions does not make sense. 
    Next, the statistic $\fnn(r)$ counts the relative number of such false nearest neighbors 
    i.e. after normalizing with respect to the number of all the ordered pairs of points which satisfy (I.) and (II.). For discussion and some examples  see e.g. \cite{kantz_schreiber_2003}. 

    We generalize the $\fnn$ method to operate in the case of two time series (not necessarily created in a time-delay reconstruction) as follows.
    Let $\cA=\{a_i\}_{i=1}^n\subset X$ and $\cB=\{b_i\}_{i=1}^n\subset Y$ be two time series of the same length.
    Let $\xi:\cA\rightarrow\cB$ be a bijection relating points with the same index, i.e., $\xi(a_i):=b_i$. 
    Then we define the directed $\fnn$ ratio between $\cA$ and $\cB$ as
    \begin{equation}\label{eq:fnn}
        \fnn(\cA, \cB; r) := 
        \frac{
            \sum_{i=1}^{n}
            \Theta\left(
                \frac{\mathbf{d}_Y(b_i,\xi(\knno(a_i,\cA)))}{\mathbf{d}_X(a_i,\knno(a_i,\cA))} - r
            \right)
            \Theta\left(
                \frac{\sigma}{r} - \mathbf{d}_X(a_i,\knno(a_i,\cA))
            \right)
        }{
            \sum_{i=1}^{n}
            \Theta\left(
                \frac{\sigma}{r} - \mathbf{d}_X(a_i,\knno(a_i,\cA))
            \right)
        }
    \end{equation}
    where $\mathbf{d}_X$ and $\mathbf{d}_Y$ denote the distance function respectively in $X$ and $Y$, $\sigma$ is the standard deviation of the data (i.e. the standard deviation of the elements of the sequence $\cA$), $r$ is the parameter of the method and $\Theta$ is the usual Heaviside step function, i.e. $\Theta(x)=1$ if $x>0$ and $0$ otherwise. Note that the distance $\mathbf{d}$ (i.e. $\mathbf{d}_X$ or  $\mathbf{d}_Y$) might be defined in various ways however, as elements of time-series are usually elements of $\RR^k$ (for some $k$), then $\mathbf{d}(x,y)$ is often simply the Euclidean norm $|x - y|$.

    In the original $\fnn$ procedure we compare embeddings of a $1$-dimensional time series $\cA$ into $d$- versus $(d+1)$-dimensional space for a sequence of values of $d$ and $r$.
    In particular, the following application of \eqref{eq:fnn}:
    \begin{equation}
        \fnn(\cA; r, d) := \fnn(\emb_{d}(\cA),\emb_{d+1}(\cA); r),
    \end{equation}
    coincides with the formula used in the standard form of $\fnn$ technique (compare with \cite{kantz_schreiber_2003}).
    For a fixed value of $d$, if the values of $\fnn$ decline rapidly with the increase of $r$, then we interpret that dimension $d$ is large enough not to introduce any artificial neighbors.
    The heuristic says that the lowest $d$ with that property is the optimal embedding dimension for time series $\mathcal{A}$.

\subsection{K-Nearest Neighbors}

    The key to the method presented in this section is an attempt to weaken and simplify the condition posed by FNN by considering a larger neighborhood of a point.
    As in the previous case, let $\cA=\{a_i\}_{i=1}^n$ and $\cB=\{b_i\}_{i=1}^n$ be two time series of the same length.
    Let $\xi:\cA\rightarrow\cB$ be a bijection defined $\xi(a_i):=b_i$. The proposed statistics, taking into account $k$ nearest neighbors of each point, is given by the following formula:
    \begin{equation}\label{eq:KNNeq}
        \KNN(\cA, \cB; k)
         := \frac{
            \sum_{i=1}^{n} 
            \min\left\{ 
                e\in \NN\ \mid\ \xi\left(\knno(a_i,k,\cA))\subseteq\knno(\xi(a_i), e+k, \cB\right)
            \right\}
            }{
                n^2
            },
    \end{equation}
    where $n$ is the length of time series $\cA$ and $\cB$.
    We refer to the above method as $\KNN$ distance. 
    The idea of the $\KNN$ method can be seen in the Figure~\ref{fig:idea_of_knn}.

    \begin{figure}
        \centering
        \includegraphics[width=0.75\textwidth]{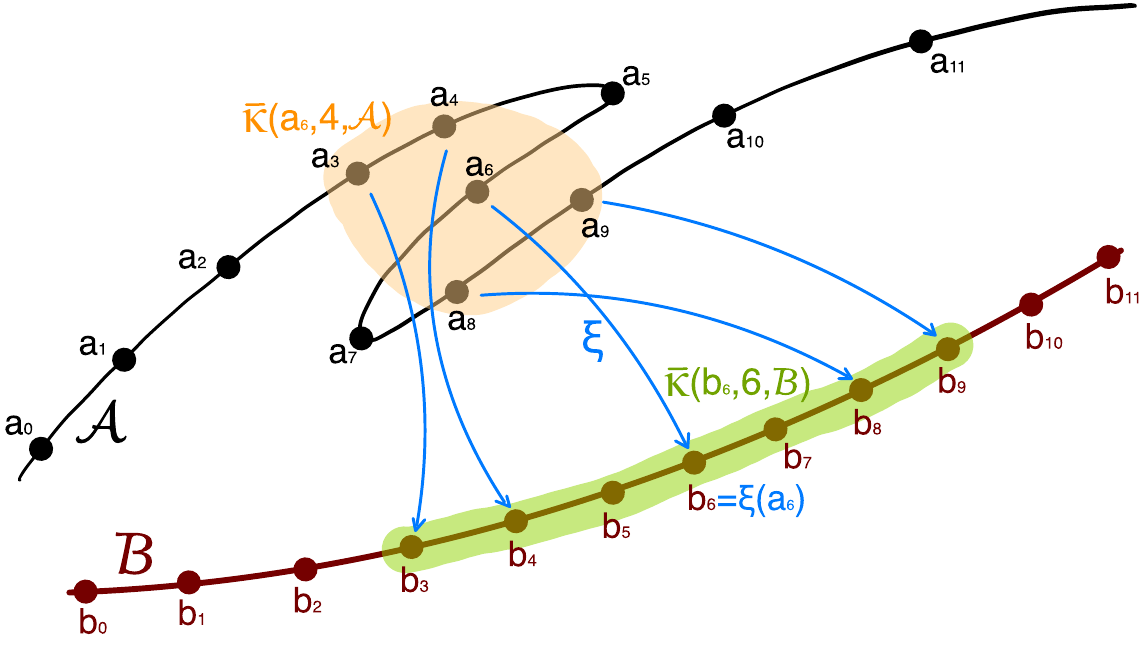}
        \caption{Top (continuous) black line represents trajectory from which $\cA$ is sampled (black dots). 
            Bottom (continuous) trajectory is sampled to obtain $\cB$ (burgundy dots). 
            Set $U:=\knno(a_6,4,\cA)$ highlighted with orange color, represents  4-nearest neighbors of $a_6 \in \cA$.
            The smallest $k$-neighborhood of $b_6$ that contains $\xi(U)$ is the one with $k=6$.
            The corresponding $\knno(b_6,4,\cB)$ is highlighted with green color.
            Hence, the contribution of point $a_6$ to the numerator of $\KNN(\cA,\cB,4)$ is $6-4=2$.}
        \label{fig:idea_of_knn}
    \end{figure}
    
    \begin{remark}
    In the above formula \eqref{eq:KNNeq}, for simplicity there is no counterpart of the parameters $r$ that was present in $\fnn$ which controlled the dispersion of data and outliers. This means that one should assume that the data (perhaps after some preprocessing) does not contain unexpected outliers. Alternatively, the formula might be easily modified to include such a parameter. 
    \end{remark}

    Set $\knno(a_i,k,\cA)$ can be interpreted as a discrete approximation of the neighborhood of $a_i$.
    Thus, for a point $a_i$ the formula measures how much larger neighborhood of the corresponding point $b_i=\xi(a_i)$ we need to take to contain the image of the chosen neighborhood of $a_i$.
    This discrepancy is expressed relatively to the size of the point cloud. 
    Next we compute the average of this relative discrepancy among all points. 
    Moreover, looking at the formula \eqref{eq:KNNeq} immediately reveals that in the numerator we sum up $n$ terms each of which takes values between $0$ and $n$ and it is not hard to give an example when all of these terms are $n$ actually (like a standard $n$-simplex). Therefore, as we want $\KNN$ to be upper-bounded by $1$, we put $n^2$ in the denominator of \eqref{eq:KNNeq} as the normalization factor. 
    
    Note that neither $f$ nor $g$ appear in the definitions of $\fnn$ and $\KNN$.
    Nevertheless, the dynamics is hidden in the indices. 
    That is,  $a_j\in\knno(a_i,k,\cA)$ means that $a_i$ returns to its own vicinity in $\abs{j-i}$ time steps.

\subsection{Conjugacy test}
   The third method tests the conjugacy of two time series by directly checking the commutativity of the diagram (\ref{eq:conj_diag}) which is tested in a more direct way compared to the methods presented so far.
   We no longer assume that both time series are of the same size, however, the method requires a \emph{connecting map} $h:X\rightarrow Y$, a candidate for a (semi)conjugating map.
   Unlike the map $\xi$ in $\fnn$ and $\KNN$ method map $h$ may transform a point $a_i\in\cA$ into a point in $Y$ that doesn't belong to $\cB$. 
   Nevertheless, the points in $\cB$ are crucial because they carry the information about the dynamics $g:Y\rightarrow Y$.
    Thus, in order to follow trajectories of points in $Y$ we introduce $\tilde{h}:\cA\rightarrow\cB$, a discrete approximation of $h$:
    \begin{equation}\label{eq:h_tilde}
        \tilde{h}(a_i) := \knn\left(h(a_i), \cB\right).
    \end{equation}
    
\begin{figure}
    \centering
    \includegraphics[width=.75\textwidth]{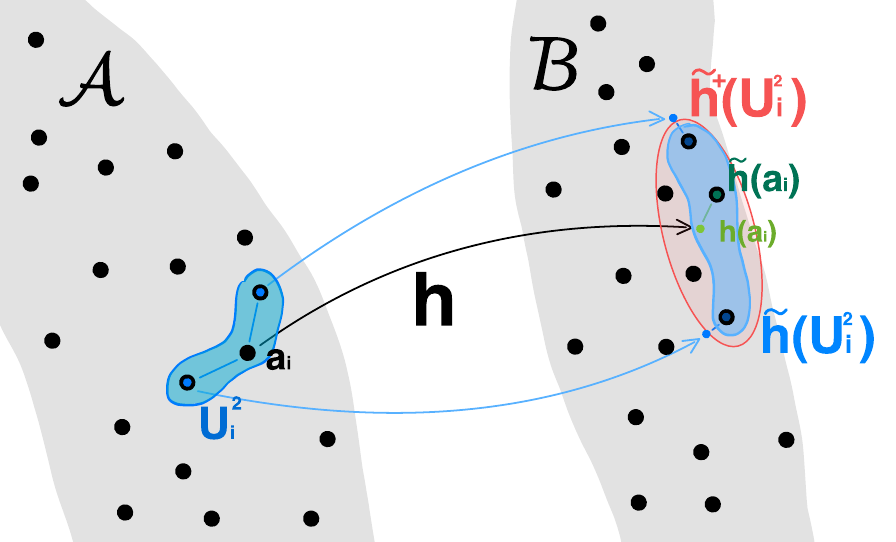}
    
    \caption{
        A pictorial visualization of a difference between $h$, $\tilde{h}$ and $\tilde{h}^+$.
        Map $h$ transforms a point $a\in\cA$ into a point $h(a)\in Y$.
        Map $\tilde{h}$ approximates the value of the map $h$ by finding the closest point in $\cB$ for $h(a)$.
        The discrete neighborhood $U^2_i\subset\cA$ of $a_i$ consists of three points and its image under $\tilde{h}$ has three points as well.
        However, $\tilde{h}^+(U^2_i)$ counts five elements, as there are points in $\cB$ closer to $\tilde{h}(a_i)$ then points in $\tilde{h}(U^2_i)$.
    }
    \label{fig:h_intuition}
\end{figure}

    The map $\tilde{h}$ simply assigns to $a_i$ the closest element(s) of $h(a_i)$ from the time series $\cB$.
    For a set $A\subset\cA$ we compute the value pointwise, i.e. 
        $\tilde{h}(A)=\{\tilde{h}(a)\mid a\in A\}$ (see Figure \ref{fig:h_intuition}).
    Note that it may happen that $\tilde{h}(A)$ has less elements than $A$.

    Denote the discrete $k$-approximation of the neighborhood of $a_i$ in $\cA$, namely the $k$ nearest neighbors of $a_i$, by $U_i^k:=\knn(a_i,k,\cA)\subset\cA$. 
    Then we define
    \begin{align}\label{eq:conjtest}
        \conjtest(\cA,\cB; k,t,h) :=
            \frac{
            \sum_{i=1}^{n} \haus\left(
                    (h\circ f^t)(U_i^k),\ 
                    (g^t\circ\tilde{h})(U_i^k)
                    \right)
            }{
                n\, \diam{\cB}
            },
    \end{align}
    where $\haus$ is the Hausdorff distance between two discrete sets and $\diam{\cB}$ is the diameter of the set $\cB$.
    The idea of the formula \eqref{eq:conjtest} is to test at every point $a_i\in\cA$ how two time series together with map $h$ are close to satisfy diagram \eqref{eq:conj_diag} defining topological conjugacy. 
    First, we approximate the neighborhood of $a_i\in\cA$ with $U_i^k$ and then we try to traverse the diagram in two possible ways.
    Thus, we end up with two sets in $Y$, that is  
                    $(h\circ f^t)(U_i^k)$ and $(g^t\circ\tilde{h})(U_i^k)$.
    We measure how those two sets diverge using the Hausdorff distance.

    The extended version of the test presented above considers a larger approximation of $\tilde{h}(U_i^k)$.
    To this end, find the smallest $k_i$ such that $\tilde{h}(U_i^k)\subset\knn(h(a_i), k_i, \cB)$.
    The corresponding superset defines the enriched approximation (see Figure \ref{fig:h_intuition}): 
    \begin{equation}\label{eq:h_tilde_plus}
        \tilde{h}^+(U_i^k):=\knn(h(a_i), k_i, \cB).
    \end{equation}
    We use it to define a modified version of \eqref{eq:conjtest}.
    
    \begin{equation}\label{eq:conjtest_plus}
        \conjtest^+(\cA,\cB; k,t,h) :=
            \frac{
            \sum_{i=1}^{n} \haus\left(
                    (h\circ f^t)(U_i^k),\ 
                    g^t \left( \tilde{h}^+(U_i^k) \right)
                    \right)
            }{
                n\, \diam{\cB}
            }.
    \end{equation}

    The extension of $\conjtest$ to $\conjtest^+$ was motivated by results of Experiment $\hyperref[sssec:exp_4a]{\text{4A}}$ described in Subsection \ref{sec:lorenz_experiment}.
    The experiment should clarify the purpose of making the method more complex.

    We refer collectively to $\conjtest$ and $\conjtest^+$ as $\conjtest$ methods.
    
The forthcoming results provide mathematical justification of our method, i.e. ``large'' and non-decreasing values of the above tests suggest that there is no conjugacy between two time-series. 
\begin{theorem}\label{thm:ConjTestSemi}
    Let $f:X\rightarrow X$ and $g:Y\rightarrow Y$, where $X\subset\RR^{d_X}$ and $Y\subset\RR^{d_Y}$, be continuous maps ($d_X$ and $d_Y$ denote dimensions of the spaces). For $y_1\in Y$ 
    define  
        $\cB_m:=\left\{b_i:=g^{i-1}(y_1)\mid i\in\{1,\ldots,m\}\right\}$.
    
  Suppose that $Y$ is compact and that the trajectory of $y_1$ is dense in $Y$, i.e. the set $\cB_m$ becomes dense in $Y$ as $m\to \infty$. If $g$ is semiconjugate to $f$ with $h$ as a semiconjugacy map, then for every fixed $n$, $t$ and $k$
\begin{equation}
        \lim_{m\to\infty} \conjtest(\cA_n, \cB_m; k, t, h) = 0, \label{eq:ConjTestConv1}
    \end{equation}
    where $\cA_n:=\left\{a_i:=f^{i-1}(x_1)\mid i\in\{1,\ldots,n\}\right\}$, $x_1\in X$, is any time-series in $X$ of a length $n$. 

    Moreover, the convergence is uniform with respect to $n$ and with respect to the choice of the starting point $x_1$ (i.e. the ``rate'' of convergence does not depend on the time-series $\cA_n$).  
\end{theorem}

\begin{proof}
Since $g$ is semiconjugate to $f$ via $h$, $h:X\rightarrow Y$ is a continuous surjection such that for every $t\in \mathbb{N}$ we have $h\circ f^t=g^t \circ h$.  Fix $t\in\mathbb{N}$ and $k\in\mathbb{N}$ and let $\varepsilon>0$. 
We will show that there exists $M$ such that for all $m>M$, all $n\in\mathbb{N}$ and every finite time-series $\cA_n:=\left\{a_i:=f^{i-1}(x_1)\mid i\in\{1,\ldots,n\}\right\}\subset X$ of length $n$ (where $x_1\in X$ is some point in $X$) it holds that
\begin{equation}
    \conjtest(\cA_n, \cB_m; k, t, h)<\varepsilon.\label{eq:conjpom}
\end{equation}

Note that $|b_2-b_1|\leq |\cB_m|$ for any $m\geq2$ (with $|\cB_m|$ denoting cardinality of the set $\cB_m$), which we will use at the end of the proof. As $g$ is continuous and $Y$ is compact, there exists $\delta$ such that $\vert g^t(y)-g^t(\tilde{y})\vert <\varepsilon\, \vert b_2-b_1\vert$ for every $y, \ \tilde{y}\in Y$ with $\vert y-\tilde{y}\vert<\delta$. 
As $\cB=\{y_1, g(y_1), \ldots, g^m(y_1), \ldots \}=\{b_1, \ldots, b_m, \ldots \}$ is dense in $Y$, there exists $M$ such that if $m>M$ then for every $n\in \mathbb{N}$, every $x_1\in X$ and every $i\in\{1,2, \ldots, \}$ there exists $j_m(i)\in \{1,2, \ldots, m\}$ such that 
\[
\vert b_{j_m(i)}- h(a_i)\vert <\delta,
\]
where $a_i=f^{i-1}(x_1)\in \cA_n$.

Thus for $m>M$, we always (independently of the point $a_i\in X$) have 
\[
\vert h(f^t(a_i))-g^t(\tilde{h}(a_i))\vert = \vert g^t(h(a_i)) - g^t(\tilde{h}(a_i))\vert<\varepsilon \,\vert b_2-b_1\vert
\]
as $g^t(h(a_i))=h(f^t(a_i))$ and $\vert \tilde{h}(a_i) - h(a_i)\vert <\delta$. Consequently,
\[
\haus\left(
                    (h\circ f^t)(U_i^k),\ 
                    (g^t\circ\tilde{h})(U_i^k)
                    \right)<\varepsilon\, \vert b_2-b_1\vert,
\]
where $U_i^k=\knn(a_i,k,\cA_n)$ and $\tilde{h}(U_i^k)=\{\knn(h(a_j),\cB_m) \mid a_j\in U^i_k\}$. Therefore
\[
\frac{
            \sum_{i=1}^{n} \haus\left(
                    (h\circ f^t)(U_i^k),\ 
                    (g^t\circ\tilde{h})(U_i^k)
                    \right)
            }{
                n\, \diam{\cB_m}
            }<\frac{n \, \varepsilon\,  \vert b_2-b_1\vert}{n\, \diam{\cB_m}}\leq \varepsilon
\]
since $\abs{b_2-b_1}\leq \diam{\cB_m}$ for every $m\geq 1$. This proves \eqref{eq:conjpom}.
\end{proof}
The compactness of $Y$ and the density of the set $\cB=\{y_1, g(y_1), \ldots, g^m(y_1), \ldots \}$ in $Y$ is needed to obtain the uniform convergence in \eqref{eq:ConjTestConv1} but,  as follows from the proof above, these assumptions can be relaxed at the cost of possible losing the uniformity of the convergence:  
\begin{corollary}
Let $f:X\rightarrow X$ and $g:Y\rightarrow Y$, where $X\subset\RR^{d_X}$ and $Y\subset\RR^{d_Y}$, be continuous maps.  
    Let $x_1\in X$ and $y_1\in Y$. Define $\cA_n:=\left\{a_i:=f^{i-1}(x_1)\mid i\in\{1,\ldots,n\}\right\}$ and 
$\cB_m:=\left\{b_i:=g^{i-1}(y_1)\mid i\in\{1,\ldots,m\}\right\}$.
    Suppose that $\{h(a_1), \ldots h(a_n)\}\subset \hat{Y}$ for some compact set $\hat{Y}\subset Y$ such that the set $\hat{Y}\cap \cB$ is dense in $\hat{Y}$, where $\cB=\left\{b_1, \ldots, b_m, \ldots \right\}$.  
    
    If $g$ is \textbf{semiconjugate} to $f$ with $h$ as a semiconjugacy, then for every $t$ and $k$
\[
        \lim_{m\to\infty} \conjtest(\cA_n, \cB_m; k, t, h) = 0. 
    \]
\end{corollary}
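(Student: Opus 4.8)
The plan is to follow the structure of the proof of Theorem~\ref{thm:ConjTestSemi}, replacing the two ingredients that relied on global compactness and global density by localized analogues adapted to the \emph{fixed, finite} collection of points $h(a_1),\ldots,h(a_n)$. Fix $n$, $t$, $k$ and let $\varepsilon>0$. Since $g$ is semi-conjugate to $f$ via $h$, we again have $h\circ f^t=g^t\circ h$, so the key per-point identity $h(f^t(a_j))=g^t(h(a_j))$ remains available. Write $c:=\abs{b_2-b_1}>0$ (the trajectory being nonconstant); as in the theorem this constant is compared with $\diam{\cB_m}$ only at the very end.

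First I would exploit that $n$ is fixed, so that $\tilde h$ only ever acts on the finitely many points $a_1,\ldots,a_n$ and hence the only values of $g^t$ that matter are those near $h(a_1),\ldots,h(a_n)$. For each $j\in\{1,\ldots,n\}$ the map $g^t:Y\to Y$ is continuous at $h(a_j)$, so there is $\delta_j>0$ with $\abs{g^t(y)-g^t(h(a_j))}<\varepsilon\,c$ whenever $y\in Y$ and $\abs{y-h(a_j)}<\delta_j$; set $\delta:=\min_j\delta_j>0$. Here lies the one genuine subtlety, and the step I expect to be the main obstacle: the approximation $\tilde h(a_j)=\knn(h(a_j),\cB_m)$ need not lie in $\hat Y$, so I cannot invoke continuity on $\hat Y$ directly. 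This turns out to be harmless, because $\tilde h(a_j)\in\cB_m\subset Y$ always, and continuity of $g^t$ at $h(a_j)$ is a statement about \emph{all} nearby points of $Y$, not only those in $\hat Y$. What I genuinely need is that $\tilde h(a_j)$ becomes close to $h(a_j)$, and this is exactly where the hypothesis that $\hat Y\cap\cB$ is dense in $\hat Y$ enters: since $h(a_j)\in\hat Y$, for each $j$ there is an index with $b_{l(j)}\in\hat Y\cap\cB$ and $\abs{b_{l(j)}-h(a_j)}<\delta$. Taking $M:=\max_j l(j)$ (finite, as there are only $n$ points), for every $m\ge M$ the nearest neighbour $\tilde h(a_j)$ is at least as close, so $\abs{\tilde h(a_j)-h(a_j)}<\delta$ for all $j$.

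Finally I would assemble the Hausdorff estimate exactly as in the theorem. For $m\ge M$ and every $j$,
\[
\abs{h(f^t(a_j))-g^t(\tilde h(a_j))}=\abs{g^t(h(a_j))-g^t(\tilde h(a_j))}<\varepsilon\,c,
\]
using the semi-conjugacy identity together with the choice of $\delta$. Because the two sets $(h\circ f^t)(U_i^k)$ and $(g^t\circ\tilde h)(U_i^k)$ are indexed by the same points $a_j\in U_i^k$, this common-index pairing bounds \emph{both} directions of the Hausdorff distance by $\varepsilon\,c$, regardless of any collapsing of points caused by $\tilde h$; hence $\haus\big((h\circ f^t)(U_i^k),(g^t\circ\tilde h)(U_i^k)\big)<\varepsilon\,c$ for each $i$. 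Summing over $i$ and normalizing gives
\[
\conjtest(\cA_n,\cB_m;k,t,h)<\frac{n\,\varepsilon\,c}{n\,\diam{\cB_m}}\le\varepsilon,
\]
since $c=\abs{b_2-b_1}\le\diam{\cB_m}$ for $m\ge2$. As $\varepsilon>0$ was arbitrary, this yields the claimed limit. I would close by noting that $\delta$ and $M$ depend on the particular points $h(a_1),\ldots,h(a_n)$, hence on $\cA_n$ and $n$; this dependence is precisely what is lost relative to the theorem and explains why only pointwise, rather than uniform, convergence is asserted.
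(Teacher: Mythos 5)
Your proof is correct and takes essentially the same route the paper intends: the corollary is presented there as a direct relaxation of Theorem~\ref{thm:ConjTestSemi}, and your argument is precisely that theorem's proof localized to the finitely many points $h(a_1),\ldots,h(a_n)$ --- pointwise continuity of $g^t$ replacing uniform continuity on a compact $Y$, and density of $\hat{Y}\cap\cB$ in $\hat{Y}$ replacing global density, with exactly the loss of uniformity in $n$ and $x_1$ that the paper notes. Your explicit observation that $\tilde{h}(a_j)$ need not lie in $\hat{Y}$ but only in $Y$ (so pointwise continuity of $g^t$ at $h(a_j)$ still applies) is a detail the paper leaves implicit, and it is handled correctly.
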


\begin{remark}
In the above corollary the assumption on the existence of the set $\hat{Y}$ means just that the trajectory of the point $y_1$ contains points $g^{j_i}(y_1)$ which, respectively, ``well-approximate'' points $h(a_i)$, $i=1, 2, \ldots, n$.  

Note also that we do not need the compactness of the space $X$ nor the density of $\cA=\{a_1, a_2, \ldots, a_n,\ldots \}$ in $X$. 
\end{remark}

The following statement is an easy consequence of the statements above
\begin{theorem}\label{thm:ConjTest2}
Let $X\subset\RR^{d_X}$ and $Y\subset\RR^{d_Y}$ be compact sets and $f:X\rightarrow X$ and $g:Y\rightarrow Y$ be continuous maps which are 
\textbf{conjugate} by a homeomorphism $h:X\rightarrow Y$. 
    Let $x_1\in X$, $y_1\in Y$ and $\cA_n$ and  
$\cB_m$ be defined as before. Suppose that $\cA_n$ and $\cB_m$ are dense, respectively, in $X$  and $Y$ as $n\to \infty$ and $m\to \infty$.  
    Then for every $t$ and $k$
\[
        \lim_{m\to\infty} \conjtest(\cA_n, \cB_m; k, t, h) = \lim_{n\to\infty} \conjtest(\cB_m, \cA_m; k, t, h)= 0. 
    \]
\end{theorem}
The assumptions on the compactness of the spaces and density of the trajectories can be slightly relaxed in the similar vein as before.

The above results concern $\conjtest$. Note that in case of $\conjtest^+$ the neighborhoods $\tilde{h}^+(U_i^k)$, thus also $(g^t\circ\tilde{h}^+)(U_i^k)$, can be significantly enlarged by adding additional points to $\tilde{h}(U_i^k)$ and thus increasing the Hausdorff distance between corresponding sets. In order to still control this distance and formally prove desired convergence additional assumptions concerning space $X$ and the sequence $\cA$ are needed:
\begin{theorem}\label{thm:ConjTestPlus}
Let $f:X\to X$ and $g:Y\to Y$, where $X\subset \RR^{d_X}$ and $Y\subset \RR^{d_Y}$ be continuous functions. 
For $x_1\in X$ and $n\in\mathbb{N}$ define $\cA_n:=\left\{a_i:=f^{i-1}(x_1)\mid i\in\{1, 2, \ldots,n\}\right\}$. Similarly, for $y_1\in Y$ and $m\in \mathbb{N}$ define $\cB_m:=\{b_i:=g^{i-1}(y_1)\mid i\in\{1, 2, \ldots,m\}\}$.
Assume that $X$ and $Y$ are compact and that the set $\cA_n$ becomes dense in $X$ as $n\to\infty$, and $\cB_m$ becomes dense in $Y$ as $m\to\infty$.
Under those assumptions, if $g$ is semiconjugate to $f$ with $h:X\to Y$ as a semiconjugacy we have that
\begin{equation}\label{eq:convforplus}
\lim_{n\to\infty}\lim_{m\to\infty} \conjtest^{+}(\cA_n,\cB_m;k,t,h)=0
\end{equation}
for any $k\in\mathbb{N}$ and $t\in \mathbb{N}$.
\end{theorem}

\begin{proof}
Since $g$ is semiconjugate to $f$ via $h$, for every $t\in\mathbb{N}$ we have $h\circ f^t=g^t\circ h$, where $h:X\to Y$ is a continuous surjection.
Expanding \eqref{eq:convforplus} yields 
\begin{equation} \label{eq:twosums}
\begin{split}
& \lim_{n\to\infty}\lim_{m\to\infty} \frac{\sum_{i=1}^{n} \haus\left(
                    (h\circ f^t)(U_i^k),\ 
                    (g^t\circ\tilde{h}^{+})(U_i^k)
                    \right)}{n\ \diam{\cB_m}}\leq\\
 & \lim_{n\to\infty}\lim_{m\to\infty}\frac{\sum_{i=1}^{n} \haus\left(
                    (h\circ f^t)(U_i^k),\ 
                    (g^t\circ\tilde{h})(U_i^k)
                    \right)}{n\ \diam{\cB_m}}+\\ 
                    &  \lim_{n\to\infty}\lim_{m\to\infty}\frac{\sum_{i=1}^{n} \haus\left(
                    (h\circ f^t)(U_i^k),\ 
                    (g^t(\tilde{h}^+ (U_i^k)\setminus \tilde{h}(U_i^k)))
                    \right)}{n\ \diam{\cB_m}}.
\end{split}
\end{equation}

Recall that $U_{i}^k:=\knn(a_i,k,\cA_n)$, $\tilde{h}(a_i):=\knn(h(a_i),\cB_m)$, $\tilde{h}(U_i^k):=\{\tilde{h}(a_j): \ a_j\in U_i^k\}$ and $\tilde{h}^+(U_i^k):=\knn(h(a_i),k_i,\cB_m)$, where $k_i$ is the smallest integer $k_i$ such that $\tilde{h}(U_i^k)\subset \knn(h(a_i),k_i,\cB_m)$. Thus in particular, $\tilde{h}(U_i^k)\subset \tilde{h}^+(U_i^k)$. Obviously all these neighborhoods   $U_i^k$, $\tilde{h}(U_i^k)$ and   $\tilde{h}^+(U_i^k)$  depend on $n$ and $m$ (since they are taken with respect to $\cA_n$ and $\cB_m$). 

Note that from Theorem \ref{thm:ConjTestSemi} already follows that the first of the two terms in the sum in \eqref{eq:twosums} vanishes. Thus we will only show that the second double limit vanishes as well. 

Let $\varepsilon>0$, $k\in \NN$ and $t\in \mathbb{N}$. Since $g^t: Y\to Y$ is a continuous function on a compact metric space $Y$, there exists $\delta$ such that $\vert g^t(x)-g^t(y)\vert<\frac{\varepsilon}{2}$ whenever $x,y\in Y$ are such that $\vert x-y\vert<\delta$. Similarly, since $X$ is compact and $h:X\to Y$ is continuous, there exists $\delta_1$ such that $\vert h(x)-h(y)\vert<\frac{\delta}{2}$ whenever $x,y \in X$ such that $\vert x-y\vert<\delta_1$. 

Since $\cB$ is dense in $Y$, there exists $M\in\mathbb{N}$ such that for $m>M$ and every $y\in Y$, there exists $\tilde{b}\in \cB_m$ such that $\vert \tilde{b}-y\vert <\frac{\delta}{4}$. Moreover, from the density of $\cA$, there exists $N\in\mathbb{N}$ such that for every $n>N$ and every $i\in \{1, 2, \ldots, n\}$ we have $\diam{U_i^k}<\delta_1$, i.e. if $a_j\in U^k_i=\knn(a_i,k,\cA_n)$ then $\vert a_j-a_i\vert <\delta_1$ and consequently 
\begin{equation}\label{eq:gwiazdka1}
\vert g^t(h(a_j))-g^t(h(a_i))\vert <\frac{\varepsilon}{2}.
\end{equation}
Assume thus $n>N$. Then for $m>M$ and every $i\in\{1, 2, \ldots n\}$ we have $\diam{U^k_i}<\delta_1$ which also implies $\diam{h(U^k_i)}<\frac{\delta}{2}$. As $m>M$, every point of $h(U_i^k)$ can be approximated by some point of $\cB_m$  with the accuracy better than $\frac{\delta}{4}$. Consequently, $\diam{\tilde{h}(U_i^k)}<\delta$ for every $i\in \{1, 2, \ldots, n\}$.

Suppose that $\tilde{b}\in \tilde{h}^+(U_i^k)\setminus \tilde{h}(U_i^k)$ for some $\tilde{b}\in \cB_m$. Then, by definition of $\tilde{h}^+$, 
\begin{equation}\label{eq:gwiazdka2}
\vert \tilde{b} -h(a_i)\vert \leq \diam{\tilde{h}(U_i^k)}<\delta .
\end{equation}
Thus for any $a_j\in U^k_i=\knn(a_i,k,\cA_n)$ and any $\tilde{b}\in \tilde{h}^+(U_i^k)\setminus \tilde{h}(U_i^k)$ we obtain
\begin{equation*} 
\begin{split}
    &\vert h(f^t(a_j)) - g^t(\tilde{b})\vert \\ 
    &\hspace{1.cm}     \leq  \vert h(f^t(a_j)-g^t(h(a_j))\vert + \vert g^t(h(a_j)) - g^t(h(a_i))\vert + \vert g^t(h(a_i)) -g^t(\tilde{b}) \vert  
\end{split}
\end{equation*}
where
\begin{itemize}
\item $\vert h(f^t(a_j)-g^t(h(a_j))\vert =0$ by semiconjugacy assumption
\item $\vert g^t(h(a_j)) - g^t(h(a_i))\vert <\frac{\varepsilon}{2}$ as follows from \eqref{eq:gwiazdka1}
\item $\vert g^t(h(a_i)) -g^t(\tilde{b}) \vert <\frac{\varepsilon}{2}$ as follows from \eqref{eq:gwiazdka2}.
\end{itemize}

Finally for every $i\in\{1, 2, \ldots n\}$, every $a_j\in U^k_i$ and every $\tilde{b}\in (\tilde{h}^+(U_i^k)\setminus \tilde{h}(U_i^k))$ we have
$\vert h(f^t(a_j))-g^t(\tilde{b})\vert<\varepsilon$ meaning that
\[
 \frac{\sum_{i=1}^{n} \haus\left(
                    (h\circ f^t)(U_i^k),\ 
                    g^t(\tilde{h}^+ (U_i^k)\setminus \tilde{h}(U_i^k))
                    \right)}{n\ \diam{\cB_m}}<\frac{\varepsilon}{\diam{\cB_m}}
\]
if only $n>N$ and $m>M$. 

This shows that the value of $\conjtest^+(\cA_n,\cB_m;k,t,h)$ can be arbitrarily small if $n$ and $m$ are sufficiently large and ends the proof.
\end{proof}

Finally, let us mention that conjugacy tests described in this Section are not tests in the statistical sense. They should be rather considered as methods of assessing dynamical similarity of the two unknown systems when only small finite samples of their trajectories are available. Trajectories related by topological conjugacy will give values of the tests close to $0$, and those coming from not conjugate systems are expected to result with higher values of the tests. 

The discussed task is already, to a certain extent, considered in the literature. The paper~\cite{pecora1995} develops sets of statistics which are intended to characterize, in terms of probabilities and confidence levels, whether time delay embeddings of the two time series are connected by a continuous, injected or differentiable map. 
The work~\cite{pecora1995} presents method to assess (generalized) synchronization of time series, coupling in complex population dynamics (see~\cite{LMoniz_nichols2005}) or detecting damage in some material structures (see~\cite{moniz2004}). 
The statistics proposed in those papers are inspired by notions of continuity, differentiability etc., typically involving quantities like $\epsilon$'s and $\delta$'s. These values need to be fixed and enforce the user to understand how $\delta$'s scale with $\epsilon$ which is typically hard. It seems to be possible to adopt $\conjtest$'s methods to the framework of statistical tests and will be 
considered in the future.

\section{Conjugacy experiments}\label{sec:experiments}

In this section the behavior of the described methods is experimentally studied. For that purpose a benchmark set of a number of time series originating from (non-)conjugate dynamical systems is generated. 
A time series of length $N$ generated by a map $f:X\rightarrow X$ with a starting point $x_1\in X$ is denoted by
\begin{equation*}
    \varrho(f,x_1,N):=\left\{f^{j-1}(x_1)\in X\mid j\in\{1, 2, \ldots,N\}\right\}.
\end{equation*}

All the experiments were computed in Python using floating number precision. The implementations of the methods presented in this paper as well as the notebooks recreating the presented experiments are available at \href{https://github.com/dioscuri-tda/conjtest}{https://github.com/dioscuri-tda/conjtest}.

\subsection{Irrational rotation on a circle}\label{ssec:circle_rotation}
    The first example involves a dynamics generated by rotation on a circle by an irrational angle.
    Let us define a 1-dimensional circle as a quotient space $\sph:=\RR/\ZZ$.
    Denote the operation of taking a decimal part of a number (modulo 1) by $\modone{x} := x-\lfloor x\rfloor$.
    Then, for a parameter $\phi\in[0,1)$ we define a rigid rotation on a circle, $f_{[\phi]}:\sph\rightarrow\sph$, as 
    \begin{equation*}
        f_{[\phi]}(x) := 
            \bmodone{x+\phi}.
    \end{equation*}

    We consider the following metric on $\sph$ 
    \begin{equation}\label{eq:sphere_metric}
        \mathbf{d}_\sph:\sph\times\sph\ni (x,y) \mapsto \min\left(\bmodone{x-y}, \bmodone{y-x}\right)\in [0,1).
    \end{equation}
    
In this case $\mathbf{d}_\sph(x,y)$ can be interpreted as the length of the shorter arc joining points $x$ and $y$ on $\sph$.
    It is known that two rigid rotations, $f_{[\phi]}$ and $f_{[\psi]}$, are topologically conjugate if and only if $\phi=\psi$ or  $\phi+\psi=1$ (see e.g. Theorem 2.4.3 and Corollary 2.4.1 in \cite{szlenk}). 
    In the first case the conjugating circle homeomorphism $h$ preserves the orientation i.e. the lift $H:\mathbb{R} \to \mathbb{R}$ of $h: \sph \to \sph $ satisfies $H(x+1)=H(x)+1$ for every $x\in \mathbb{R}$  
    and in the second case $h$ reverses the orientation $H(x+1)=H(x)-1$ and the two rotations $f_{[\phi]}$ and $f_{[\psi]}$ are just mutually inverse. 

    Moreover, for a map $f_{[\phi]}$ we introduce a family of topologically conjugate maps given by
    \begin{equation*}
        f_{[\phi], s}(x):=\Bigl(\bmodone{\modone{x}^s + \phi}\Bigr)^{1/s}, \ x\in\RR
    \end{equation*}
    with $s>0$.
    In particular, $f_{[\phi]}=f_{[\phi],1}$. It is easy to check that by putting $h_{s}(x):=\modone{x}^s$ we get 
    $f_{[\phi],s} = h_{s}^{-1}\circ f_{[\phi]}\circ h_s$.

\subsubsection{Experiment 1A}\label{sssec:exp_1a}
    \paragraph{Setup}
    Let $\alpha=\frac{\sqrt{2}}{10}$. 
    In the first experiment we compare the following time series:
    \begin{align*}
        \cR_1&=     \varrho(f_{[\alpha]}, 0.0, 2000),\qquad
        &\cR_2&=    \varrho(f_{[\alpha]},    0.25, 2000),\\
        \cR_3&=     \varrho(f_{[\alpha+0.02]},   0.0, 2000),\qquad
        &\cR_4&=    \varrho(f_{[2\alpha]},    0.0, 2000),\\
        \cR_5&=     \varrho(f_{[\alpha], 2}, 0.0, 2000), \qquad  
        &\cR_6&=     \cR_5 + \err(0.05),  
    \end{align*}
where $\err(\epsilon)$ denotes a uniform noise sampled from the interval $[-\epsilon, \epsilon]$.

As follows from Poincar\'{e} Classification Theorem, $f_{[\alpha]}$ and $f_{[2\alpha]}$ are not conjugate nor semiconjugate whereas $f_{[\alpha]}$ and $f_{[\alpha],2}$ are conjugate via $h_2$. 
Thus the expectation is to confirm conjugacy of $\cR_1$ and $\cR_2$ and of $\cR_1$ and $\cR_5$ and indicate deviations from conjugacy in all the remaining cases.   

In case of $\conjtest$ the comparison $\cR_1$ versus $\cR_2$, $\cR_1$ versus $\cR_3$ and $\cR_1$ versus $\cR_4$ was done with $h\equiv\id_{\sph}$. As we already mentioned, there is no conjugacy between $\cR_1$ and $\cR_3$, nor between $\cR_1$ and $\cR_4$, as the angles of these rotations are different. Thus there is no true connecting homeomorphism between $\cR_1$ and $\cR_3$ and between $\cR_1$ and $\cR_4$. However, in order to apply $\conjtest$s  we need to pick some candidate for a matching map between two point clouds and as the first choice one can always start with the identity map, especially for comparing point clouds generated by trajectories starting at the same or close initial points. 
Therefore in this experiment we use $h\equiv\id_{\sph}$ for comparing  $\cR_1$ both with $\cR_3$ and $\cR_4$. 
When comparing $\cR_1$ versus $\cR_4$ and $\cR_1$ versus $\cR_5$ we use homeomorphism $h_2(x):=\modone{x}^2$.
Let us recall that for $\fnn$ and $\KNN$ methods we always use $h(x_i)=y_i$, a connecting homeomorphism based on the indices correspondence.

\begin{table}
\centering
\begin{footnotesize}
\begin{tabular}{|c|c|c|c|c|c|} \hline
\backslashbox{method}{test}
            & \thead{$\cR_1$ vs. $\cR_2$\\ (starting point\\ perturbation)} 
            & \thead{$\cR_1$ vs. $\cR_3$\\ (angle\\ perturbation)} 
            & \thead{$\cR_1$ vs. $\cR_4$\\ (angle\\ doubled)} 
            & \thead{$\cR_1$ vs. $\cR_5$\\ (nonlinear\\ rotation)}
            & \thead{$\cR_1$ vs. $\cR_6$\\ (noise)} \\ \hline
$\fnn$ (r=2) &
    \slashbox{0.0}{0.0} &
    \slashbox{1.0}{1.0} &
    \slashbox{.393}{.393} & 
    \slashbox{.063}{0.0} & 
    \slashbox{.987}{.986}  \\\hline 
$\KNN$ (k=5) & 
    \slashbox{0.0}{0.0} &
    \slashbox{.257}{.756} &
    \slashbox{.003}{.997} &
    \slashbox{0.0}{0.0} &
    \slashbox{.150}{.152}  \\\hline
\thead{$\conjtest$\\(k=5, t=5)}  & 
    \slashbox{.001}{.001} &
    \slashbox{.201}{.201} &
    \slashbox{.586}{.586} &
    \slashbox{0.0}{0.0} &
    \slashbox{.142}{.181}  \\\hline
\thead{$\conjtest^+$\\(k=5, t=5)}  & 
    \slashbox{.001}{.001} &
    \slashbox{.201}{.201} &
    \slashbox{.586}{.586} &
    \slashbox{0.0}{0.0} &
    \slashbox{.162}{.181}  \\\hline
\end{tabular}
\end{footnotesize}
\caption{
    Comparison of conjugacy measures for time series generated by the rotation on a circle. 
    The number in the upper left part of the cell corresponds to a comparison of the first time series vs. the second one, while the lower right corresponds to the inverse comparison. 
    As follows from formulas at the beginning of Section \ref{sssec:exp_1a} the considered trajectories have length $N=2000$, other corresponding  parameters are stated in the table. 
}
\label{tab:experiment_rotation}
\end{table}

\paragraph{Results}
The results are presented in Table \ref{tab:experiment_rotation}. 
Since the presented methods are not symmetric, order of input time series matters. To accommodate this information, every cell contains two values, above and below the diagonal.
For the column with header "$\cR_i$ vs. $\cR_j$", the cells upper value corresponds to the outcome of $\fnn(\cR_i, \cR_j;r)$,  
    $\KNN(\cR_i, \cR_j;k)$, $\conjtest(\cR_i, \cR_j;k,t,h)$ and $\conjtest^+(\cR_i, \cR_j;k,t,h)$, respectively to the row.
The lower values corresponds to $\fnn(\cR_j, \cR_i;r),\ \ \KNN(\cR_j, \cR_i;k),\ \ \conjtest(\cR_j, \cR_i;k,t,h)$ and \newline$\conjtest^+(\cR_j, \cR_i;k,t,h)$, respectively.

As we can see from Table \ref{tab:experiment_rotation} the starting point does not affect results of methods ($\cR_1$ vs. $\cR_2$) since all the values in the first column are close to $0$. It is expected due to the symmetry of the considered system.
A nonlinearity introduced in time series $\cR_5$ also does not affect the results. Despite the fact that $f_{[\alpha],2}$ is nonlinear, it is conjugate to the rotation $f_{[\alpha]}$ which is reflected by tests' values.   
However, when we change the rotation parameter we can see an increase of measured values ($\cR_1$ vs. $\cR_3$ and $\cR_1$ vs. $\cR_4$).
It is particularly visible in the case of $\fnn$ and $\KNN$.
Interestingly, a small perturbation of the angle ($\cR_3$) can cause a bigger change in a value then a large one ($\cR_4$).
We investigate how the perturbation of the rotation parameter affects values of examined methods in Experiment $\hyperref[sssec:exp_1b]{\text{1B}}$.
Moreover, the last column ($\cR_1$ vs. $\cR_6$) shows that $\fnn$ is very sensitive to  noise, while $\KNN$ and $\conjtest$ methods present some robustness. 
The influence of noise on the value of the test statistics is further studied in Experiment $\hyperref[sssec:exp_1c]{\text{1C}}$. 

Note also that additional summary comments concerning Table \ref{tab:experiment_rotation}, as well as results of other forthcoming experiments, will be also presented at the end of the article. 

\subsubsection{Experiment 1B}\label{sssec:exp_1b}
In this experiment we test how the difference of the system parameter affects tested methods.
\paragraph{Setup}
Let $\alpha:=\frac{\sqrt{2}}{10}\approx 0.141$.
We consider a family of time series parameterized by $\beta$.
\begin{equation}\label{eq:exp_rot_ts_family}
    \left\{\cR_\beta:=\varrho(f_{[\beta]}, 0.0, 2000)\mid\beta=\alpha + \frac{i \alpha}{100},\ i\in[-50,-49,\ldots, 125]\right\}.
\end{equation}
Thus, the tested interval of values of $\beta$ is approximately $[0.07, 0.32]$.
As a reference value we chose $\alpha=\frac{\sqrt{2}}{10}\approx 0.141$.
We denote the corresponding time series as $\cR_\alpha$.
We compare all time series from the family \eqref{eq:exp_rot_ts_family} with $\cR_\alpha$.
In the case of $\conjtest$ methods we use $h=\id$. 

\begin{figure}
    \centering
    \includegraphics[width=0.49\textwidth]{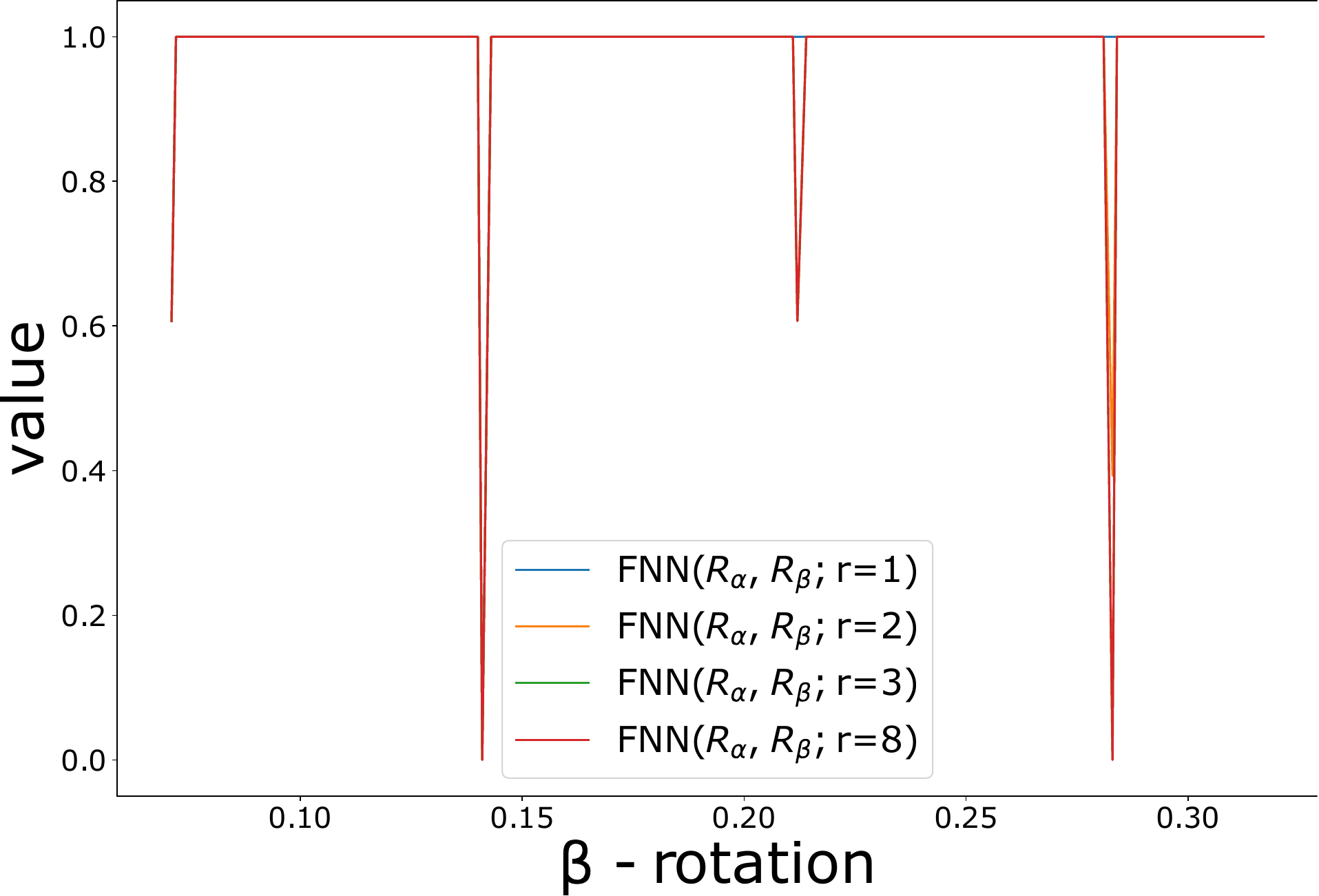}
    \includegraphics[width=0.49\textwidth]{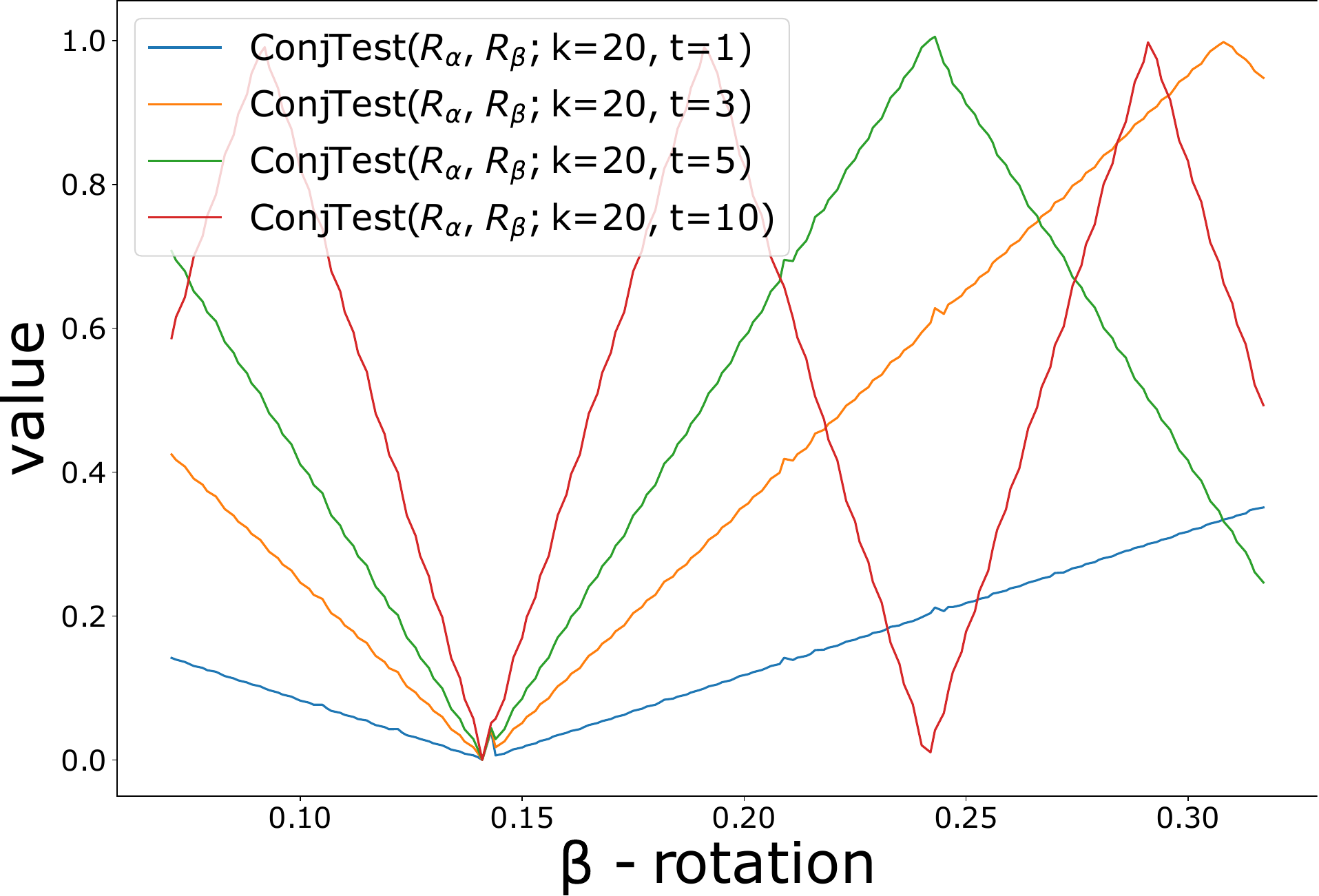}
    
    \includegraphics[width=0.8\textwidth]{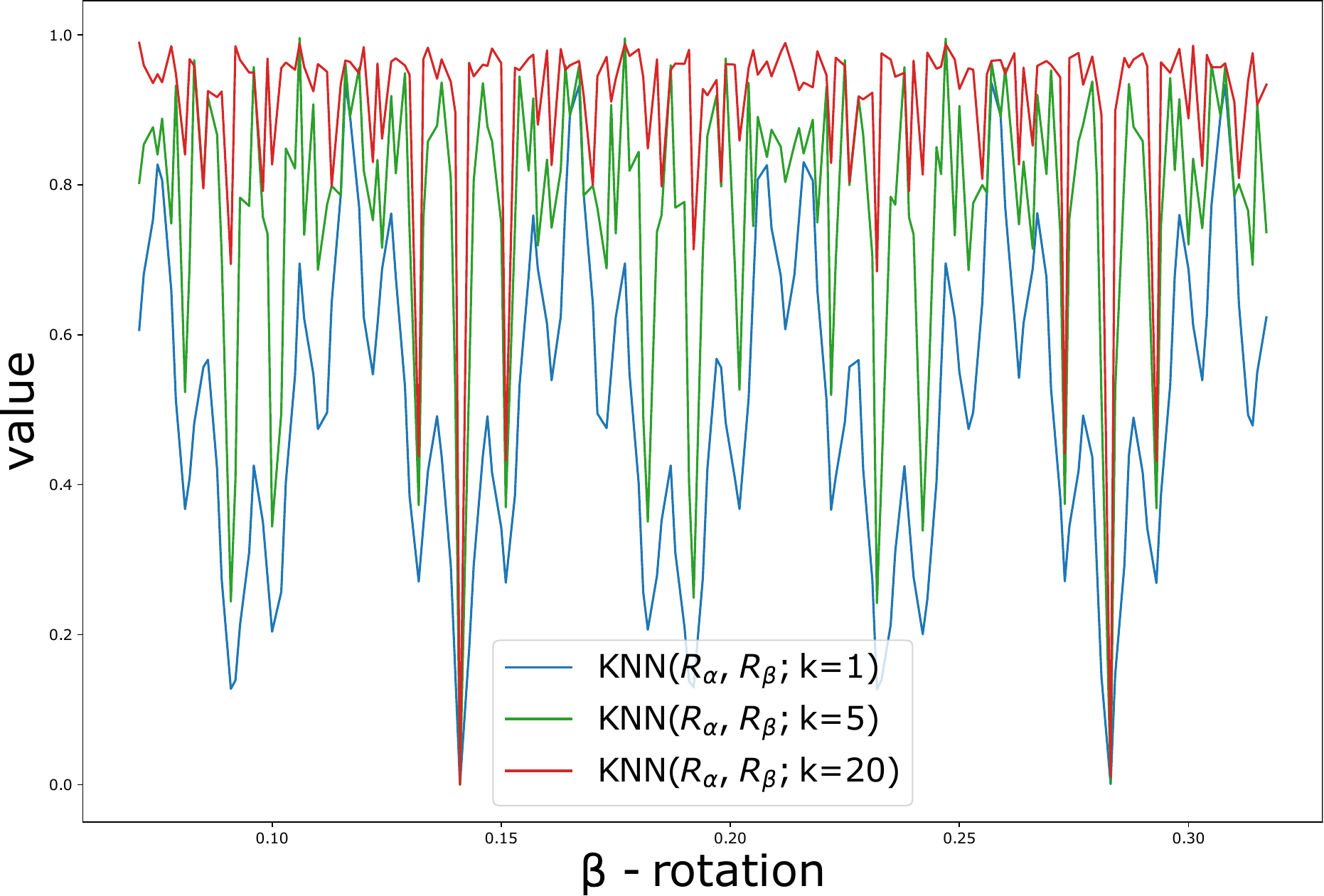}
    
    \caption{Dependence of the conjugacy measures on the perturbation of rotation angle. 
    Top left: $\fnn$ method.
    Top right: $\conjtest$ method.
    Bottom: $\KNN$ method.
    }
    \label{fig:experiment_rotation_angle}
\end{figure}
\paragraph{Results}
The outcome of the experiment is plotted in Figure \ref{fig:experiment_rotation_angle}.
We can see that all methods give values close to $0$ when comparing $\cR_\alpha$ with itself.
For different values of parameter $r$ of $\fnn$ plots (Figure \ref{fig:experiment_rotation_angle} top left looks almost identically. 
Even a small perturbation of the rotation parameter causes an immediate jump of $\fnn$ value from 0 to 1, making it extremely sensitive to any changes in the system. Obviously, unless $\beta=\alpha$, $\cR_{\alpha}$ and $\cR_{\beta}$ are not conjugate. However, sometimes it might be convenient to have a somehow smoother relation of the test value to the infinitesimal change of the rotation angle.
$\KNN$ method seems to behave inconsistently, but we can see that the higher parameter $k$ gets the closer we get to a shape resembling the curve obtained with $\fnn$.
On the other hand, $\conjtest$ shows a linear dependence on $\beta$ parameter. 
Moreover, different values of $\conjtest$'s parameter $t$ result in a different slope of this dependency.

Both, $\fnn$ and $\KNN$ exhibit an interesting drop of the value when $\beta\approx 0.283$ that is $\beta =2\alpha$. 
Formally, we know that $f_{[\alpha]}$ and $f_{[2\alpha]}$ are not conjugate systems.
However, we can explain this outcome by analyzing the methods.
Let $a_i\in\cR_\alpha$ and let $\tau\in\ZZ$ such that $a_j:=a_{i+\tau}\in\cR_\alpha$ be the nearest neighbor of $a_i$.
In particular, $\tau\in\ZZ$.
By \eqref{eq:sphere_metric} we get $\smetr(a_i, a_j)= \bmodone{\alpha\tau}$ or $\bmodone{-\alpha\tau}$.
There is an $N\in\ZZ$ and a $\delta\in[0,1)$ such that $\alpha\tau= N + \delta$.
Since $\smetr(a_i, a_j)\approx 0$, it follows that $\modone{\delta}\approx 0$.
To get $\fnn$ we also need to know $\smetr(b_i, b_j)$.
Let $\beta=z\alpha$.
Then, $b_i=\bmodone{z\alpha i}$, $b_j=\bmodone{z\alpha i + z\alpha\tau}$ and $\smetr(b_i, b_j)=\bmodone{z\alpha\tau}$ or $\bmodone{-z\alpha\tau}$.
Thus, $z\alpha\tau = zN + z\delta$.
We assume that $z\delta\in[0,1)$, because $\modone{\delta}\approx 0$ and $z$ is not very large .
Again, there exists an $M\in\ZZ$ and $\epsilon\in[0,1)$ such that $zN=M+\epsilon$.
Now, if $zN\in\ZZ$, then $\epsilon=0$, $\smetr(b_i,b_j)=\bmodone{z\delta} = z\,\smetr(a_i,a_j)$ (last equality given by $\modone{\delta}\approx 0$) and $\frac{\smetr(b_i,b_j)}{\smetr(a_i,a_j)}=z$.
If $zN\not\in\ZZ$ then $\epsilon\neq 0$ and $\frac{\smetr(b_i,b_j)}{\smetr(a_i,a_j)}=\frac{>0}{\sim 0}$.
Hence, the fraction gives a large number and the numerator of $\fnn$ will count most of the points, unless $zN\in\ZZ$ which is always satisfied when $z\in\ZZ$.
Moreover, for the irrational rotation $\tau$ might be large.
In our experiments we usually get $\abs{\tau}>1000$.
Thus, $N$ is large and $\epsilon$ is basically a random number.
In the case of $\KNN$ there is a chance that at least for some of the $k$-nearest neighbors $zN\in\ZZ$.
Hence, the more rugged shape of the curve.

In the case of $\conjtest$ we observe a clear impact of $\conjtest$'s parameter $t$ on the shape of the curve.
The method takes $k$-nearest neighbors of a point $x_i$ ($U_i^k$ in the formula \eqref{eq:conj_diag}) and moves them $t$ times about angle $\alpha$.
At the same time the corresponding image of those points in the system $\cR_\beta$ ($\tilde{h}(U_i^k)$ in the formula \eqref{eq:conj_diag}) is rotated $t$ times about $\beta$ angle.
Thus, the discrepancy of the position of those two sets of points is proportional to $t\beta$.
In particular, when $\bmodone{t\beta}=\alpha$, these two sets are in the same position.

\begin{figure}
    \centering
    \includegraphics[width=0.8\textwidth]{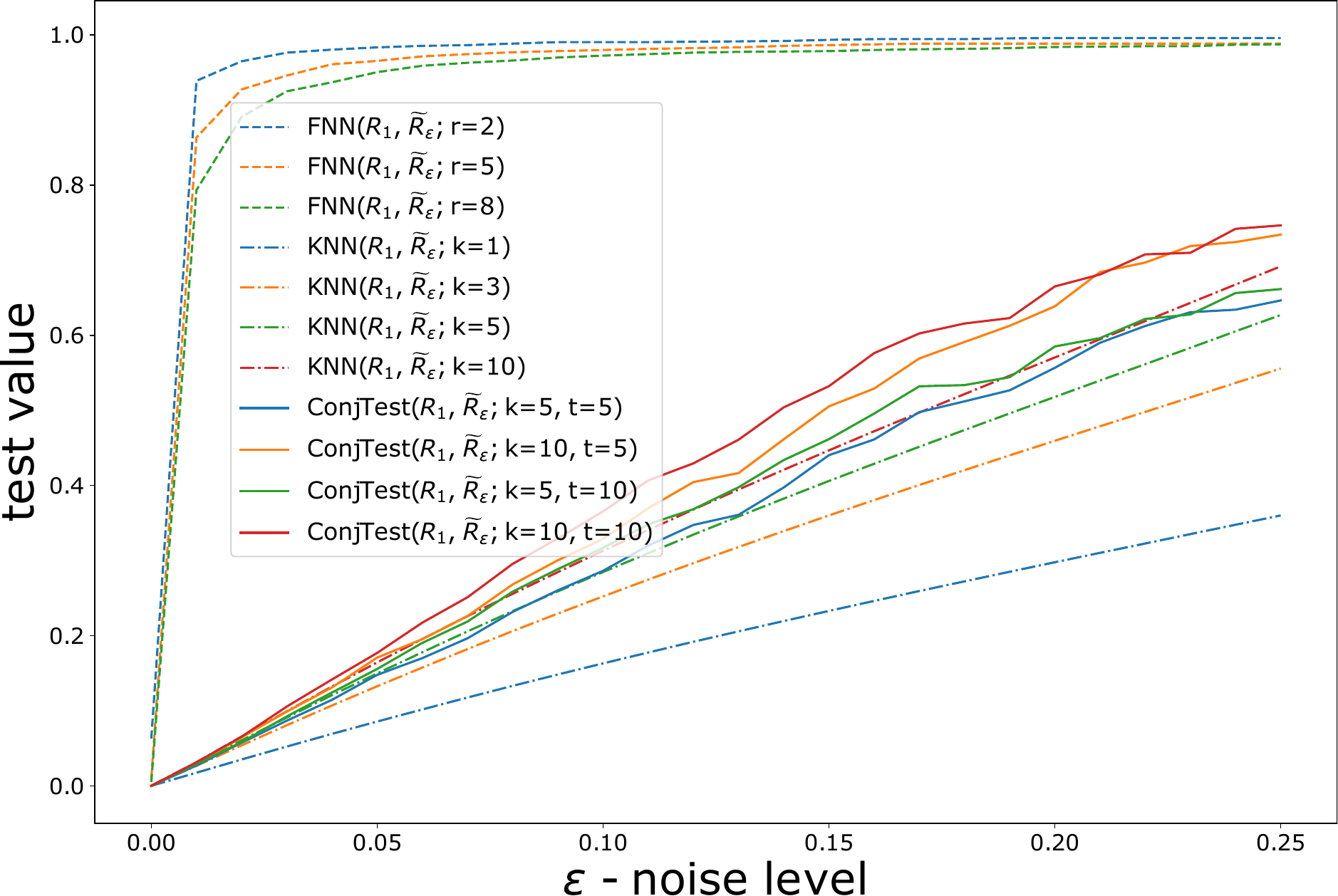}
    \caption{Dependence of conjugacy measures on the perturbation of time series.
    }
    \label{fig:experiment_noise_angle_dmax}
\end{figure}

\subsubsection{Experiment 1C}\label{sssec:exp_1c}
In this experiment, instead of perturbing the parameter of the system we perturb the time series itself by applying a noise to every point of the series.
\paragraph{Setup}
Set $\alpha=\frac{\sqrt{2}}{10}$.
We compare a time series $\cR_1:=\varrho(f_{[\alpha]},0.0,2000)$ with a family of time series:
\begin{equation}\label{eq:exp_noise_ts_family}
    \left\{\widetilde{\cR}_\epsilon:=\varrho(f_{[\alpha],2}, 0.0, 2000)+\err(\epsilon)\mid\epsilon\in[0.00, 0.25]\right\},
\end{equation}
where $\err(\epsilon)$ is a uniform noise sampled from the interval $[-\epsilon, \epsilon]$ applied to every point of the time series.
In the case of $\conjtest$ we again use $h(x)=\modone{x^2}$.

\paragraph{Results}
Results are presented in Figure \ref{fig:experiment_noise_angle_dmax}.
Again, $\fnn$ presents a very high sensitivity on any disruption of a time series and even a small amount of noise gives a conclusion that two systems are not conjugate.
On the other hand, $\KNN$ and $\conjtest$ present an almost linear dependence on noise level.
Note that higher values of parameters $k$ and $t$ make methods more sensitive to the noise.

\subsection{Example: irrational rotation on a torus}\label{ssec:torus_rotation}
Let us consider a simple extension of the previous rotation example to a rotation on a torus.
With a torus defined as $\TT:=\sph\times\sph$, where $\sph=\RR/\ZZ$, we can 
introduce map $f_{[\phi_1, \phi_2]}:\TT\rightarrow\TT$ defined as
\begin{equation*}
    f_{[\phi_1,\phi_2]}(x^{(1)},x^{(2)}) = ( \bmodone{x^{(1)}+\phi_1}, \bmodone{x^{(2)}+\phi_2}),
\end{equation*}
where $\phi_1,\phi_2\in [0, 1)$.
We equip the space with the maximum metric $\textbf{d}_\TT$:
\[
    \mathbf{d}_\TT:\TT\times\TT\ni ((x_1, y_1), (x_2, y_2)) \mapsto 
        \max\left(
            \mathbf{d}_\sph(x_1,x_2),
            \mathbf{d}_\sph(y_1,y_2)
            \right)\in [0,1),
\]
where $\mathbf{d}_\sph$ is the sphere metric (see \eqref{eq:sphere_metric}).

Note that rotation on a torus described above and rotation on a circle $f_{[\phi_i]}:\sph\rightarrow\sph$ studied in Section \ref{ssec:circle_rotation} give a simple example of semiconjugate systems. Namely, let $h:\TT\rightarrow\sph$ be a projection $h_i(x^{(1)}, x^{(2)})=x^{(i)}$, $i=1,2$.
Then we get the equality $h_i\circ f_{[\phi_1,\phi_2]}=f_{[\phi_i]}\circ h_i$ for $i\in\{1,2\}$.

\subsubsection{Experiment 2A}\label{sssec:exp_2a}
\paragraph{Setup}
For this experiment we consider the following time series:
\begin{align*}
    \cT_1&=     \varrho(f_{[\alpha,\beta]}, (0.0, 0.0), 2000),\qquad
    &\cS_1&=     \cT_1^{(1)},\\
    \cT_2&=     \varrho(f_{[1.1\alpha,\beta]}, (0.1, 0.0), 2000),\qquad
    &\cS_2&=     \cT_2^{(1)},\\
    \cT_3&=     \varrho(f_{[\beta,\beta]}, (0.1, 0.0), 2000),\qquad
    &\cS_3&=     \cT_3^{(1)},
\end{align*}
where $\alpha=\sqrt{2}/10$, $\beta=\sqrt{3}/10$, and $\cS_i=\cT_i^{(1)}$, $i=1,2,3$, is a time series obtained from the projection of the elements of $\cT_i$ onto the first coordinate.
When comparing $\cT_i$ with $\cT_j$ for $i,j\in\{1,2,3\}$ we use $h\equiv\id$.
When we compare $\cT_i$ versus $\cS_j$ we use $h(x, y)=x$, and for $\cS_i$ versus $\cT_j$ we get $h(x)=(x, 0)$.

\begin{table}
\centering
\begin{footnotesize}
\begin{tabular}{|c|c|c|c|c|c|} \hline
\backslashbox{method}{test}
            & \thead{$\cT_1$ vs. $\cS_1$\\} 
            & \thead{$\cT_1$ vs. $\cT_2$\\} 
            & \thead{$\cT_1$ vs. $\cS_2$\\} 
            & \thead{$\cT_1$ vs. $\cT_3$\\}
            & \thead{$\cT_1$ vs. $\cS_3$\\} \\ \hline
$\fnn$ (r=2) &
    \slashbox{0.0}{1.0} & \slashbox{1.0}{.978} & \slashbox{1.0}{1.0} & \slashbox{0.0}{1.0} & \slashbox{0.0}{1.0}  \\\hline
$\KNN$ (k=5) & 
    \slashbox{.042}{.617} & 
    \slashbox{.275}{.855} & 
    \slashbox{.514}{.690} & 
    \slashbox{.041}{.938} & 
    \slashbox{.041}{.938}  \\\hline
\thead{$\conjtest$\\(k=5, t=5)}  & 
    \slashbox{.001}{.270} &
    \slashbox{.149}{.148} & 
    \slashbox{.142}{.270} & 
    \slashbox{.451}{.322} & 
    \slashbox{.318}{.322}  \\\hline
\thead{$\conjtest^+$\\(k=5, t=5)}  & 
    \slashbox{.018}{.272} & 
    \slashbox{.154}{.158} & 
    \slashbox{.143}{.271} & 
    \slashbox{.458}{.325} & 
    \slashbox{.319}{.324}  \\\hline
\end{tabular}
\end{footnotesize}
\caption{
    Comparison of conjugacy measures for time series generated by the rotation on a torus. 
    The number in the upper left part of the cell corresponds to a comparison of the first time series vs. the second one, while the lower right number corresponds to the inverse comparison.
}
\label{tab:experiment_torus}
\end{table}

\paragraph{Results}
The asymmetry of results in the first column ($\cT_1$ vs. $\cS_1$) in Table~\ref{tab:experiment_torus} shows that all methods detect a semiconjugacy between $\cT_1$ and $\cS_1$, i.e. that $f_{[\alpha]}$ is semiconjugate to $f_{[\alpha,\beta]}$ via $h_1$.
An embedding of a torus into a 1-sphere preserves a neighborhood of a point.
The inverse map clearly does not exist.

The rest of the results confirm conclusions from the previous experiment.
The second and the third column ($\cT_1$ vs. $\cT_2$ and $\cT_1$ vs. $\cS_2$)
    show that $\fnn$ and $\KNN$ are sensitive to a perturbation of the system parameters.
The fourth and the fifth column ($\cT_1$ vs. $\cT_3$ and $\cT_1$ vs. $\cS_3$)
    present another example where those two methods produce a false positive answer suggesting a semiconjugacy.  
    This time the problematic case is not due to a doubling of the rotation parameter, but because of coinciding rotation angles.
Again, the behavior of the $\conjtest$ method exhibits a response that is relative to the level of perturbation. 

\subsection{Example: the logistic map and the tent map}\label{ssec:log_tent}
Our next experiment examines two broadly studied chaotic maps defined on a real line.
The logistic map and the tent map, $f_l,g_\mu:[0,1]\rightarrow[0,1]$, respectively defined as:
\begin{equation}
    f_l(x) := l x(1-x) 
    \qquad\text{and}\qquad
    g_\mu(x):=\mu\min\{x,\, 1-x\},
\end{equation}
where, typically, $l\in[0, 4]$ and $\mu\in[0,2]$.
For parameters $l=4$ and $\mu=2$ the systems are conjugate via homeomorphism:
\begin{equation}\label{eq:log_tent_homoemorphism}
    h(x):=\frac{2\arcsin(\sqrt{x})}{\pi},
\end{equation}
that is, $h\circ f_4=g_2\circ h$.
In this example we use the standard metric induced from $\RR$.

\subsubsection{Experiment 3A}\label{sssec:exp_3a}
\paragraph{Setup}
In the initial experiment for those systems we compare the following time series:
\begin{align*}
    \cA&=\varrho(f_4, 0.2, 2000), 
    &\cB_2&=\varrho(f_{4}, 0.21, 2000),\\
    \cB_1&=\varrho(g_2, h(0.2), 2000),
    &\cB_3&=\varrho(f_{3.99}, 0.2, 2000), \\ 
    & &\cB_4&=\varrho(f_{3.99}, 0.21, 2000).  
\end{align*}

Time series $\cA$ is conjugate to $\cB_1$ through the homeomorphism $h$.
Time series $\cA$ and $\cB_2$ come from the same system -- $f_4$, but are generated using different starting points.
Sequences $\cB_3$ and $\cB_4$ are both generated by the logistic map but with different parameter value ($l={3.99}$) than $\cA$; thus, they are not conjugate with $\cA$.
For $\conjtest$ methods we use \eqref{eq:log_tent_homoemorphism} to compare $\cA$ with $\cB_1$, and the identity map to compare $\cA$ with $\cB_2$, $\cB_3$ and $\cB_4$.

\paragraph{Results}
The first column of Table \ref{tab:experiment_log_tent} shows that all methods properly identify the tent map as a system conjugate to the logistic map (provided that the two time series are generated by dynamically corresponding points, i.e. $a_1$ and $b_1:=h(a_1)$, respectively).
The second column demonstrates that $\fnn$ and $\KNN$ get confused by a perturbation of the starting point generating time series.
This effect was not present in the circle and the torus example (Sections \ref{ssec:circle_rotation} and \ref{ssec:torus_rotation}) due to a full symmetry in those examples.
The $\conjtest$ methods are only weakly affected by the perturbation of the starting point. 
Nevertheless, we expect that higher values of parameter $t$ may significantly affect the outcome of $\conjtest$ due to the chaotic nature of the map.  
We test it further in the context of Lorenz attractor (Experiment $\hyperref[sssec:exp_4c]{4C}$).
The third and the fourth column reflect high sensitivity of $\fnn$ and $\KNN$ to the parameter of the system.
On the other hand, $\conjtest$ methods admit rather conservative response to a change of the parameter.

The experiment shows that $\fnn$ and $\KNN$ are able to detect a change caused by a perturbation of a system immediately.
However, in the context of empirical data we may not be able to determine whether the starting point was perturbed, or if the system has actually changed, or whether there was a noise in our measurements.
Thus, some robustness with respect to noise might be desirable and the seemingly blurred concept of the conjugacy represented by $\conjtest$ might be helpful.

\begin{table}
\begin{center}
\begin{tabular}{|c|c|c|c|c|} \hline
\backslashbox{method}{test}
        & $\cA$ vs. $\cB_1$ 
            & \thead{$\cA$ vs. $\cB_2$\\ (starting point \\perturbation)} 
            & \thead{$\cA$ vs. $\cB_3$\\ (parameter\\ perturbation)} 
            & \thead{$\cA$ vs. $\cB_4$\\ (st.point + param.\\ perturbation)}\\\hline
$\fnn$ (r=2) &
    \slashbox{.205}{0.0} &
    \slashbox{.998}{1.0} &
    \slashbox{1.0}{1.0} &
    \slashbox{1.0}{.999} \\\hline
$\KNN$ (k=5) & 
    \slashbox{0.0}{0.0} &
    \slashbox{.825}{.828} &
    \slashbox{.831}{.832} &
    \slashbox{.835}{.833} \\\hline
\thead{$\conjtest$\\ (k=5, t=5)} & 
    \slashbox{0.0}{0.0} &
    \slashbox{.017}{.017} &
    \slashbox{.099}{.059} &
    \slashbox{.099}{.059} \\\hline
\thead{$\conjtest^+$\\ (k=5, t=5) }& 
    \slashbox{0.0}{.001} &
    \slashbox{.027}{.023} &
    \slashbox{.104}{.065} &
    \slashbox{.104}{.064} \\\hline
\end{tabular}
\caption{
    Comparison of conjugacy measures for time series generated by logistic and tent maps. 
    The number in the upper left part of the cell corresponds to a comparison of the first time series vs. the second one, while the lower right number corresponds to the inverse comparison.
}
\label{tab:experiment_log_tent}
\end{center}
\end{table}

\subsubsection{Experiment 3B}\label{sssec:exp_3b}

The logistic map is one of the standard examples of chaotic maps.
Thus, we expect that the behavior of the system will change significantly if we modify the parameter $l$. 
Here, we examine how the perturbation of $l$ affects the outcome of tested methods.

\paragraph{Setup}
We generated a collection of time series:
\[
    \left\{\cB(l) := \varrho(f_{l}, 0.2, 2000) \mid l\in\{3.8, 3.805, 3.81,\ldots, 4.0\} \right\}. 
\]
Every time series $\cB(l)$ in the collection was compared with a reference time series $\cB(4.0)$.

\paragraph{Results}
The results are plotted in Figure \ref{fig:log_param_grid}. 
As Experiment $\hyperref[sssec:exp_3a]{\text{3A}}$ suggested, $\fnn$ and $\KNN$ quickly saturate, providing almost ``binary'' response i.e. the output value is either 0 or a fixed non-zero number depending on  parameter $k$. Similarly to Experiment $\hyperref[sssec:exp_1b]{\text{1B}}$ we observe that with a higher parameter $k$ the curve corresponding to $\KNN$ gets more similar to $\fnn$ and becomes nearly a step function.
$\conjtest$ admits approximately continuous dependence on the value of the parameter of the system. 
However, higher values of the parameter $t$ of $\conjtest$ make the curve more steep and forms a significant step down in the vicinity of $l=4$.
This makes sense, because the more time-steps forward we take into account the more nonlinearity of the system affects the tested neighborhood.

We presume that the observed drop of $\fnn$ values and increase of $\conjtest$ values for the parameter $l$ value approximately in the interval $\{3.83, 3.86\}$ is caused by the collapse of the attractor to the $3$-periodic orbit observed for these parameter values
(see bifurcation diagram in Figure \ref{fig:logistic_clustering}).

Obviously, the logistic map with different parameter values won't be conjugate.
However, since we work with only finite samples, it might be not enough to rigorously distinguish them if the difference of the parameter is small.
The results can only suggest an empirical similarity of the underlying dynamical systems.

\begin{figure}
    \centering
    \includegraphics[width=0.8\textwidth]{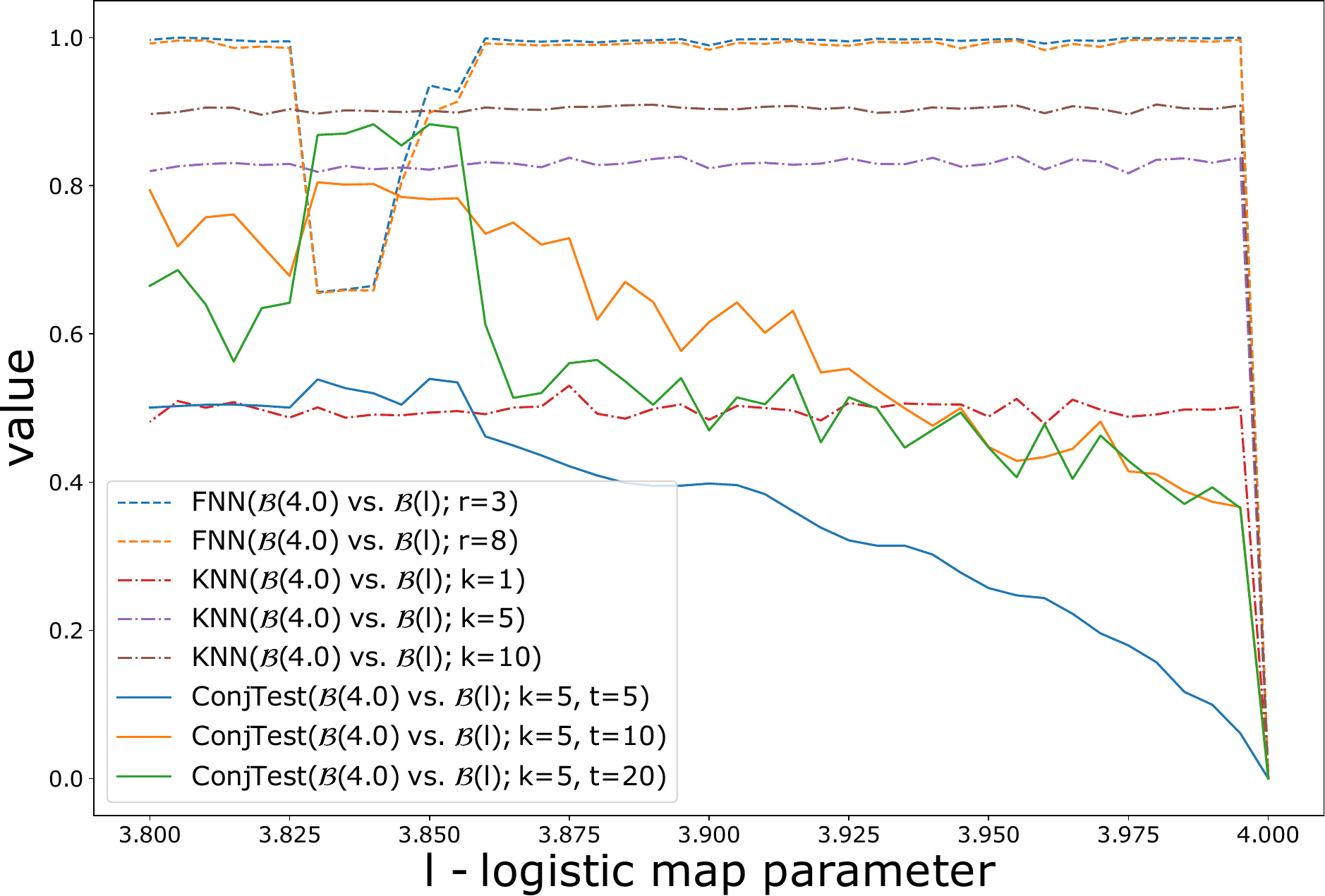}
    \caption{Dependence of the conjugacy measures on a change of the parameter of the logistic map. }
    \label{fig:log_param_grid}
\end{figure}

\subsubsection{Experiment 3C}\label{sssec:exp_3c}
As observed in the previous experiment, a change of the parameter $l$ in the logistic equation may significantly change the dynamical nature of the system.
In this experiment we use the $\conjtest$ to grasp the types of dynamics as a function of $l$ parameter.

\paragraph{Setup}
First, we generated the following collection of time series:
\begin{equation}\label{eq:bifurcation_experiment_time_series}
    \left\{\cB(l, p) := \varrho(f_{l}, f_{l}^{500}(p), 2000) \mid l\in\{3.4, 3.405, 3.41,\ldots, 4.0\},\ p\in\{0.11, 0.31\} \right\}. 
\end{equation}
Note that each time series starts at $500$-th iterate of point $p$.
It is a standard procedure allowing the trajectory to settle down on an attractor. 
We compared every two time series $\cB(l, p)$ and $\cB(l', p')$ with the formula
We assign a similarity for each pair of time series $\cB(l, p)$ and $\cB(l', p')$ via
\begin{equation}\label{eq:similarity_between_time_series}
        \max\left\{
            \conjtest(\cB(l, p), \cB(l', p'); k, t, \id), \conjtest(\cB(l', p'), \cB(l, p); k, t, \id)
        \right\}
\end{equation}
with fixed $k=5$ and $t=2$.
The obtained similarity matrix was then applied to the single-linkage agglomerative hierarchical clustering. 

\paragraph{Results}
The dendrogram in Figure \ref{fig:logistic_dendrogram} presents the output of the experiment.
Every leaf represents a single time series corresponding to a pair $(l, p)$ of the parameter value and a starting point.
With a threshold value $0.35$ we can distinguish $10$ clusters.
For every value of parameter $l$, both time series $\cB(l, 0.11)$ and $\cB(l, 0.31)$ fall into the same cluster.

We draw the result of the clustering on the bifurcation diagram in Figure \ref{fig:logistic_clustering}.
As one can expect, the time series grouped according to their dynamics type and their proximity in the parameter space.
The dendrogram structure indicates additional substructures within the clusters. 
For instance, the pink cluster contains two visible subclasses from which one corresponds to a set of 4-periodic orbit, while the second aggregates the attractors after the initial period doubling bifurcations.

\begin{figure}
    \centering
    \includegraphics[width=1.0\textwidth]{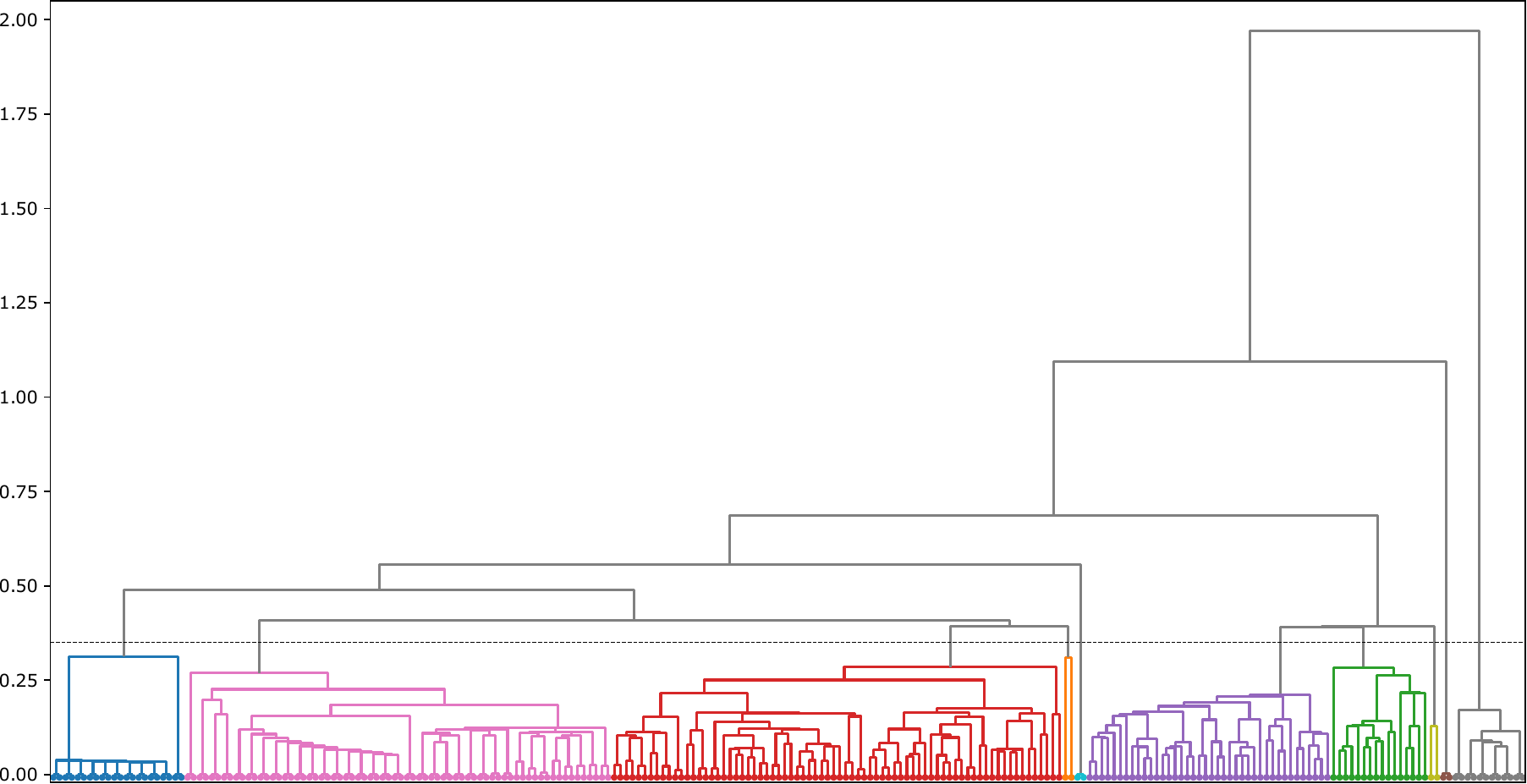}
    \caption{
        Dendrogram obtained from the single-linkage agglomerative hierarchical clustering of the collection of time series \eqref{eq:bifurcation_experiment_time_series} generated from the logistic map using similarity score defined \eqref{eq:similarity_between_time_series}.
        The horizontal dashed line represents the threshold chosen for the clustering.
        The distribution of the clustered time series on bifurcation diagram is presented in Figure~\ref{fig:logistic_clustering}.
    }
    \label{fig:logistic_dendrogram}
\end{figure}

\begin{figure}
    \centering
    \includegraphics[width=0.8\textwidth]{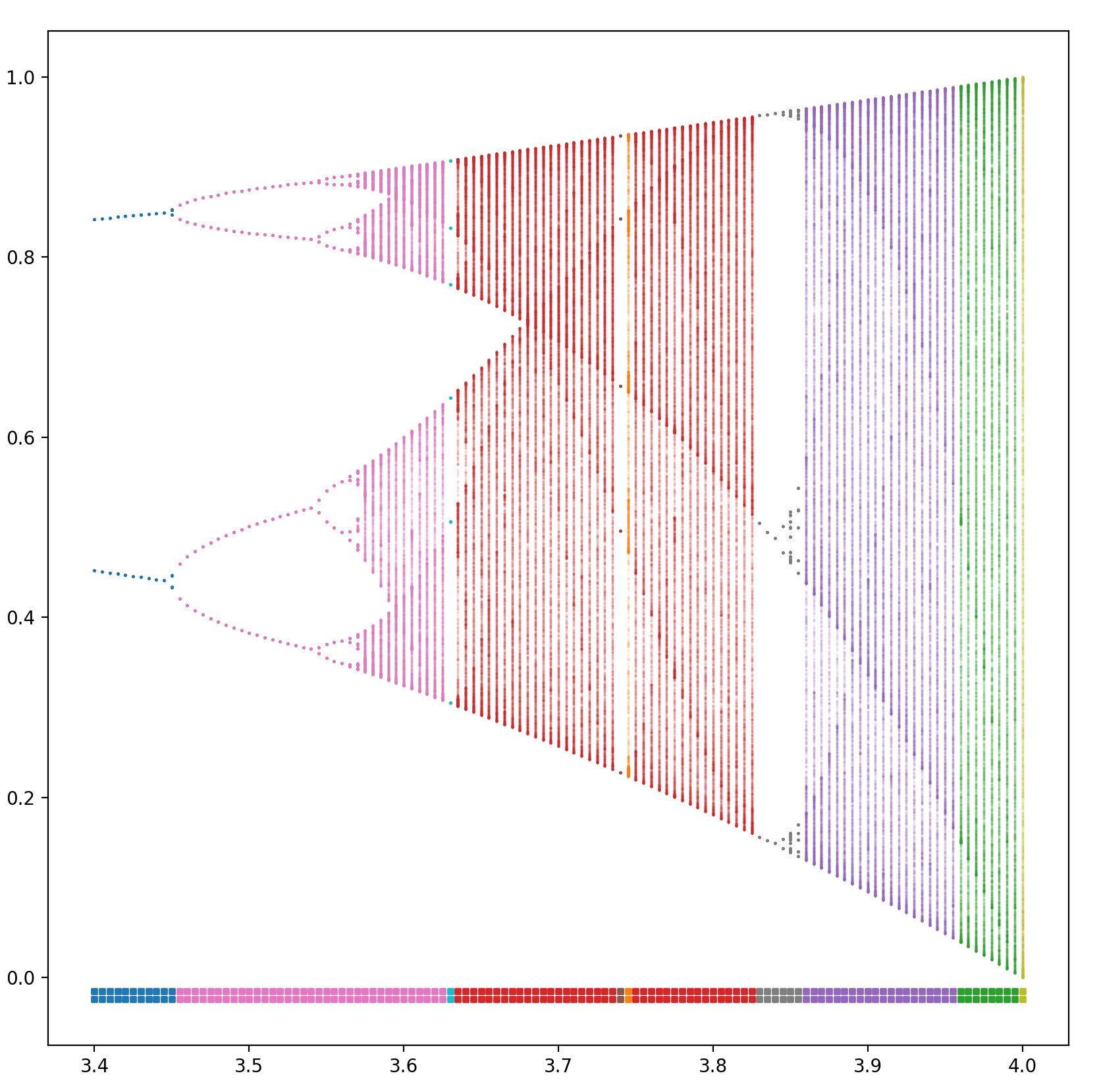}
    \caption{
        Partition of the bifurcation diagram based on the clustering of time series generated from the logistic map presented in Figure~\ref{fig:logistic_dendrogram}.
        Every square at the bottom of the image represents a time series corresponding to a value of parameter $l$ given by the horizontal axis. Top row corresponds to the starting point $0.11$, bottom row to $0.31$.
        Color of a square indicates the cluster into which the corresponding trajectory belongs.
    }
    \label{fig:logistic_clustering}
\end{figure}

\subsection{Example: Lorenz attractor and its embeddings}\label{sec:lorenz_experiment}
The fourth example is based on the Lorenz system defined by equations:
\begin{equation}\label{eq:Lorenz_system}
   \begin{cases}
        \dot{x}&=\sigma(y-x),\\
        \dot{y}&=x(\rho-z)-y,\\
        \dot{z}&=xy-\beta z,
   \end{cases} 
\end{equation}
which induces a continuous dynamical system $\varphi:\RR^3\times\RR\rightarrow\RR^3$.
We consider the classical values of the parameters: $\sigma=10$, $\rho=28$, and $\beta=8/3$.
A time series can be generated by iterates of the map $f(x):=\varphi(x,\tilde{t})$, where $\tilde{t}>0$ is a fixed value of the time parameter.
For the following experiments we chose $\tilde{t}=0.02$ and we use the Runge-Kutta method of an order 5(4) to generate the time series. 

\subsubsection{Experiment 4A}\label{sssec:exp_4a}
\paragraph{Setup}
Let $p_1=(1,1,1)$ and $p_2=(2,1,1)$.
In this experiment we compare the following time series:
\begin{equation*}
    \cL_i=\varrho(f, f^{2000}(p_i), 10000),\ \
    \cP^i_{x,d}=\Pi(\proj{\cL_i}{1}, d, 5),\ \
    \cP^i_{z,d}=\Pi(\proj{\cL_i}{3}, d, 5),
\end{equation*}
where  $i\in\{1,2\}$.
Recall that $\Pi$ denotes the embedding of a time series into $\RR^d$ and $\cL_i^{j}$ is a projection of time series $\cL_i$ onto its $j$-th coordinate. In all the embeddings we choose the lag $l=5$.
Note that the first point of time series $\cL_i$ is equal to the $2000$-th iterate of point $p_i$ under map $f$.
It is a standard procedure to cut off some transient part of the time series. 

Time series $\cP^i_{x,d}$ and  $\cP^i_{z,d}$ are embeddings of the first and third coordinate of $\cL_i$, respectively.
As Figure \ref{fig:lorenz_time_series} (top right) suggests, the embedding of the first coordinate into $\RR^3$ results in a structure topologically similar to the Lorenz attractor.
The embedding of the third coordinate, due to the symmetry of the system, produces a collapsed structure with ``wings'' of the attractor glued together (Figure \ref{fig:lorenz_time_series}, right).
Thus, we expect time series $\cP^i_{z,d}$ to be recognized as non-conjugate to $\cL_i$. 

\begin{figure}
    \centering
    \includegraphics[width=0.35\textwidth]{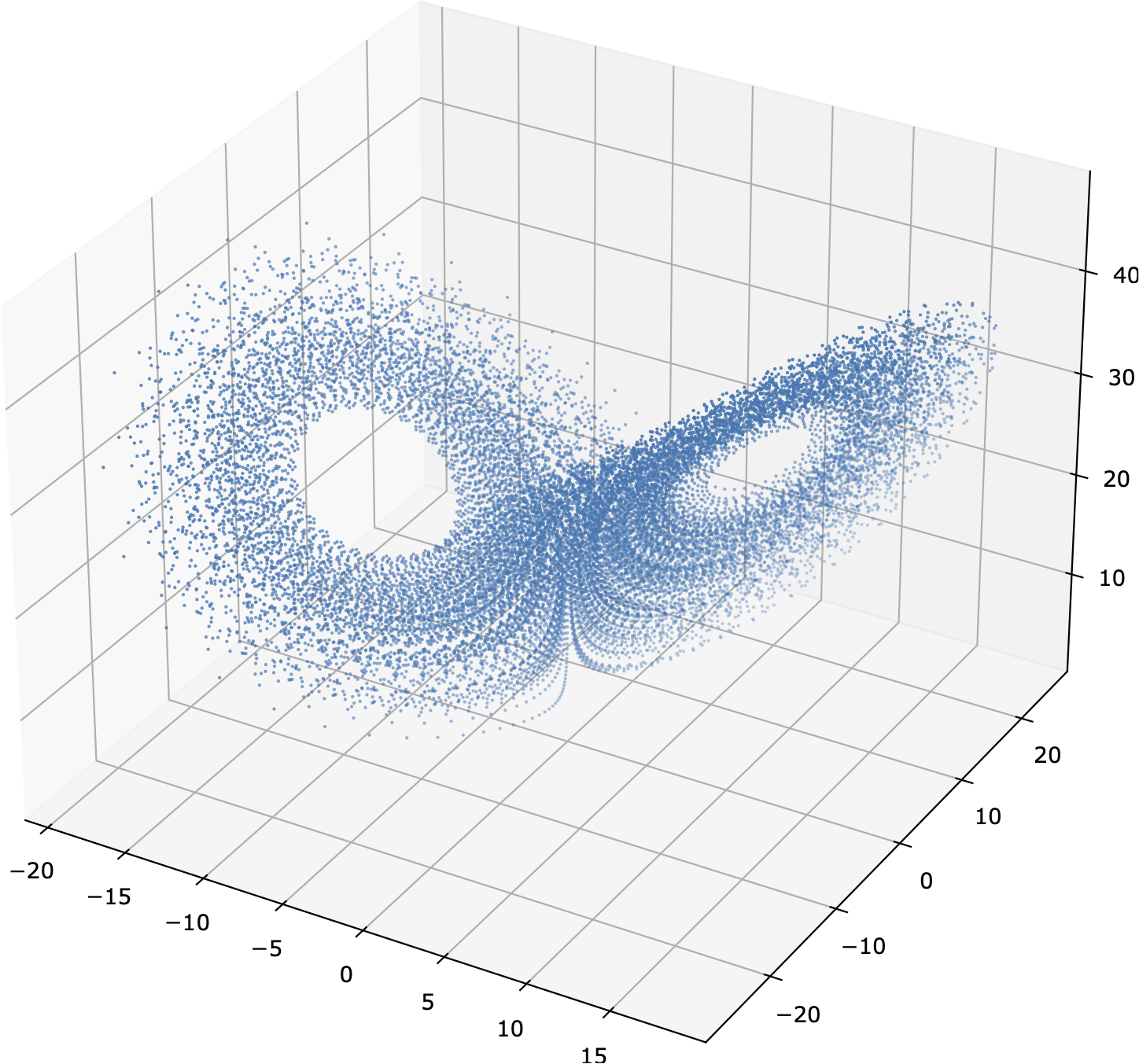}
    \hspace{0.5cm}
    \includegraphics[width=0.35\textwidth]{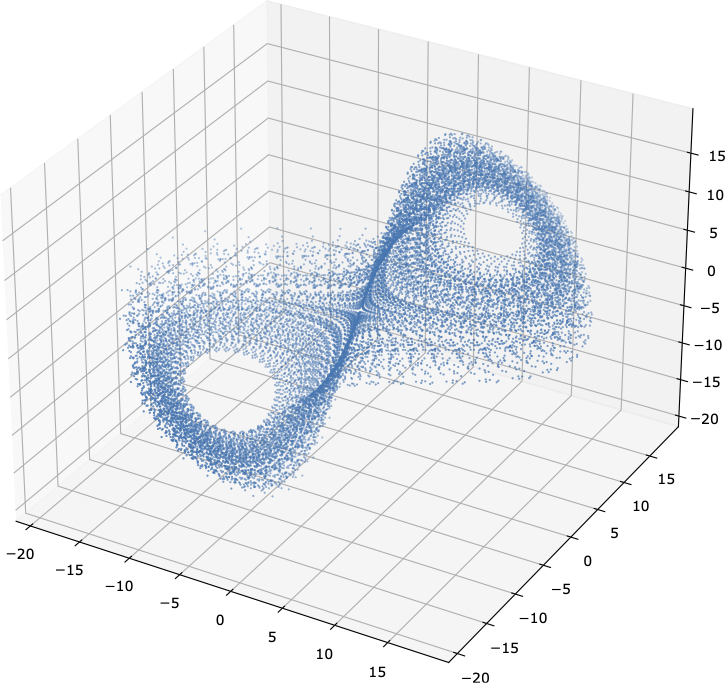}
    
    \includegraphics[width=0.35\textwidth]{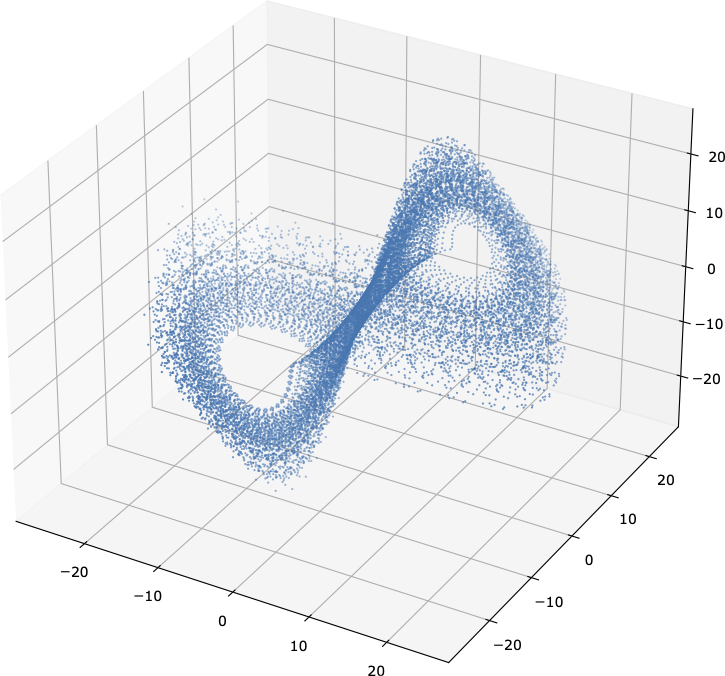}
    \hspace{0.5cm}
    \includegraphics[width=0.35\textwidth]{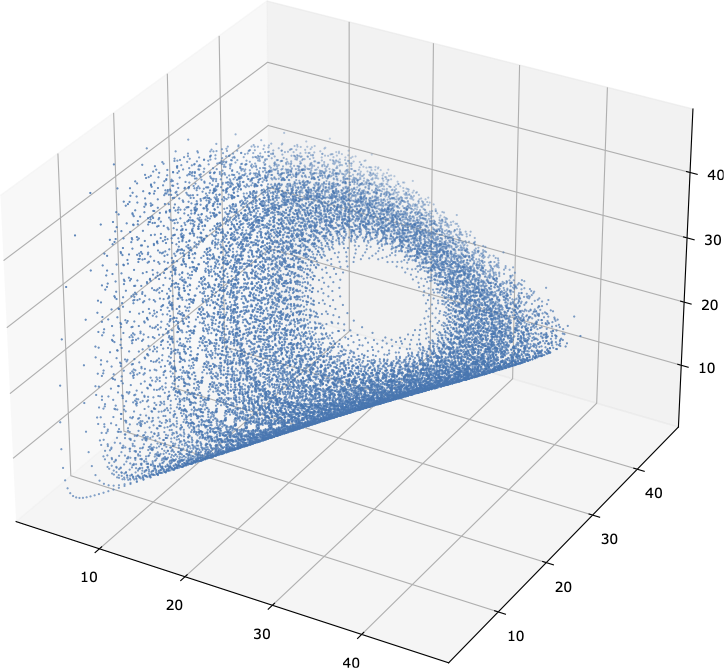}
    \caption{A time series generated from the Lorenz system (top left) and 3-d embeddings of its projections onto the $x$-coordinate (top right), $y$-coordinate (bottom left) and $z$-coordinate (bottom right) with a delay parameter $l=5$.}
    \label{fig:lorenz_time_series}
\end{figure}

In order to compare $\cL_i$ and embedded time series with $\conjtest$ we shall find the suitable map $h$. Ideally, such a map should be a homeomorphism between the Lorenz attractor $L\subset \RR^3$ (or precisely the $\omega$-limit set of the corresponding initial condition under the system \eqref{eq:Lorenz_system}) and its image $h(L)$. However, the construction of the time series allows us to easily define the best candidate for such a correspondence map pointwise for all elements of the time series. 

For instance, a local approximation of $h$ when comparing $\cL_i\subset \RR^3$ and $P^j_{x,d}\subset \RR^d$ will be given by:
\begin{equation}\label{eq:lorenz_h}
    h:\cL_i\ni \mathbf{x}_t \mapsto (\mathbf{x}^{(1)}_t, \mathbf{x}^{(1)}_{t+5}, \ldots, \mathbf{x}^{(1)}_{t+5d})\in P^i_{x,d}\subseteq \RR^d,
\end{equation}
where $\mathbf{x}_t:=(x_t, y_t, z_t)\in \RR^3$ denotes the state of the system \eqref{eq:Lorenz_system} at time $t$ and $\mathbf{x}^{(1)}_t=x_t$ denotes its projection onto the $x$-coordinate. When $j=i$ this formula matches the points of $\cL_i$  to the corresponding points of $P^j_{x,d}=P^i_{x,d}$. However, if $j\neq i$ then the points of $\cL_i$ are mapped onto the points of $P^i_{x,d}$, not  to $P^j_{x,d}$, thus in fact in our comparison tests we verify how well $P^j_{x,d}$ approximates the image of $\cL_i$ under $h$ and the original dynamics. 

For the symmetric comparison of $P^j_{x,d}$ with $\cL_i$ the local approximation of $h^{-1}$ $:\RR^d\rightarrow\RR^3$ will take a form:
\begin{equation}\label{eq:lorenz_h-1}
    h^{-1}:P^j_{x,d}\ni(\mathbf{x}^{(1)}_t, \mathbf{x}^{(1)}_{t+5}, \ldots, \mathbf{x}^{(1)}_{t+5d}) \mapsto \mathbf{x}_t\in\cL_j\subset\RR^3.
\end{equation}
These are naive and data driven approximations of the potential connecting map $h$.
In particular the homeomorphism from the Lorenz attractor to its 1D embedding cannot exist, but we still can construct map $h$ using the above receipt, which seems natural and the best candidate for such a comparison. More sophisticated ways of finding the optimal $h$ in general situations will be the subject of our future studies.

In the experiments below we use the maximum metric.

\paragraph{Results}
As one can expect, Table \ref{tab:experiment_lorenz_L1_to_P1_dmax} shows that embeddings of the first coordinate give in general noticeably lower values then embeddings of the $z$'th coordinate.  
Thus, suggesting that $\cL_1$ is conjugate to $\cP_{x,3}^1$, but not to $\cP_{z,3}^1$.
Again, Table \ref{tab:experiment_lorenz_L1_to_P2_dmax} shows that, in the case of chaotic systems, $\fnn$ and $\KNN$ are highly sensitive to variation in starting points of the series.

All methods suggest that 2-d embedding of the $x$-coordinate has structure reasonably compatible with $\cL_1$.
With the additional dimension values gets only slightly lower. 
One could expect that 3 dimensions would be necessary for an accurate reconstruction of the attractor. Note that Takens' Embedding Theorem suggests even dimension of $5$, as the Hausdorff dimension of the Lorenz attractor is about $2.06$ \cite{Viswanath2004}. However, it often turns out that the dynamics can be reconstructed with the embedding dimension less than given by Takens' Embedding Theorem (as implied e.g. by Probabilistic Takens' Embedding Theorem, see \cite{Baranski_2020,Baranski_2022}). 
We also attribute our outcome to the observation that the $x$-coordinate carries a large piece of the system information, which is visually presented in Figure \ref{fig:lorenz_time_series}.

Interestingly, when we use $\conjtest$ to compare $\cL_1$ with embedding time series generated from $\cL_1$ we always get values $0.0$.
The connecting maps used in this experiment, defined by \eqref{eq:lorenz_h} and \eqref{eq:lorenz_h-1}, establish a direct correspondence between points in two time series.
As a result we get $\tilde{h}=h$ in the definition of $\conjtest$, and consequently, every pair of sets in the numerator of equation \eqref{eq:conjtest} is the same.
If the embedded time series comes from another trajectory then $\tilde{h}\neq h$ and $\conjtest$ gives the expected results,  as visible in Table \ref{tab:experiment_lorenz_L1_to_P2_dmax}.
On the other hand, computationally more demanding $\conjtest^+$ exhibits virtually the same results in both cases, when $\cL_1$ is compared with embeddings of its own (Table \ref{tab:experiment_lorenz_L1_to_P1_dmax}) and when $\cL_1$ is compared with embeddings of $\cL_2$ (Table \ref{tab:experiment_lorenz_L1_to_P2_dmax}).  

\begin{table}
\centering
\begin{footnotesize}
\begin{tabular}{|c|c|c|c|c|c|c|} \hline
\backslashbox{method}{test}
            & \thead{$\cL_1$ vs. $\cP^1_{x,1}$} 
            & \thead{$\cL_1$ vs. $\cP^1_{x,2}$} 
            & \thead{$\cL_1$ vs. $\cP^1_{x,3}$} 
            & \thead{$\cL_1$ vs. $\cP^1_{z,1}$}
            & \thead{$\cL_1$ vs. $\cP^1_{z,3}$} \\ \hline
$\fnn$ (r=3)  & 
    \slashbox{0.0}{1.0} & 
    \slashbox{0.0}{.362} & 
    \slashbox{.05}{.196} & 
    \slashbox{0.0}{1.0} & 
    \slashbox{.111}{.541}  \\\hline
$\KNN$ (k=5)  & 
    \slashbox{.019}{.465} & 
    \slashbox{.003}{.036} & 
    \slashbox{.003}{.007} & 
    \slashbox{.024}{.743} & 
    \slashbox{.002}{.519}  \\\hline
\thead{$\conjtest$\\(k=5, t=10)}  & 
    \slashbox{0.0}{0.0} & 
    \slashbox{0.0}{0.0} & 
    \slashbox{0.0}{0.0} & 
    \slashbox{0.0}{0.0} & 
    \slashbox{0.0}{0.0}  \\\hline
\thead{$\conjtest^+$\\(k=5, t=10)}  & 
    \slashbox{.330}{.401} & 
    \slashbox{.030}{.087} & 
    \slashbox{.024}{.051} & 
    \slashbox{.406}{.396} & 
    \slashbox{.046}{.407}  \\\hline
\end{tabular}
\end{footnotesize}
\caption{
    Comparison of conjugacy measures for time series generated by the Lorenz system. 
    The number in the upper left part of the cell corresponds to a comparison of $\cL_1$ vs. the second time series, while the lower right number corresponds to the inverse comparison.
}
\label{tab:experiment_lorenz_L1_to_P1_dmax}
\end{table}

\begin{table}
\centering
\begin{footnotesize}
\begin{tabular}{|c|c|c|c|c|c|c|} \hline
\backslashbox{method}{test}
            & \thead{$\cL_1$ vs. $\cL_2$} 
            & \thead{$\cL_1$ vs. $\cP^2_{x,1}$} 
            & \thead{$\cL_1$ vs. $\cP^2_{x,2}$} 
            & \thead{$\cL_1$ vs. $\cP^2_{x,3}$} 
            & \thead{$\cL_1$ vs. $\cP^2_{z,1}$}
            & \thead{$\cL_1$ vs. $\cP^2_{z,3}$} \\ \hline
$\fnn$ (r=3)  & 
    \slashbox{.995}{.996} & 
    \slashbox{.955}{1.0} & 
    \slashbox{.987}{.996} & 
    \slashbox{.991}{.996} & 
    \slashbox{.963}{1.0} & 
    \slashbox{.996}{997.}  \\\hline
$\KNN$ (k=5)  & 
    \slashbox{.822}{.827} & 
    \slashbox{.826}{.832} & 
    \slashbox{.829}{.826} & 
    \slashbox{.829}{.825} & 
    \slashbox{.823}{.828} & 
    \slashbox{.820}{.833}  \\\hline
\thead{$\conjtest$\\(k=5, t=10)}  & 
    \slashbox{.010}{.009} & 
    \slashbox{.236}{.012} & 
    \slashbox{.016}{.010} & 
    \slashbox{.010}{.009} & 
    \slashbox{.391}{.012} & 
    \slashbox{.017}{.009}  \\\hline
\thead{$\conjtest^+$\\(k=5, t=10)}  & 
    \slashbox{.020}{.017} & 
    \slashbox{.331}{.400} & 
    \slashbox{.039}{.092} & 
    \slashbox{.033}{.056} & 
    \slashbox{.431}{.392} & 
    \slashbox{.060}{.404}  \\\hline
\end{tabular}
\end{footnotesize}
\caption{
    Comparison of conjugacy measures for time series generated by the Lorenz system. 
    The number in the upper left part of the cell corresponds to a comparison of $\cL_1$ vs. the second time series, the lower right corresponds to the symmetric comparison.
}
\label{tab:experiment_lorenz_L1_to_P2_dmax}
\end{table}

\subsubsection{Experiment 4B}\label{sssec:exp_4b}
This experiment is proceeded according to the standard use of $\fnn$ for estimating optimal embedding dimension without an explicit knowledge about the original system.

\paragraph{Setup}
Let $p=(1,1,1)$, we generate the following collection of time series
\begin{equation*}
    \cL=\varrho(f, f^{2000}(p), 10000),\quad
    \cP_{d}=\Pi(\proj{\cL}{1}, d, 5),
\end{equation*}
where $d\in\{1,2,3,4,5,6\}$.
In the experiment we compare pairs of embedded time series corresponding to consecutive dimensions, e.g., $\cP_d$ with $\cP_{d+1}$, for the entire range of parameter values.
We are looking for the minimal value of $d$ such that $\cP_{d-1}$ is dynamically different from $\cP_d$, but $\cP_d$ is similar to $\cP_{d+1}$.
The interpretation says that $d$ is optimal, because by passing from $d-1$ to $d$ we split some false neighborhoods apart (hence, dissimilarity of dynamics), but by passing from $d$ to $d+1$ there is no difference, because there is no false neighborhood left to be separated.

\paragraph{Results}
Results are presented in Figure \ref{fig:lorenz_embedding}. 
In general, the outputs of all methods are consistent.
When the one-dimensional embedding, $\cP_1$, is compared with two-dimensional embedding, $\cP_2$, we get large comparison values for the entire range of every parameter.
When we compare $\cP_2$ with $\cP_3$ the estimation of dissimilarity drops significantly, i.e. we conclude that the time series $\cP_2$ and $\cP_3$ are more ``similar'' than $\cP_1$ and $\cP_2$.
The comparison of $\cP_3$ with $\cP_4$ still decreases the values, suggesting that the third dimension improves the quality of our embedding.
The curve corresponding to $\cP_4$ vs. $\cP_5$ essentially overlaps $\cP_3$ vs. $\cP_4$ curve.
Thus, the third dimension seems to be a reasonable choice.

We can see that $\fnn$ (Figure \ref{fig:lorenz_embedding} top left), originally designed for this test, gives a clear answer.
However, in the case of $\KNN$ (Figure \ref{fig:lorenz_embedding} top right) the difference between the yellow and the green curve is rather subtle.
Thus, the outcome could be alternatively interpreted with a claim that two dimensions are enough for this embedding.
In the case of $\conjtest^+$ we have two parameters.
For the fixed value of $t=10$ we manipulated the value of $k$ (Figure \ref{fig:lorenz_embedding} bottom left) and the outcome matched up with the $\fnn$ result.
However, the situation is slightly different when we fix $k=5$ and vary the $t$ (Figure \ref{fig:lorenz_embedding} bottom right).
For $t<30$ the results suggest dimension $3$ to be optimal for the embedding, but for $t>40$ the green and the red curve split.
Moreover, for $t>70$, we can observe the beginning of another split of the red ($\cP_4$ vs. $\cP_5$) and the violet ($\cP_5$ vs. $\cP_6$) curves.
Hence, the answer is not fully conclusive.
We attribute this effect to the chaotic nature of the attractor.
The higher the value of $t$ the higher the effect.
We investigate it further in the following experiment.

\begin{figure}
    \centering
    \includegraphics[width=0.49\textwidth]{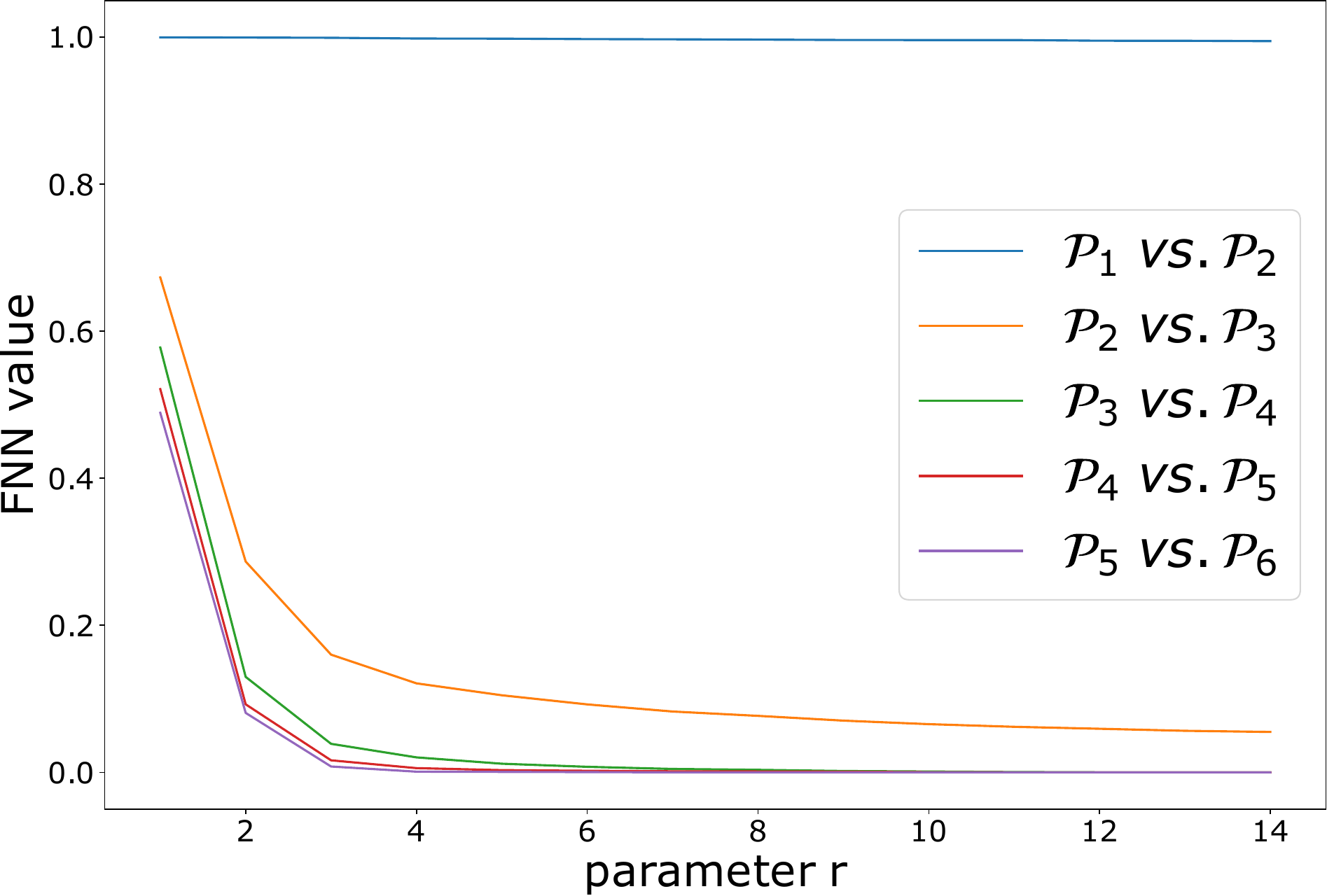}
    \hfill
    \includegraphics[width=0.49\textwidth]{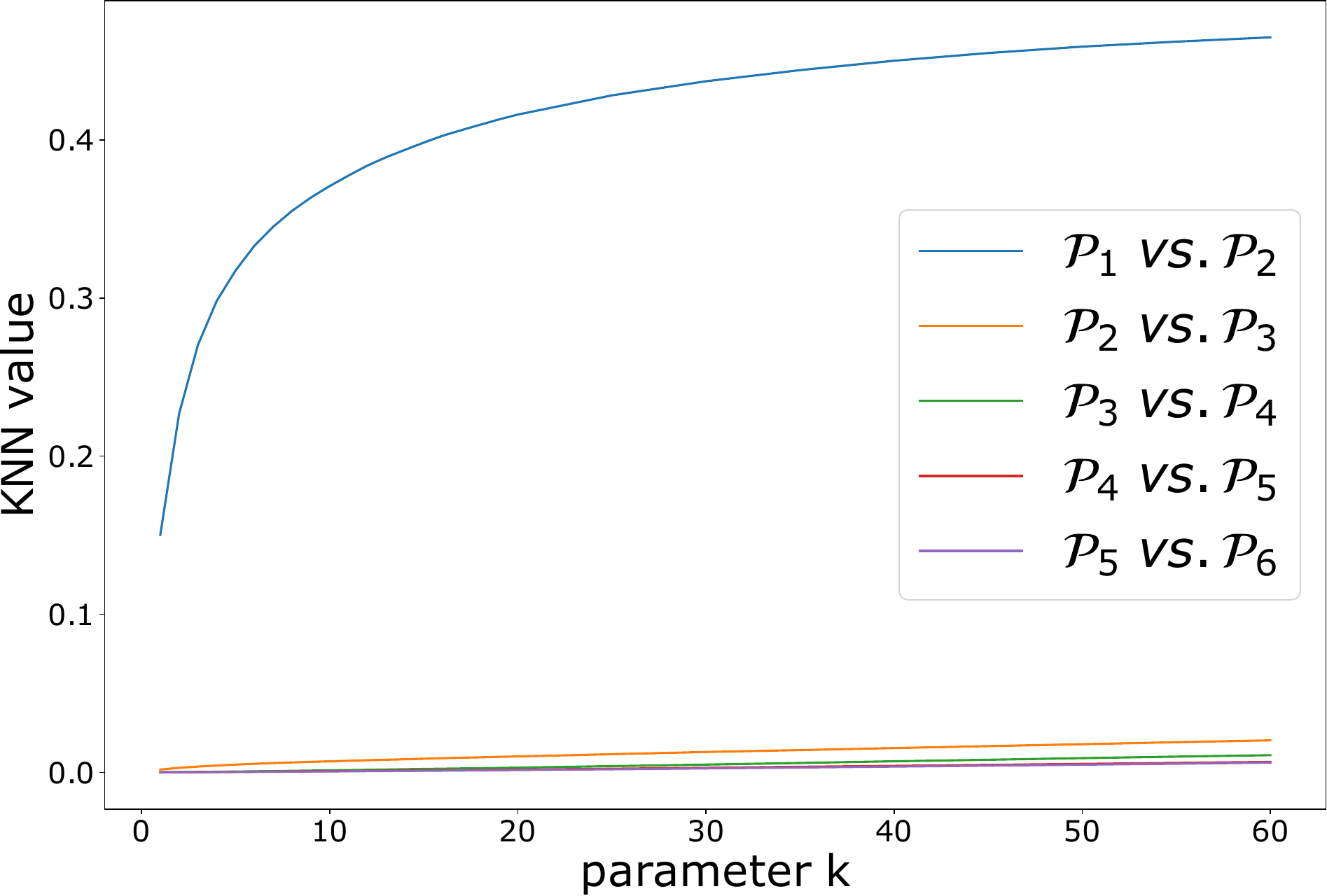}
    \vspace{0.5cm}
    
    \includegraphics[width=0.49\textwidth]{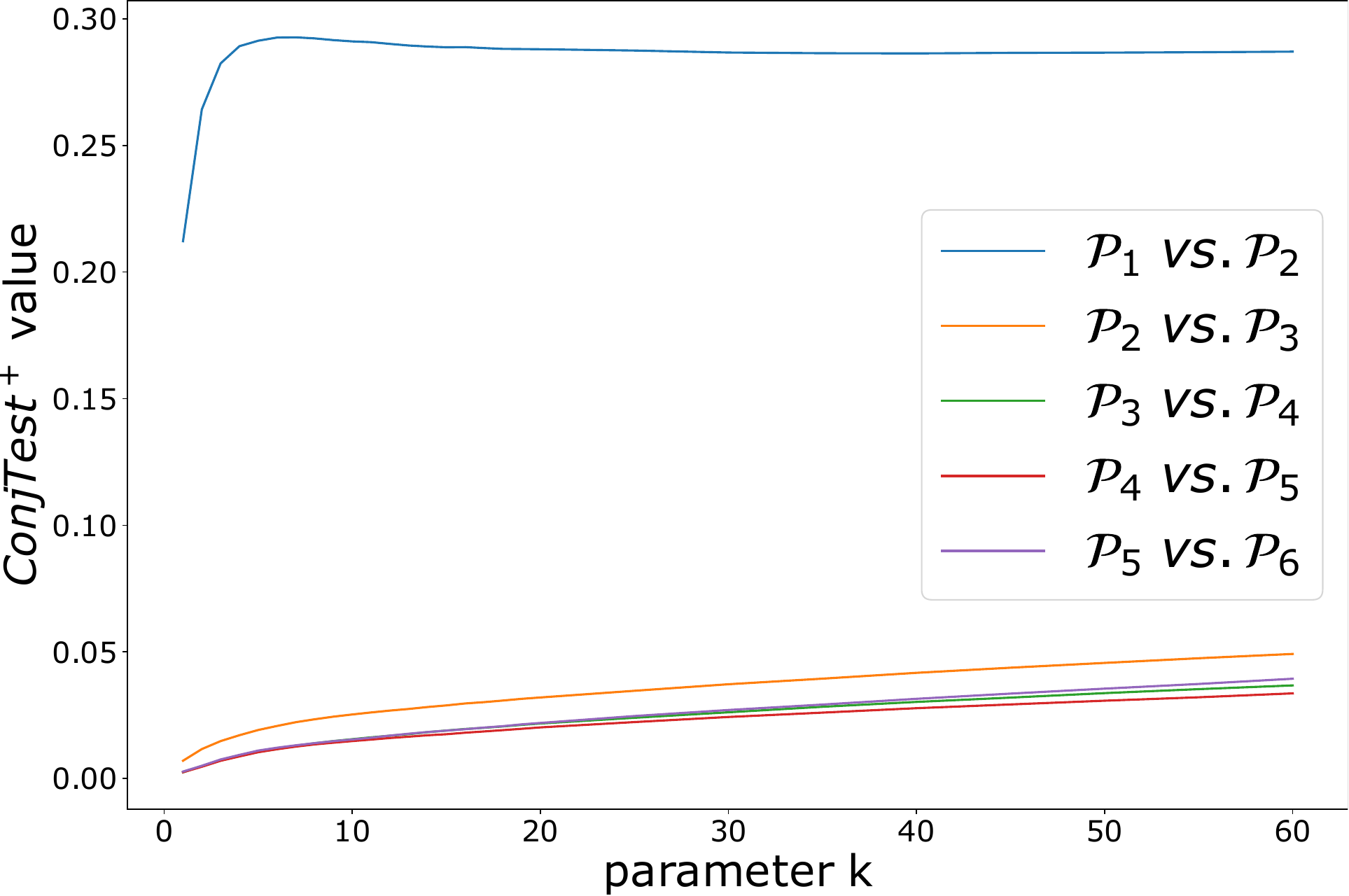}
    \hfill
    \includegraphics[width=0.49\textwidth]{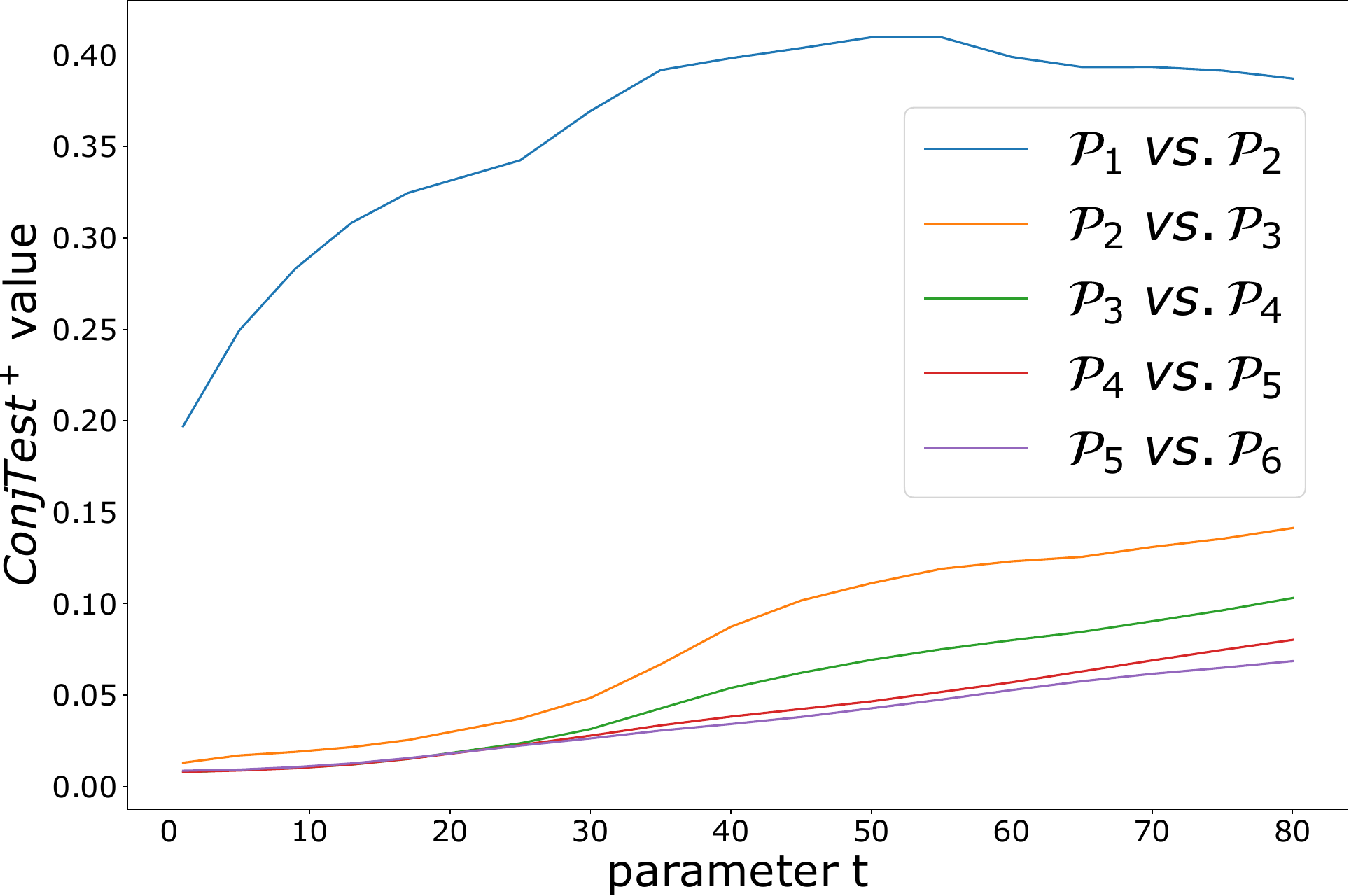}
    
    \caption{A comparison of embeddings for consecutive dimensions.
    Top left: $\fnn$ with respect to parameter $r$.
    Top right: $\KNN$ with respect to parameter $k$.
    Bottom left:  $\conjtest^+$ with respect to parameter $k$.
    Bottom right: $\conjtest^+$ with respect to parameter $t$.
    }
    \label{fig:lorenz_embedding}
\end{figure}

\subsubsection{Experiment 4C}\label{sssec:exp_4c}
In this experiment we investigate the dependence of $\conjtest^+$ on the choice of value of parameter $t$.
Parameter $t$ of $\conjtest^+$ controls how far we push the approximation of a neighborhood of a point $x_i$ ($U_k^i$ in \eqref{eq:conjtest_plus}) through the dynamics.
In the case of systems with a sensitive dependence on initial conditions (e.g., the Lorenz system) we could expect that higher values of $t$  spread the neighborhood over the attractor.
As a  consequence, we obtain higher values of $\conjtest^+$.

\paragraph{Setup}
Let $p_1=(1,1,1)$, $p_2=(2,1,1)$, $p_3=(1,2,1)$, and $p_4=(1,1,2)$.
In this experiment we study the following time series:
\begin{equation*}
    \cL_i=\varrho(f, f^{2000}(p_i), 10000),\quad
    \cP^i_{x,d}=\Pi(\proj{\cL_i}{1}, d, 5),\quad
    \cP^i_{y,d}=\Pi(\proj{\cL_i}{2}, d, 5),
\end{equation*}
where $i\in\{1,2,3,4\}$ and $d\in\{1,2,3,4\}$. 
We compare the reference time series $\cL_1$ with all the others using $\conjtest^+$ method with the range of parameter 
\[t\in\{1,5,9,13,17,21,25,30,35,40,45,50,55,60,65,70,75,80\}.\]

\paragraph{Results}
The top plot of Figure \ref{fig:lorenz_conj_time_grid} presents the results of comparing $\cL_1$ to the time series $\cL_i$ and $\cP^i_{x,d}$ with $i\in\{2,3,4\}$ and $d\in\{1,2,3,4\}$.
Red curves correspond to $\cL_1$ vs. $\cP^i_{x,1}$, green curves to $\cL_1$ vs. $\cP^i_{x,2}$, blue curves to $\cL_1$ vs. $\cP^i_{x,3}$, and  dark yellow curves to $\cL_1$ vs. $\cP^i_{x,4}$.
There are three curves of every color, each one corresponds to a different starting point $p_i$, $i\in\{2,3,4\}$. 
The bottom part shows results for comparison of $\cL_1$ to $\cP^i_{y,d}$ (we embed the $y$-coordinate time series instead of $x$-coordinate). 
The color of the curves is interpreted analogously.
Black curves on both plots are the same and correspond to the comparison of $\cL_1$ with $\cL_j$ for $j\in\{2,3,4\}$. 

As expected, we can observe a drift toward higher values of $\conjtest^+$ as the value of parameter $t$ increases.
Let us recall that $U^k_i$ in \eqref{eq:conjtest_plus} is a $k$-element approximation of a neighborhood of a point $x_i$.
The curve reflects how the image of $U^k_i$ under $f^t$ gets spread across the attractor with more iterations.
In consequence, a 2D embedding with $t=10$ might get lower value than $3$D embedding with $t=40$.
Nevertheless, Figure \ref{fig:lorenz_conj_time_grid} (top) shows consistency of the results across the tested range of values of parameter $t$.
Red curves corresponding to 1D embeddings give significantly higher values then the others.
We observe the strongest drop of values for 2D embeddings (green curves).
The third dimension (blue curves) does not improve the situation essentially, except for $t\in[1,25]$. 
The curves corresponding to 4D embeddings (yellow curves) overlap those of 3D embeddings.
Thus, the 4D embedded system does not resemble the Lorenz attractor essentially better than the 3D embedding.
It agrees with the analysis in the Experiment $\hyperref[sssec:exp_4b]{\text{4B}}$.

The $y$-coordinate embeddings presented in the bottom part of Figure \ref{fig:lorenz_conj_time_grid} give similar results.
However, we can see that gaps between curves corresponding to different dimensions are more visible.
Moreover, the absolute level of all curves is higher.
We interpret this outcome with a claim that the $y$-coordinate inherits a bit less information about the original system than the $x$-coordinate.
In Figure \ref{fig:lorenz_time_series} we can see that $y$-embedding is more twisted in the center of the attractor.
Hence, generally values are higher, and more temporal information is needed (reflected by higher embedding dimension) to compensate. 

Note that the comparison of $\cL_1$ to any embedding $\cP^i_{x,d}$ is always significantly worse than comparison of $\cL_1$ to any $\cL_j$. 
This may suggest that any embedding is not perfect. 

\begin{figure}
    \centering
    \includegraphics[width=0.8\textwidth]{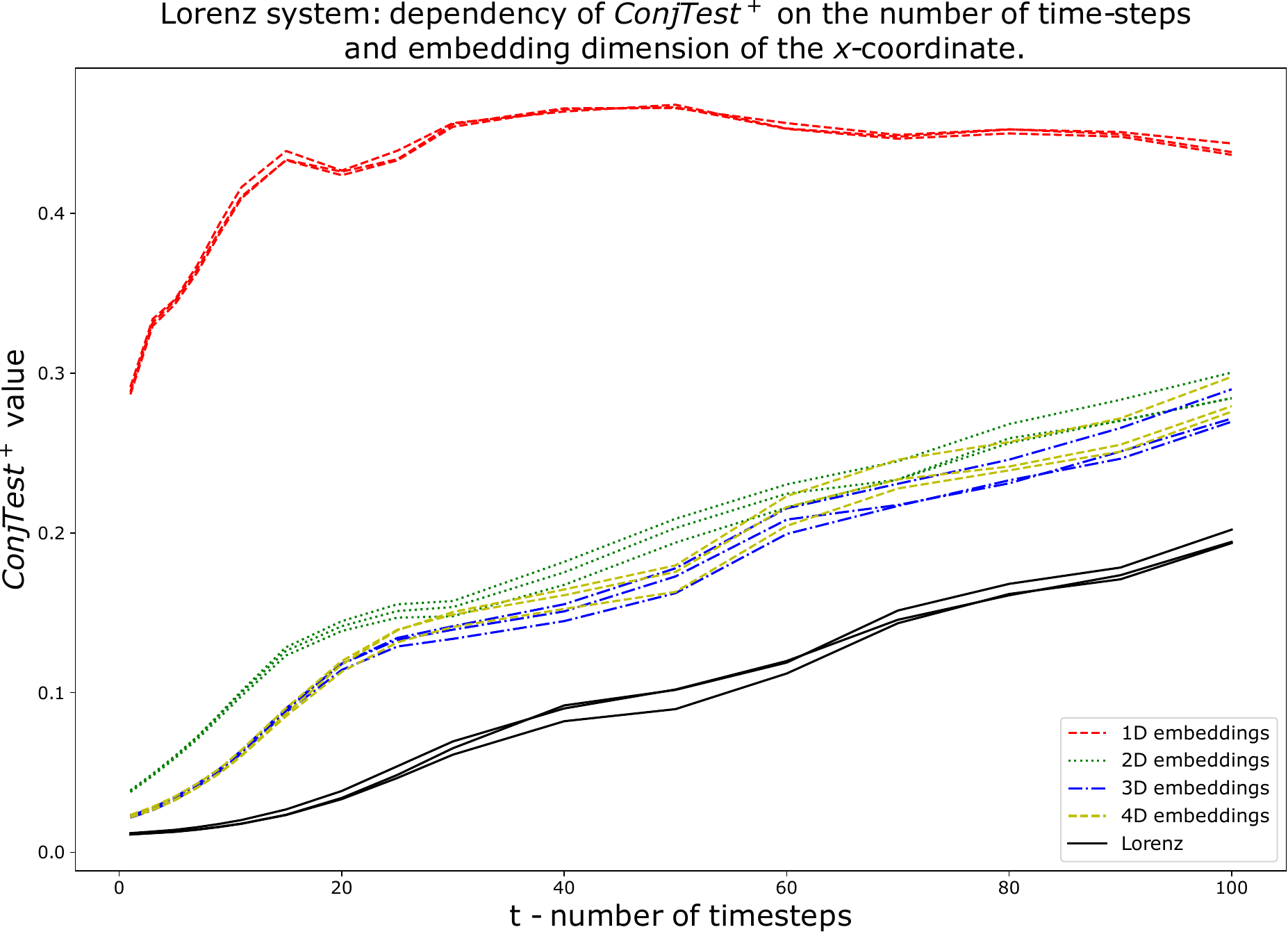}
    
    \vspace{0.2cm}
    \includegraphics[width=0.8\textwidth]{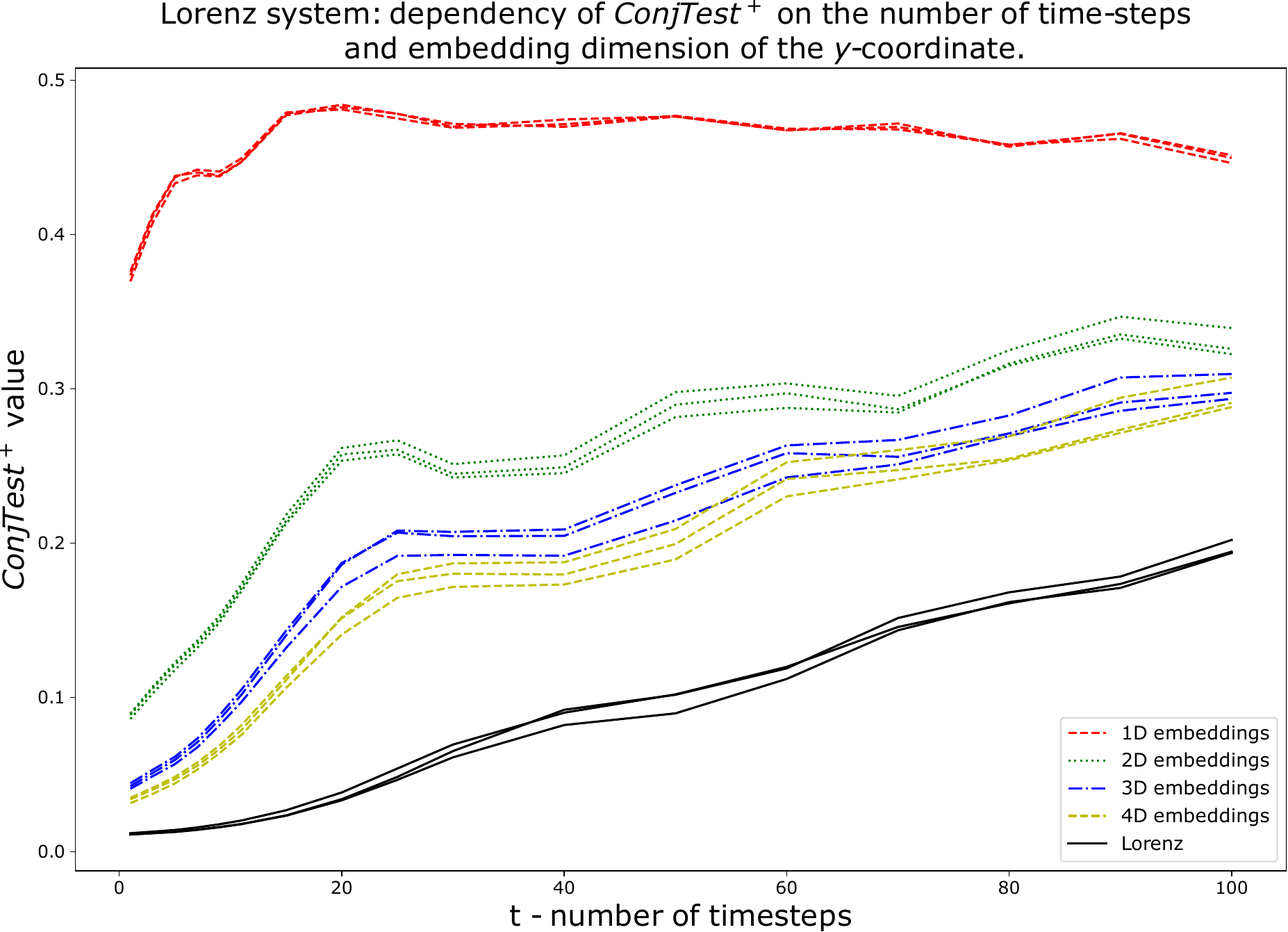}
    \caption{Dependence of $\conjtest^+$ on the parameter $t$ for Lorenz system.
        In this experiment multiple time series with  different starting points were generated.
        Each of them was used to produce an embedding.
        Top: comparison of $x$-coordinate embedding with $\cL_1$. 
        Bottom: comparison of $y$-coordinate embedding with $\cL_1$. For more explanation see text. 
    }
    \label{fig:lorenz_conj_time_grid}
\end{figure}

\subsection{Example: rotation on the Klein bottle}\label{sec:klein_bottle}

In the next example we consider the Klein bottle, denoted $\KK$ and defined as an image $\KK:=\im \beta$ of the map $\beta$: 
\begin{equation}\label{eq:KleinParam}
    \beta: [0,2\pi)\times [0,2\pi)\ni  
    \begin{bmatrix}
     x\\  y
    \end{bmatrix}
    \mapsto
    \begin{bmatrix}
        \cos\frac{ x}{2}\cos y- \sin\frac{x}{2}\sin(2 y)\\
        \sin\frac{x}{2}\cos y + \cos\frac{x}{2}\sin(2 y)\\
        8 \cos x (1 + \frac{\sin y}{2})\\
        8 \sin x (1 + \frac{\sin y}{2})
    \end{bmatrix}
    \in \RR^4.
\end{equation}
In particular, the map $\beta$ is a bijection onto its image and the following ``rotation map'' $f_{[\phi_1,\phi_2]}:\KK\rightarrow\KK$ over the Klein bottle is well-defined:

\begin{equation*}
    f_{[\phi_1,\phi_2]}(x) := \beta\left( \beta^{-1}(x) + 
        \begin{bmatrix}
            \phi_1\\ \phi_2 
        \end{bmatrix}
        \mod 2\pi
        \right).
\end{equation*}

\subsubsection{Experiment 5A}\label{sssec:exp_5a}
We conduct an experiment analogous to Experiment $\hyperref[sssec:exp_4b]{4B}$ on estimating the optimal embedding dimension of a projection of the Klein bottle.  
\paragraph{Setup}
We generate the following time series
\begin{gather*}
    \cK=\varrho(f_{[\phi_1,\phi_2]}, (0,0,0,0), 8000),\\
    \cP_{d}=\Pi\left(
        (\proj{\cK}{1} + \proj{\cK}{2} + \proj{\cK}{3} + \proj{\cK}{4})/4
        , d, 8\right),
\end{gather*}
where $\phi_1=\frac{\sqrt{2}}{10}$, $\phi_2=\frac{\sqrt{3}}{10}$, $d\in\{2,3,4,5\}$ and $\proj{\cK}{i}$ denotes the projection onto the $i$-th coordinate. Note that in previous experiments we mostly used a simple observable $s$ which was a projection onto a given coordinate. However, in general, one can consider any (smooth) function as an observable. Therefore in the current experiment, in the definition of $\cP_d$, $s$ is a sum of all the coordinates, not the projection onto a chosen one. Note also that because of the symmetries (see formula \eqref{eq:KleinParam}) a single coordinate might be not enough to reconstruct the Klein bottle. 

\paragraph{Results}
We can proceed with the interpretation similar to Experiment $\hyperref[sssec:exp_4b]{4B}$. 
The $\fnn$ results (Figure \ref{fig:klein_embedding} top left) suggests that $4$ is a sufficient embedding dimension. 
The similar conclusion follows from $\KNN$ (Figure \ref{fig:klein_embedding} top right) and $\conjtest^+$ with a fixed parameter $k=10$ (Figure \ref{fig:klein_embedding} bottom right). 
The bottom left figure of \ref{fig:klein_embedding} is inconclusive as for the higher values of $k$  the curves do not stabilize even with high dimension.

Note that the increase of parameter $t$ in $\conjtest^+$ (Figure \ref{fig:klein_embedding} bottom right) does not result in drift of values as in Figure \ref{fig:lorenz_embedding} (bottom right).
In contrast to the Lorenz system studied in Experiment $\hyperref[sssec:exp_4b]{4B}$ the rotation on the Klein bottle is not  sensitive to the initial conditions. 

\begin{figure}
    \centering
    \includegraphics[width=0.49\textwidth]{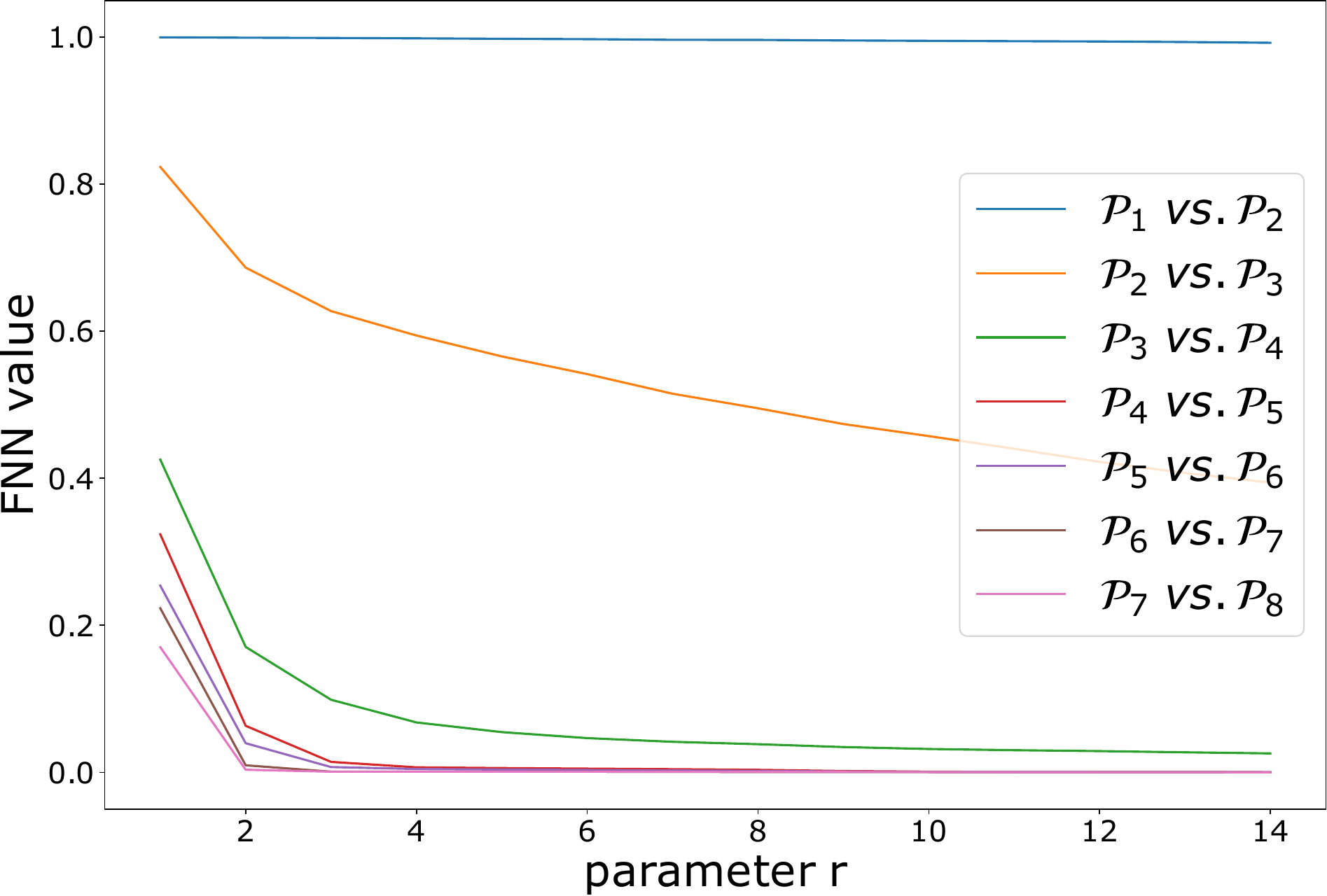}
    \hfill
    \includegraphics[width=0.49\textwidth]{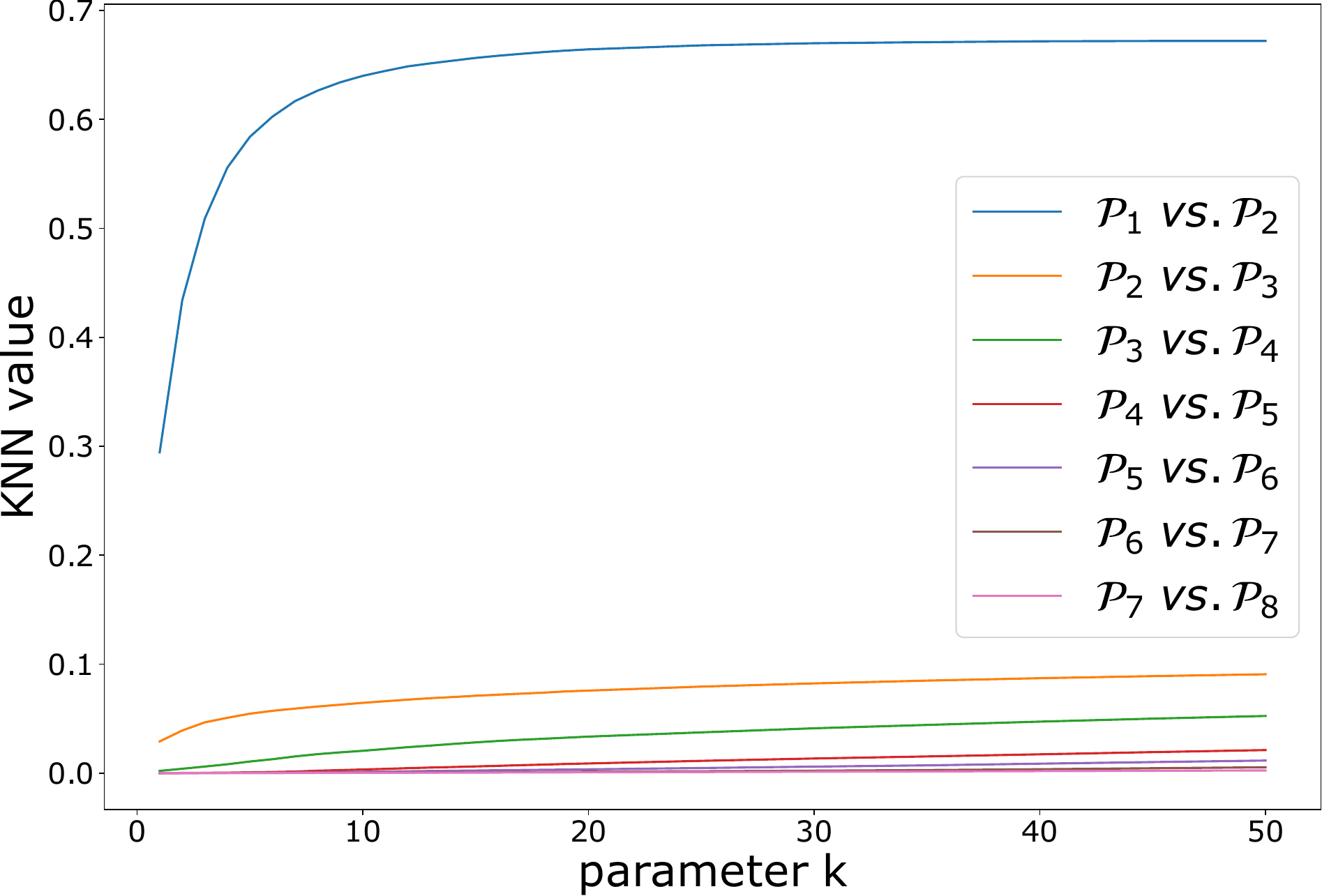}
    \vspace{0.5cm}
    
    \includegraphics[width=0.49\textwidth]{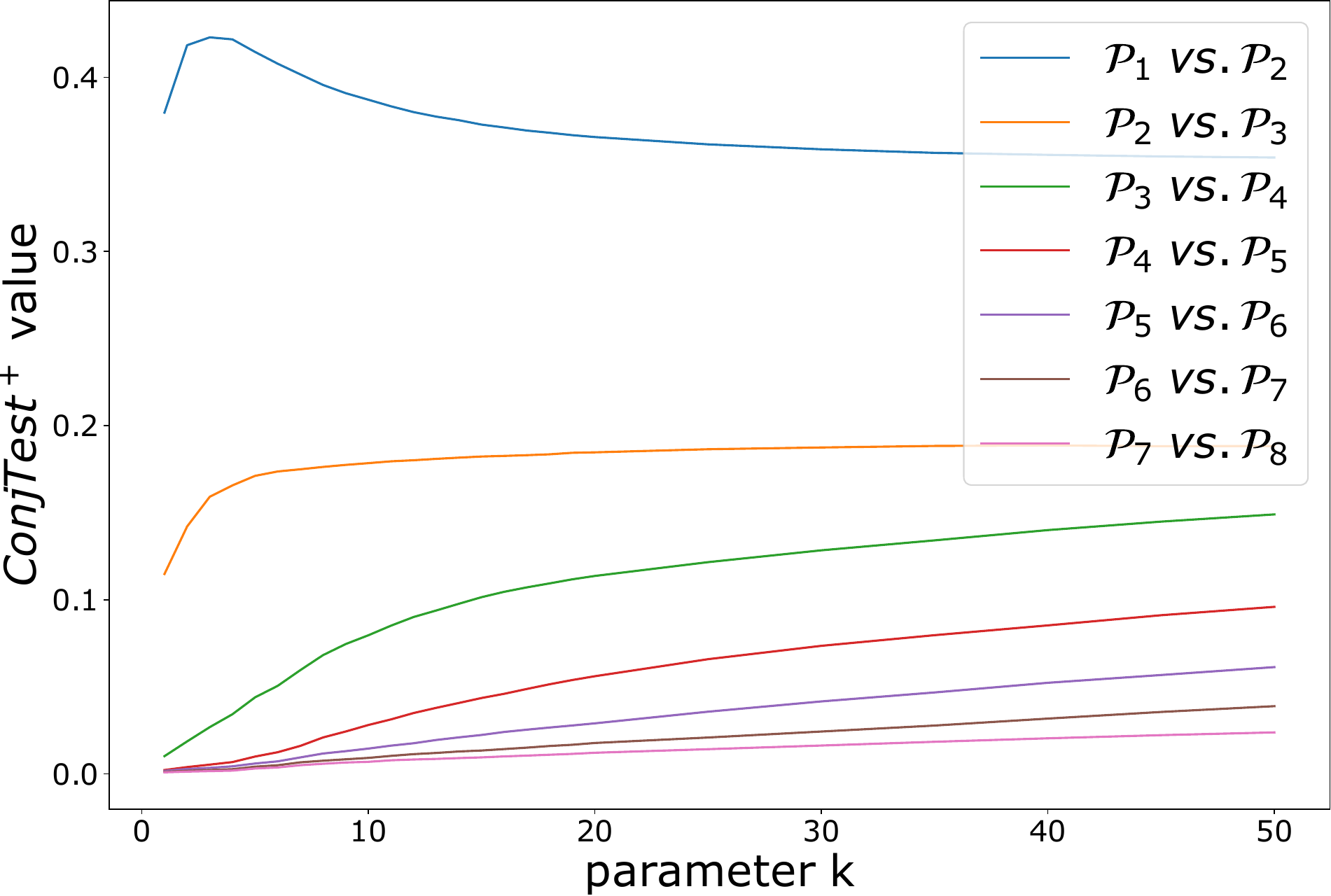}
    \hfill
    \includegraphics[width=0.49\textwidth]{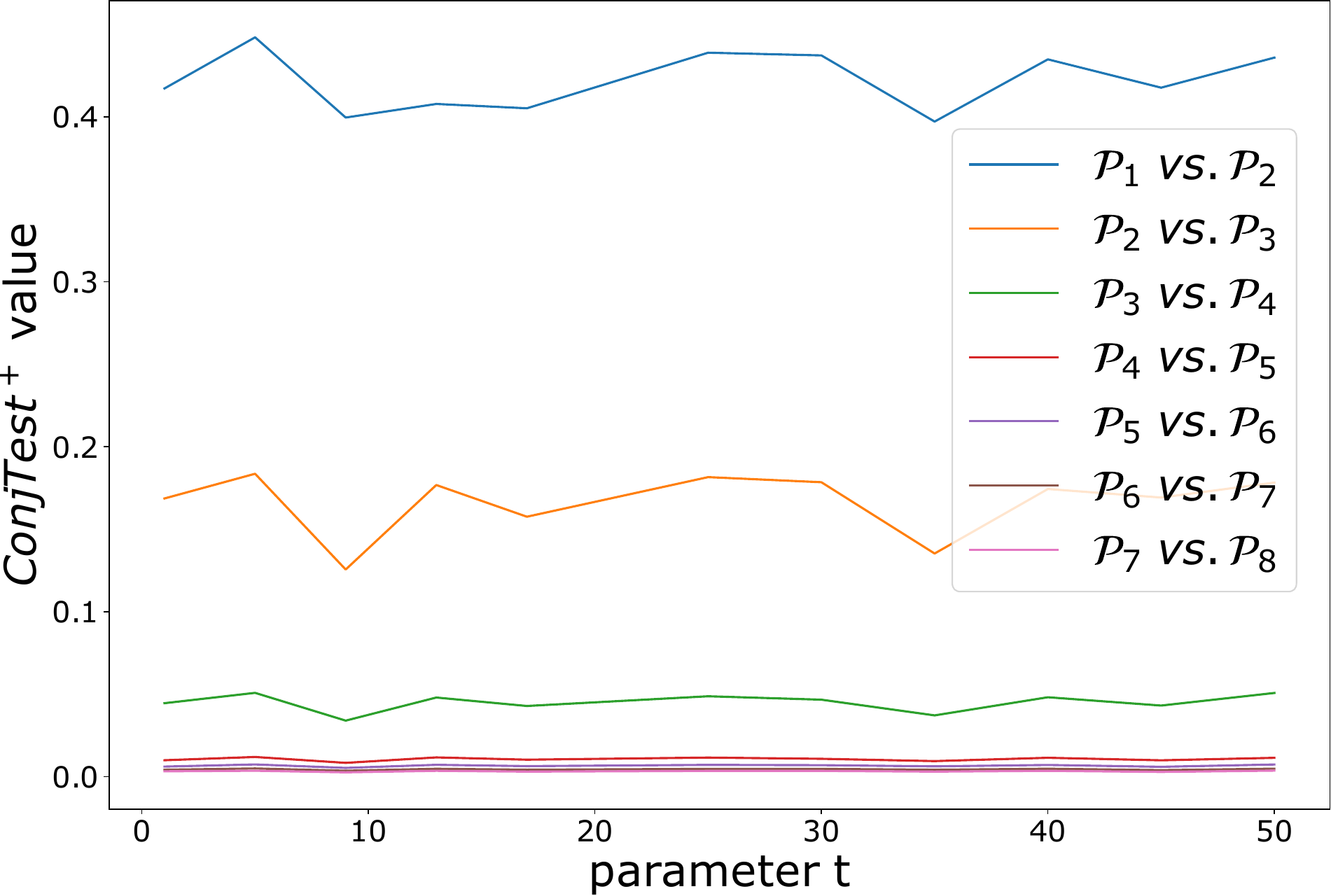}
    
    \caption{A comparison of the conjugacy measures for embeddings of the Klein bottle for consecutive dimensions.
    Top left: $\fnn$ with respect to parameter $r$.
    Top right: $\KNN$ with respect to parameter $k$.
    Bottom left:  $\conjtest^+$ with respect to parameter $k$ ($t=10$ fixed).
    Bottom right: $\conjtest^+$ with respect to parameter $t$ ($k=5$ fixed). 
    }
    \label{fig:klein_embedding}
\end{figure}

\section{Approximation of the connecting homeomorphism}
\label{sec:in_a_search_for_h}
In whole generality, finding the connecting homeomorphism between conjugate dynamical systems, is a very difficult task.
Some prior work has been done in this direction but the existing methods still have many limitations in applying for a broader class of systems. In particular, the works \cite{skufca2007,skufca2008,Zheng09} developed a method to produce conjugacy functions based on a functional fixed-point iteration scheme that can also be generalized to compare non-conjugate dynamical systems in which case the limit point of a fixed-point iteration scheme yields a  function called a ``commuter''. Quantifying how much the commuter function fails to be a homeomorphism (in various measures) led to the notion of a ``homeomorphic defect'' that, as the authors point out, allows one to quantify the dissimilarity of the two dynamical systems. However, the method  has been illustrated on a very few specific examples and could be rigorously mathematically justified only under very restrictive assumptions e.g. uniform contraction of at least one of the systems when comparing systems in one dimension.  Significant problems occur in rigorous extension to systems of higher dimension.  Consequently, later work \cite{bollt2010} extended this theory to allow for multivariate transformations and presented ideas on constructing commuter functions different than fixed-point iteration scheme. This method is based on symbolic dynamics approach which, however, requires existence and finding a general partition for systems being compared. Although the approach of finding a (semi-)conjugacy to a symbolic shift space through generating or Markov partition seems very natural from the point of view of dynamical systems theory, in practice finding a \emph{reasonable} partition even for systems given by explicit equations (not to mention time series of real data) is often not feasible. Moreover, one can face an explosion of computational complexity as the number of symbols increases.  Later work \cite{zheng2013} employs a method of graph matching between the graphs representing the underlying symbolic dynamics or, alternatively,  between the graphs approximating the action of the systems on some eligible partition. Interestingly, the authors show that the permutation matrices that relate the adjacency matrices of the merging graphs
coincide with the solution of Monge’s mass transport problem.

The above earlier works contain valuable ideas on finding to-be-conjugacies or commuter functions and  the defect measures of the arising commuters might serve as a quantification of the dynamical similarity between two given systems. In turn, our proposed tools, $\conjtest$ and $\conjtest^+$ can be applied, among others, to explicitly assess the quality of the matching between the two systems through the commuting functions obtained by the above mentioned methods and these matching functions can be candidates for testing (semi-)conjugacy of given systems by $\conjtest$s. Note also that, contrary to the previous works, $\conjtest$ methods can be applied directly to the time-series since we work on the point clouds and do not need a priori the formulas for systems which generated them - these are only used as benchmark tests.

However, as our contribution and small step forward towards effective algorithms of finding conjugating maps, in this section we present a proof-of-concept gradient-descent algorithm, utilizing the $\conjtest$ as a cost function, to approximate such a connecting homeomorphism. 
More precisely, as an example, we use it to discover an approximation of the map \eqref{eq:log_tent_homoemorphism} that constitutes a topological conjugacy between the tent and the logistic map (see~Section~\ref{ssec:log_tent}). 
Instead of finding an analytical formula approximating the connecting homeomorphism our strategy aims to construct a cubical set representing the map.
Further development and generalization of the presented procedure will be a subject of forthcoming studies.

Consider the following sequence $0=a_1< a_2<\ldots< a_{n+1}=1$.
Denote $\ttA_{i,j}:=[a_i,a_{i+1}]\times[a_j, a_{j+1}]$ and 
    $\mathbb{A}:=\{\ttA_{i,j}\mid i,j\in\{1,2,\ldots n\}\}$.
Let $h:I\rightarrow I$ be an increasing homeomorphism from the unit interval $I$ to itself 
and by $\pi(h):=\{(x,y)\in I\times I\mid y=h(x)\}$ denote the graph of $h$.
We say that a collection $\tth=\{\ttA_{i,j}\in\bbA\mid \inter \ttA_{i,j}\cap\pi(h)\neq\emptyset\}$ is a \emph{the cubical approximation} of $h$ and we denote it by $[h]$.
Equivalently, $\tth$ is the minimal subset of $\bbA$ such that $\pi(h)\subset\bigcup\tth$.
We refer to 
\[
    \ttH:=\{[h]\subset\bbA\mid h:I\rightarrow I \text{ - an increasing homeomorphism}\}
\]
as a family of all \emph{cubical homeomorphisms} of $\bbA$.

In~\ref{apx:approx_h} we show how to construct a class of piecewise linear homeomorphisms for any $\tth\in\ttH$.
We denote a \emph{selector} of $\tth$, that is a homeomorphism representing $\tth$, by $f_\tth$.
Take, as an example, cubical sets marked with yellow cubes in Figure~\ref{fig:x5_logtent_example}. 
At every panel, the blue curve corresponds to the graph of the selector.

The size of family $\ttH$ grows exponentially with the resolution of the grid (the explicit formula for the size of $\ttH$ is given in \ref{apx:approx_h}).
For instance, the number of cubical homeomorphisms for $m=21$ is about $2.6\cdot 10^{14}$.
Thus, it is hopeless to find the optimal approximation of the connecting homeomorphism by a brute examination of all elements of $\ttH$.
Instead, we propose an algorithm based on the gradient descent strategy using $\conjtest$ as a cost function.

\begin{algorithm}
\caption{{\tt ApproximateH}}\label{alg:approximateH}
\begin{algorithmic}[1]
\REQUIRE $\cA, \cB$ -- time series on an interval,
    $\tth_0\in\tth$ -- initial approximation of the homeomorphism,
    $\tt nsteps$ -- number of steps,
    $\tt p$ -- memory size.
\ENSURE $\tt best\_h$ -- approximated connecting homeomorphism
    \STATE $\tth \leftarrow \tth_0$
    \STATE $\tt q \leftarrow$ initialize queue of size $\tt p$ with $\tt null$'s
    \STATE $\tt best\_h \leftarrow h$
    \FOR{$\tt t=0\ to\ nsteps$}
        \STATE $\tt c \leftarrow \{h'\in h \mid h'\in nbhd(h) \text{ and } \hdiff(h,h')\not\subset q\}$
        \IF{$\tt \# c = 0$}
            \STATE \textbf{break}
        \ELSE
            \STATE ${\tt h} \leftarrow {\tt h'\in c}$ with a minimal value of $\hscore(\tth', \cA, \cB)$
            \IF{$\tt score(h, \cA, \cB) < score(best\_h, \cA, \cB)$}
                \STATE $\tt best\_h \leftarrow h$
            \ENDIF
            \STATE{$\tt q.append(\hdiff(h,h'))$} \COMMENT{append the unique element differentiating $\tth$ and $\tth'$}
            \STATE{$\tt q.pop()$} 
        \ENDIF
    \ENDFOR
    \RETURN $\tt best\_h$
\end{algorithmic}
\end{algorithm}

Let $\cA$ and $\cB$ be time series on a unit interval.
Algorithm \ref{alg:approximateH} attempts to find an element of $\ttH$ with as small value of the $\conjtest$ as possible. For that purpose, each element $\tth \in \ttH$ can be assigned with the following score:
\[
    \hscore(\tth, \cA, \cB):=
        \max\left\{
            \conjtest(\cA, \cB; k, t, f_{\tth}), \conjtest(\cB, \cA; k, t, f_{\tth}^{-1})
        \right\}.
\]
Since elements $\tt h,h'\in H$ are collections of sets, the symmetric difference gives a set of cubes differing $\tth$ and $\tth'$.
We denote it by
\[
    \tt\hdiff(\tth, \tth') := (h\setminus h') \cup (h'\setminus h).
\]

Let $\tt h\in\ttH$ be an initial guess for the connecting homeomorphism.
In each step of the algorithm an attempt is made to update it in a way that the $\hscore$ gets improved.
Each iteration of the main loop considers all neighbors $\tth'$ of $\tth$ in $\ttH$ such that $\tt nbhd(h):=\{h'\in\ttH\mid\#\hdiff(h,h')=1\}$ (a unit sphere in a Hamming distance centered in $\tth$).
The element $\tth'$ of $\tt nbhd(h)$ with minimal $\hscore(\tth')$ is chosen for the next iteration of the algorithm.
Note that it might happen that $\hscore(\tth') < \hscore(\tth)$.
This prevents the algorithm from being stuck at a local minimum.
In addition, to avoid orbiting around them we exclude elements of $\tt nbhd(h)$ for which element $\tt\hdiff(h, h')$ is an element added or removed in previous $\tt p$ iterations of the algorithm.
This strategy is more restrictive from just avoiding assigning to $\tth$ the same cubical homeomorphism twice within $\ttp$ consecutive steps which 
still could result in oscillatory changes of some cubes.

We conducted an experiment for the problem studied in Section \ref{ssec:log_tent}, that is, a comparison of time series generated by the logistic and the tent map.
Take the following time series
\begin{equation*}
    \cA = \varrho(f_4, 0.02, 500) 
    \quad\text{ and }\quad
    \cB = \varrho(g_2, 0.87, 500),
\end{equation*}
where $f_4$ and $g_2$ are respectively a logistic and a tent map as in Section \ref{ssec:log_tent}.
We use $\hscore$ with parameters $k=5$ and $t=1$.
As an initial guess of the connecting homeomorphism $\tth_0$ we naively took a cubical approximation of a $h(x)=x^5$ with resolution $m=21$, as presented in the top-left panel of Figure~\ref{fig:x5_logtent_example}.
We run Algorithm \ref{alg:approximateH} for time series $\cA$ and $\cB$ for $1000$ steps with the memory parameter ${\tt p}=m=21$.
Figure~\ref{fig:cts_decay} shows the values of the $\hscore$ for the consecutive approximations.
We can see that the algorithm falls temporarily into local minima, but eventually, thanks to the memory parameter, it escapes them and settles down towards the low score values. 
Figure \ref{fig:x5_logtent_example} shows relations corresponding to the $1$st, $200$th, $400$th, and $612$nd iteration of the algorithm run.
The bottom-right panel, the $612$nd iteration is the relation inducing the lowest score among all iterations. 
The orange curve, at the same panel, is the graph of homeomorphism \eqref{eq:log_tent_homoemorphism} -- the analytically correct map conjugating $f_4$ and $g_2$. Clearly, the iterations are converging towards the right value of the connecting homeomorphism.

Clearly, the presented approach can be applied to any one-dimensional time series.
A generalization of the algorithm will be a subject of further studies.

\begin{figure}
    \centering
    \includegraphics[width=0.475\textwidth]{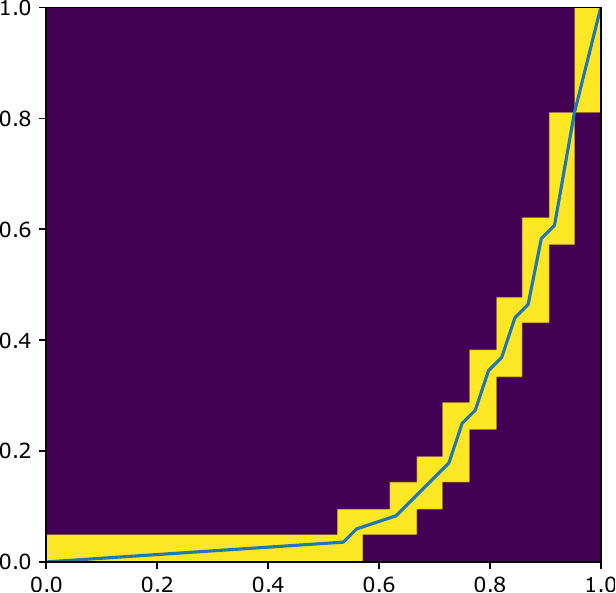}
    \hspace{0.2cm}
    \includegraphics[width=0.475\textwidth]{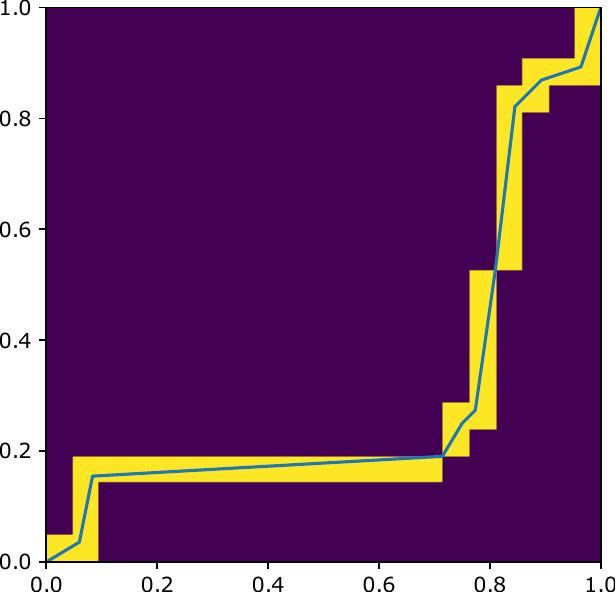}

    \vspace{1.2cm}
    \includegraphics[width=0.475\textwidth]{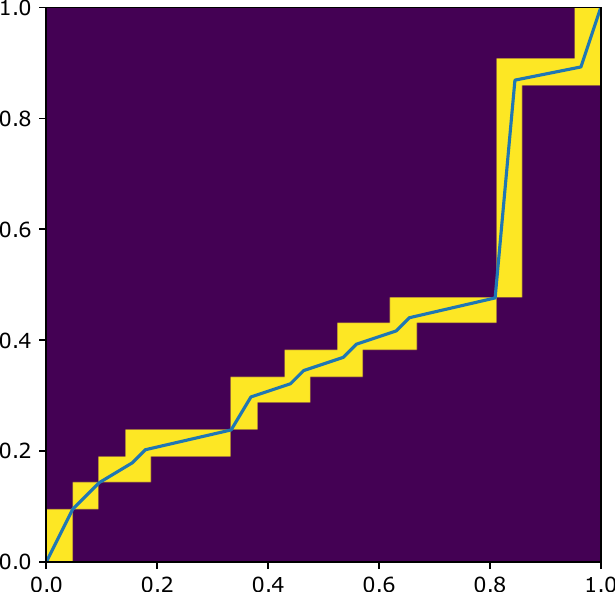}
    \hspace{0.2cm}
    \includegraphics[width=0.475\textwidth]{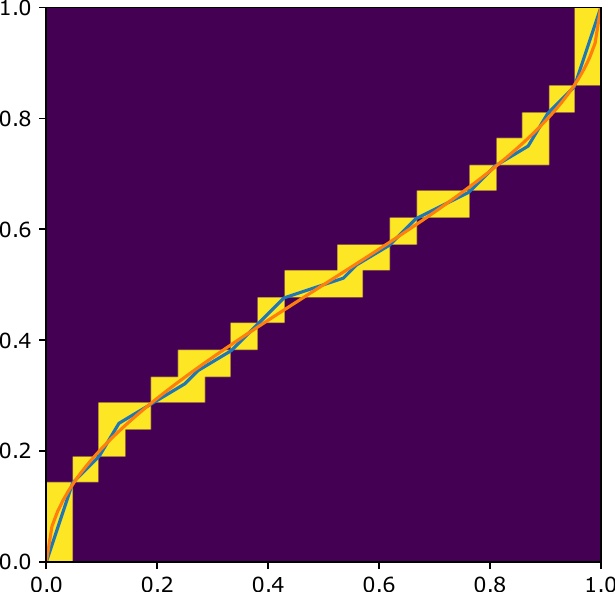}
    \caption{
        Steps $0$, $200$, $400$, and $612$ (top left, top right, bottom left, bottom right, respectively) of the run of Algorithm \ref{alg:approximateH} in a search for the connecting homeomorphism between the logistic and the tent map.
        The blue lines corresponds to a selector of a cubical homeomorphism.
        The orange curve in bottom right panel is a graph of the actual connecting homeomorphism \eqref{eq:log_tent_homoemorphism} between the logistic and the tent map.
    }
    \label{fig:x5_logtent_example}
\end{figure}

\begin{figure}
    \centering
    \includegraphics[width=0.75\textwidth]{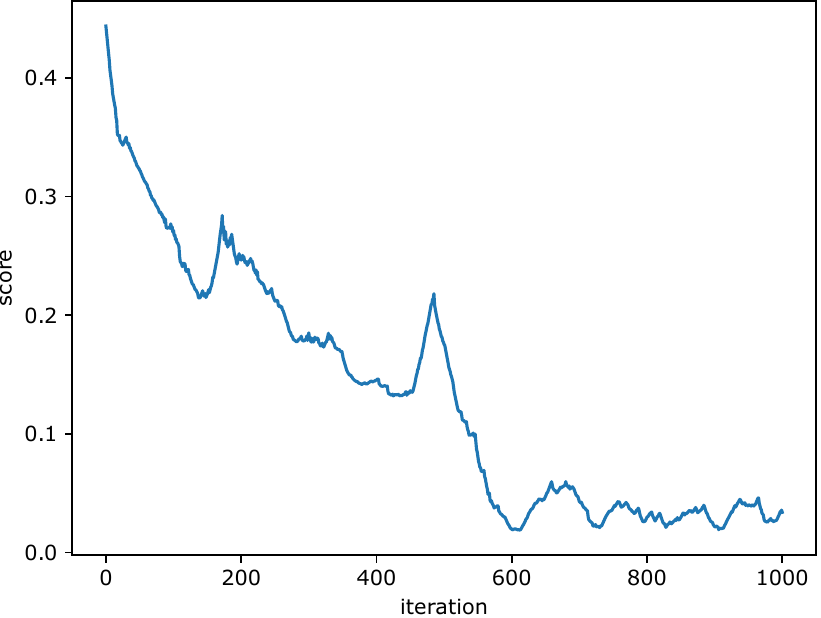}
    \caption{The value of $\hscore(\tth, \cA, \cB)$ for every step of the experiment approximating the connecting homeomorphism between the logistic and the tent map.}
    \label{fig:cts_decay}
\end{figure}

\section{Discussion and Conclusions}\label{sec:disscussion}

There is a considerable gap between theory and practice when working with dynamical systems; In theoretical consideration, the exact formulas describing the considered system is usually known. 
Yet in biology, economy, medicine, and many other disciplines, those formulas are unknown; only a finite sample of dynamics is given. 
This sample contains either sequence of points in the phase space, or one-dimensional time series obtained by applying an observable function to the trajectory of the unknown dynamics. 
This paper provides tools, $\fnn$, $\KNN$, $\conjtest$, and $\conjtest^+$, which can be used to test how similar two dynamical systems are, knowing them only through a finite sample.
Proof of consistency of some of the presented methods is given.

The first method, $\fnn$ distance, is a modification of the classical False Nearest Neighbor technique designed to estimate the embedding dimension of a time series. 
The second one, $\KNN$ distance, has been proposed as an alternative to $\fnn$ that takes into account larger neighborhood of a point, not only the nearest neighbor.
The conducted experiments show a strong similarity of $\fnn$ and $\KNN$ methods. 
Additionally, both methods admit similar requirements with respect to the time series being compared: they should have the same length and their points should be in the exact correspondence, i.e., we imply that an $i$-th point of the first time series is a dynamical counterpart of the $i$-th point of the second time series. 
An approximately binary response characterizes both methods in the sense that they return either a value close to $0$ when the compared time series come from conjugate systems, or a significantly higher, non-zero value in the other case.
This rigidness might be advantageous in some cases.
However, for most empirical settings, due to the presence of various kind of noise, $\fnn$ and $\KNN$ may fail to recognize similarities between time series. Consequently, these two methods are very sensitive to any perturbation of the initial condition of time series as well as the parameters of the considered systems.
However, $\KNN$, in contrast to $\fnn$, admits robustness on a measurements noise as presented in Experiment $\hyperref[sssec:exp_1c]{\text{1C}}$.
On the other hand, $\fnn$ performs better than $\KNN$ in estimating the sufficient embedding dimension (Experiments $\hyperref[sssec:exp_4b]{\text{4B}}$, $\hyperref[sssec:exp_5a]{\text{5A}}$). Moreover, the apparently clear response given by $\fnn$ and $\KNN$ tests might not be correct (see Experiment $\hyperref[sssec:exp_1a]{1A}$, $\cR_1$ vs. $\cR_4$).

\begin{table}
\centering
\begin{footnotesize}
\begin{tabular}{|m{3.cm}|m{2.5cm}|m{2.5cm}|m{2.7cm}|m{2.7cm}|} \hline
\backslashbox{Property}{Method}
            & \thead{$\fnn$} 
            & \thead{$\KNN$} 
            & \thead{$\conjtest$} 
            & \thead{$\conjtest^+$} \\ \hline
Requirements  & 
    \multicolumn{2}{|m{5.cm}|}{
        identical matching between indexes of the elements (in particular the series must be of the same length)
        }  & 
    \multicolumn{2}{|m{5.4cm}|}{
        \begin{itemize}[leftmargin=-2pt]
            \item an exact correspondence between the two time series is not needed 
            \item  allow examining arbitrary, even very complicated potential relations between the series 
            \item can be used for comparison of time series of different length 
            \item require defining the possible (semi)conjugacy $h$ at least locally i.e. giving the corresponding relation between indexes of the elements of the two series 
        \end{itemize}} \\ \hline
Parameters  & 
    only one parameter: $r$ (but one should examine large interval of $r$ values)  & only one parameter: $k$ (but is recommend to check a couple of different $k$ values)  & 
    \multicolumn{2}{|m{5.cm}|}{involve two parameters: $k$ and $t$} \\ \hline Robustness  & 
    \multicolumn{2}{|m{5.cm}|}{
        \begin{itemize}[leftmargin=-2pt]
            \item less robust to noise and perturbation than $\conjtest$ methods
            \item give nearly a binary output 
            \item $\KNN$ seems to admit robustness with respect to the measurement noise 
        \end{itemize}}  & 
            \multicolumn{2}{|m{5.cm}|}{
                \begin{itemize}[leftmargin=-2pt] 
                    \item more robust to noise and perturbation
                    \item the returned answer depends continuously on the level of perturbation and noise compared to the binary response given by $\fnn$ or $\KNN$ 
                \end{itemize}} \\ \hline
    Recurrent properties  & takes into account only the one closest return of a series (trajectory) to each neighborhood 
      & \multicolumn{3}{|m{8.4cm}|}  {takes into account $k$-closest returns} \\ \hline

    Further properties  &    & & more likely to give false positive answer than $\conjtest^+$
    
    & more computationally demanding than $\conjtest$ but usually more reliable \\ \hline
\end{tabular}
\end{footnotesize}
\caption{
    Comparison of the properties of discussed conjugacy measures.  
}
\label{tab:comparisontab}
\end{table}

Both $\conjtest$ and $\conjtest^+$ (collectively called $\conjtest$ methods) are directly inspired by the definition and properties of topological conjugacy. They are more flexible in all considered experiments and can be applied to time series of different lengths and generated by different initial conditions (the first point of the series). 
In contrast to $\fnn$ and $\KNN$, they admit more robust behavior with respect to any kind of perturbation, be it measurement noise (Experiment $\hyperref[sssec:exp_1c]{1C}$), perturbation of the initial condition (Experiments $\hyperref[sssec:exp_1a]{\text{1A}}$, $\hyperref[sssec:exp_2a]{\text{2A}}$, $\hyperref[sssec:exp_3a]{\text{3A}}$, and $\hyperref[sssec:exp_4a]{\text{4A}}$), $t$ parameter (Experiment $\hyperref[sssec:exp_4c]{4C}$), or a parameter of a system (Experiment $\hyperref[sssec:exp_1b]{1B}$). 
In most experiments, we can observe a continuous-like dependence of the test value on the level of perturbations. We see this effect as softening the concept of topological conjugacy by $\conjtest$ methods. 
A downside of this weakening is a lack of definite response whether two time series come from conjugate dynamical systems. 
Hence the $\conjtest$ methods should be considered as a means for a quantification of a dynamical similarity of two processes. Experiments $\hyperref[sssec:exp_1a]{\text{1A}}$, $\hyperref[sssec:exp_2a]{\text{2A}}$, and $\hyperref[sssec:exp_3a]{\text{3A}}$ show that both methods, $\conjtest$ and $\conjtest^+$, capture essentially the same information from data. 
In general, $\conjtest$ is simpler and, thus, computationally more efficient. 
However, Experiment $\hyperref[sssec:exp_4a]{\text{4A}}$ shows that $\conjtest$ (in contrast to $\conjtest^+$) does not work well in the context of embedded time series, especially when the compared embeddings are constructed from the same time series. 
Experiments $\hyperref[sssec:exp_4b]{\text{4B}}$ and $\hyperref[sssec:exp_5a]{\text{5A}}$ show that the variation of $\conjtest$ methods with respect to the $t$ parameter can also be used for estimating a good embedding dimension. 
Further comparison between $\conjtest$ and $\conjtest^+$ reveals that $\conjtest^+$  is more computationally demanding than $\conjtest$, but also more reliable. 
Indeed,  in our examples with rotations on the circle and torus and with the logistic map, both these tests gave nearly identical results, but the examples with the Lorenz system show that $\conjtest$ is more likely to give a false positive answer.  
This is due to the fact that $\conjtest$  works well if the map $h$ connecting time series $\cA$ and $\cB$  is a reasonably good approximation of the true conjugating homeomorphism, but in case of embeddings and naive, point-wise connection map, as in some of our examples with Lorenz system, the Hausdorff distance in formula \eqref{eq:conjtest} might vanish resulting in false positive.

The advantages of $\conjtest$ and $\conjtest^+$ methods come with the price of finding a connecting map relating two time series. 
When it is unknown, in the simplest case, one can try the map $h$ which is defined only locally i.e. on points of the time series and provide an order- preserving matching of indexes of corresponding points in the time series. The simplest example of such a map is an identity map between indices.  The question of finding an optimal matching is, however, much more challenging and will be a subject of a further study. Nonetheless, in Section \ref{sec:in_a_search_for_h} and Appendix \ref{apx:approx_h} we present preliminary results approaching this challenge.

A convenient summary of the presented methods is gathered in Table~\ref{tab:comparisontab}.

\appendix
\section{Cubical homeomorphisms}\label{apx:approx_h}

This appendix offers additional characterization of the family of cubical homeomorphisms introduced in Section~\ref{sec:in_a_search_for_h}.

At first, observe that elements of $\ttH$ have the following straightforward observations.
\begin{proposition}\label{prop:01_cubes}
Let $[h]\in\ttH$. Then, $\ttA_{1,1},\ttA_{n,n}\in[h]$.
\end{proposition}
\begin{proof}
    Since $h$ is an increasing homeomorphism it follows that $(0,0), (1,1)\in\pi(h)$.
    In consequence, $\ttA_{1,1},\ttA_{n,n}\in[h]$, because these are the only elements of~$\bbA$ containing $(0,0)$ and $(1,1)$.
\end{proof}

We picture the idea of the following simple proposition with Figure \ref{fig:cub_hoemo_three_cases}.
\begin{proposition}\label{prop:cases_cub_homeo}
    Let $\pi(h)\cap \inter\ttA_{i,j}\neq\emptyset$ then exactly one of the following holds
    \begin{enumerate}[label=(\arabic*)]
        \item\label{it:right_crossing_point} 
            $\inter\ttA_{i+1,j}\cap\pi(h)\neq\emptyset$ and 
            $\inter\ttA_{i, j+1}\cap\pi(h)=\emptyset$, when $h(a_{i+1})\in(a_j, a_{j+1})$,
        \item\label{it:up_crossing_point} 
            $\inter\ttA_{i,j+1}\cap\pi(h)\neq\emptyset$ and 
            $\inter\ttA_{i+1, j}\cap\pi(h)=\emptyset$, when $h^{-1}(a_{i+1})\in(a_i, a_{i+1})$,
        \item\label{it:diag_crossing_point} 
            $\inter\ttA_{i+1,j+1}\cap\pi(h)\neq\emptyset$, 
            $\inter\ttA_{i+1, j}\cap\pi(h)=\emptyset$ and 
            $\inter\ttA_{i, j+1}\cap\pi(h)=\emptyset$, when $h(a_{i+1})=a_{j+1}$.
    \end{enumerate}
\end{proposition}
\begin{figure}[h]
    \centering
    \includegraphics[scale=0.35]{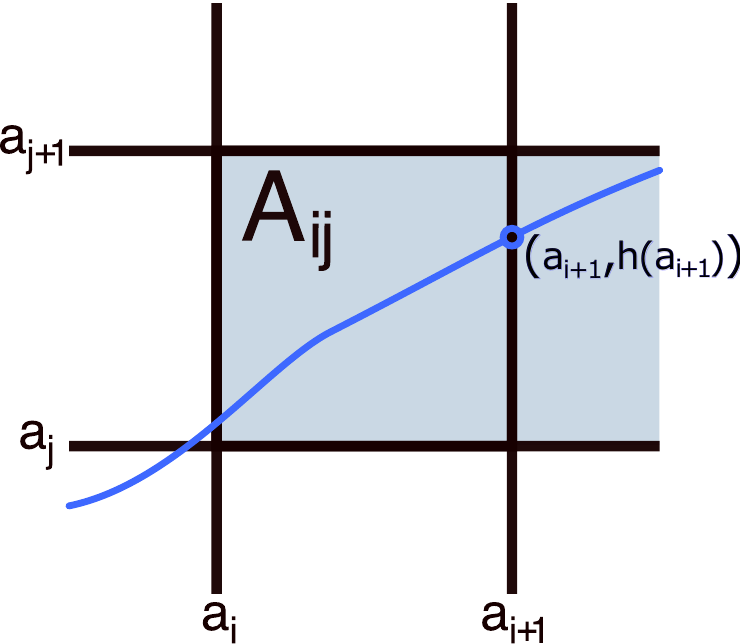}
    \includegraphics[scale=0.35]{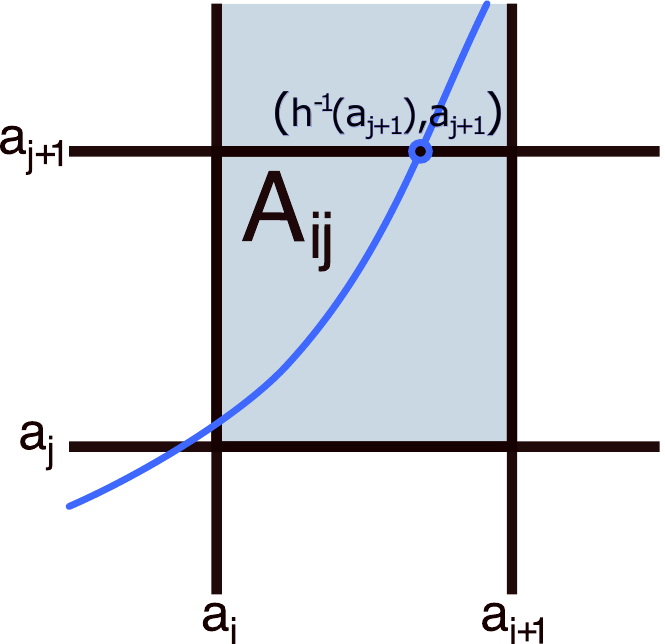}
    \includegraphics[scale=0.35]{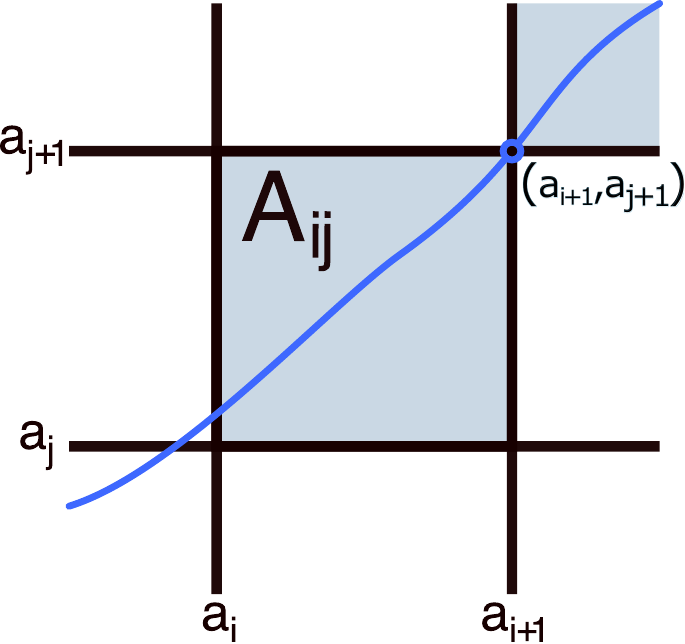}
    \caption{From left to right, cases \ref{it:right_crossing_point}, \ref{it:up_crossing_point} and \ref{it:diag_crossing_point} of Proposition \ref{prop:cases_cub_homeo}.}
    \label{fig:cub_hoemo_three_cases}
\end{figure}

Again, the proof for the next proposition is a consequence of basic properties of homeomorphism $h$ such that $[h]=\tth$.
\begin{proposition}\label{prop:cub_h_properties}
Let $\tth\in\ttH$. Then    
\begin{enumerate}[label=(\roman*)]
    \item\label{it:cub_h_property_1i}
    for every $i\in\{1,\ldots, n\}$ set $\bigcup\{\ttA_{i,j}\in\tth\mid j\in\{0,1,\ldots, n\}\}$ is nonempty and connected,
    \item\label{it:cub_h_property_1j}
    for every $j\in\{1,\ldots, n\}$ set $\bigcup\{\ttA_{i,j}\in\tth\mid i\in\{0,1,\ldots, n\}\}$ is nonempty and connected.
    \item\label{it:cub_h_property_2i}
    if $\ttA_{i,j}\in\tth$ then for every $i'>i$ and $j'<j$ we have 
        $\ttA_{i',j'}\not\in\tth$,
    \item\label{it:cub_h_property_2j}
    if $\ttA_{i,j}\in\tth$ then for every $j'>j$ and $i'<i$ we have 
        $\ttA_{i',j'}\not\in\tth$,
\end{enumerate}
\end{proposition}

The above propositions implies that every element $\tth\in\ttH$ can be seen as a path starting from element $\ttA_{1,1}$ and ending at $\ttA_{n,n}$.
In particular, $\tth$ can be represented as a vector of symbols 
    $R$ (right, incrementation of index $i$, case \ref{it:right_crossing_point}), 
    $U$ (up, incrementation of index $j$, case \ref{it:up_crossing_point}), and 
    $D$ (diagonal, incrementation of both indices, case \ref{it:diag_crossing_point}).
The vectors do not have to be of the same length.
In particular, a single symbol D can replace a pair of symbols R and U.
Denote by $n_R$, $n_U$ and $n_D$ the number of corresponding symbols in the vector.
As indices $i$ and $j$ have to be incremented from $1$ to $n$ we have the following properties:
\begin{align}\label{eq:cub_homeo_vec_conditions}
    0\leq n_R, n_U, n_D\leq n-1, \ n_R+n_U+2n_D = 2(n-1) \ \text{ and }\ n_R=n_U.
\end{align}

Actually, any vector $\ttV$ of symbols $\{R,U,D\}$ satisfying the above conditions \eqref{eq:cub_homeo_vec_conditions} corresponds to a cubical homeomorphism.
We show it by constructing a piecewise-linear homeomorphism $h$ such that $[h]=\tth$ for any $\tth$ represented by $\ttV$. 
We refer to the constructed $h$ as a \emph{selector} of $\tth$.
In particular, Algorithm \ref{alg:selector} produces a sequence of points corresponding to points of non-differentiability of the homeomorphism.
We have five type of points, the starting point $(0,0)$ (type B), the ending point $(1,1)$ (type E), and points corresponding to subsequences $UR$ (type $UR$), $ RU$ (type $RU$) and $D$ (type $D$). 
Proposition \ref{prop:alg_gives_selector} shows that the map generated by the algorithm is an actual homeomorphism.

\begin{algorithm}
\caption{{\tt FindSelector}}\label{alg:selector}
\begin{algorithmic}[1]
\REQUIRE $\ttV$ -- a vector of symbols $\{R,U,D\}$ satisfying \eqref{eq:cub_homeo_vec_conditions},
    $p$ -- a parameter for breaking points selection 
\ENSURE $L$ -- a sequence encoding the selector
    \STATE $L \leftarrow \{(0,0)\}$
    \STATE $\tt prev \leftarrow Null$
    \STATE $i, j \leftarrow 0$
    \FOR{$\tt s \in V$}
        \IF{${\tt prev}=U$ and $\tts=R$} 
            \STATE $L = L \cup\{
                (p\,a_i + (1-p)\,a_{i+1}, (1-p)\,a_j + p\,a_{j+1})
                \}$ 
        \ELSIF{ ${\tt prev}=R$ and $\tts=U$} 
            \STATE $L = L \cup\{
                ((1-p)\,a_i + p\,a_{i+1}, p\,a_j + (1-p)\,a_{j+1})
                \}$ 
        \ELSIF{$\tts=D$} 
            \STATE $L = L \cup\{
                (a_{i+1}, a_{j+1})
                \}$ 
        \ENDIF
        \STATE{$\tt prev \leftarrow s$}
        \IF{$\tts = R$ or $\tts = D$}
            \STATE $i \leftarrow i+1$
        \ENDIF
        \IF{$\tts = U$ or $\tts = D$}
            \STATE $j \leftarrow j+1$
        \ENDIF
    \ENDFOR 
    \STATE $L \leftarrow \{(1,1)\}$ 
    \RETURN $L$
\end{algorithmic}
\end{algorithm}

\begin{proposition}\label{prop:alg_gives_selector}
    Let $\ttV$ be a vector of symbols $\{R,U,D\}$ satisfying \eqref{eq:cub_homeo_vec_conditions} and $\tth$ the corresponding cubical set.
    Let $L=\{(x_1,y_1),(x_2,y_2),\ldots,(x_K,y_K)\}$ be a sequence of points generated by Algorithm \ref{alg:selector} for $\ttV$.
    Then, for every $k\in\{1,2,\ldots K-1\}$ following properties are satisfied:
    \begin{enumerate}[label=(\roman*)]
        \item\label{it:selector_monotonicity} $x_k< x_{k+1}$ and $y_k< y_{k+1}$,
        \item\label{it:selector_contained} if $(x_k,y_k)\in L$ then 
            $(x_k,y_k)+(1-t)(x_k,y_{k+1})\in\bigcup\tth$ for $t\in[0,1]$.
    \end{enumerate}
\end{proposition}
\begin{proof}
    First, note that if $(x_k,y_k)$ is of type $UR$ or $RU$ then 
        we have $(x_k,y_k)\in\inter A_{i,j}$.
    If $(x_k, y_k)$ is of type $D$
        we have $(x_k, y_k)=\ttA_{i,j}\cap\ttA_{i+1,j+1}$.
    In particular, in that case $x_k, y_k\in\{a_2, a_3,\ldots,a_{n}\}$.

    Sequence $L$ always begins $(x_0,y_0)=(0,0)$ and ends with $(x_K,y_K)=(1,1)$.
    By Proposition \ref{prop:01_cubes} we have $\ttA_{0,0}, \ttA_{n,n}\in\tth$.
    The above observations shows that for every $(x_k,y_k)$ with $k\in\{1,2\ldots,K-1\}$ we have $0<x_k,y_k<1$.
    Thus, two first and two last points of $L$ satisfies \ref{it:selector_monotonicity}.
    If $(x_1, y_1)$ is of type $UR$ then it follows that $\ttV$ begins with a sequence of $U$'s.
    We have $(x_1, y_1)\in\inter\ttA_{0, j}$ for some $j>0$.
    By Proposition \ref{prop:cub_h_properties}\ref{it:cub_h_property_1j} all $\ttA_{0,j'}\in\tth$ for $0\leq j'\leq j$.
    Hence, the interval spanned by $(x_0,y_0)$ and $(x_1,y_1)$ is contained in $\bigcup\tth$ proving \ref{it:selector_contained}.
    The cases when $(x_1, y_1)$ is of type $RU$ or $D$ as well as analysis of points $(x_{n-1}, y_{n-1})$ and $(x_{n}, y_{n})$ follows by similar argument.

    Let $(x,y)$ and $(x',y')$ be two consecutive points of $L$.
    Suppose that $(x,y)$ is of type $UR$ and $(x',y')$ of type $RU$.
    This situation arises when two symbols $U$ are separated by a positive number of symbols $R$ (see Figure~\ref{fig:selector_cases} top).
    It follows that
    \begin{align*}
        (x,y) &= (p\,a_i + (1-p)\,a_{i+1}, (1-p)\,a_j + p\,a_{j+1})\inter \ttA_{i,j},\\
        (x',y') &= ((1-p)\,a_{i'} + p\,a_{i'+1}, p\,a_{j} + (1-p)\,a_{j+1})\inter \ttA_{i',j},
    \end{align*}
    where $i < i'$.
    Thus,
    \begin{align*}
        y' - y &= pa_{j} + (1-p)a_{j+1} - ((1-p)a_j + pa_{j+1}) \\
            &= 2p a_j + a_{j+1} - a_j > a_{j+1} - a_j > 0.
    \end{align*}
    Consequently, we get $x<x'$ and $y<y'$ proving \ref{it:selector_monotonicity} for this case.
    By Proposition \ref{prop:cub_h_properties}\ref{it:cub_h_property_1j} we get that $\ttA_{i'',j}\in\tth$ for all $i\leq i''\leq i'$.
    Hence, the interval spanned by $(x,y)$ and $(x',y')$ is contained in $\bigcup\tth$ proving \ref{it:selector_contained}.

    The case when $(x,y)$ is of type $RU$ and $(x',y')$ of type $UR$ is analogous.

    Suppose that $(x,y)$ is of type $UR$ and $(x',y')$ of type $D$.
    This situation arises when symbols $U$ and $D$ are separated by a positive number of symbols $R$ (see Figure~\ref{fig:selector_cases} middle).
    It follows that
    \begin{align*}
        (x,y) &= (p\,a_i + (1-p)\,a_{i+1}, (1-p)\,a_j + p\,a_{j+1}) \in\inter\ttA_{i,j},\\
        (x',y') &= (a_{i'+1}, a_{j+1}) = \ttA_{i',j}\cap\ttA_{i'+1,j+1},
    \end{align*}
    where $i < i'$.
    It follows that $x<x'$ and $y<y'$ proving \ref{it:selector_monotonicity} for this case.
    Again, by Proposition \ref{prop:cub_h_properties}\ref{it:cub_h_property_1j} we can prove \ref{it:selector_contained}.
    
    The case when $(x,y)$ is of type $RU$ and $(x',y')$ of type $D$ is analogous.
    
    Now, suppose that $(x,y)$ is of type $D$ and $(x',y')$ of type $UR$.
    This situation arises when symbols $D$ and $R$ are separated by a positive number of symbols $U$ (see Figure~\ref{fig:selector_cases} bottom).
    It follows that
    \begin{align*}
        (x,y) &= (a_{i+1}, a_{j+1}) = \ttA_{i,j}\cap\ttA_{i+1,j+1},\\
        (x',y') &= (p\,a_{i+1} + (1-p)\,a_{i+2}, (1-p)\,a_{j'} + p\,a_{j'+1}) \in\inter\ttA_{i+1,{j'+1}},
    \end{align*}
    where $j < j'$.
    It follows that $x<x'$ and $y<y'$ proving \ref{it:selector_monotonicity} for this case.
    By Proposition \ref{prop:cub_h_properties}\ref{it:cub_h_property_1i} follows property \ref{it:selector_contained}.
    
    The case when $(x,y)$ is of type $D$ and $(x',y')$ of type $RU$ is analogous.
    
    Finally, if both $(x,y)$ and $(x',y')$ are of type $D$ it follows that
    \begin{align*}
        (x,y) &= (a_{i+1}, a_{j+1}) = \ttA_{i,j}\cap\ttA_{i+1,j+1},\\
        (x',y') &= (a_{i+2}, a_{j+2}) = \ttA_{i+1,j+1}\cap\ttA_{i+2,j+2}.
    \end{align*}
    Thus, $(x,y)$ and $(x',y')$ are the opposite corners of cube $\ttA_{i+1,j+1}$ which immediately gives both properties \ref{it:selector_monotonicity} and \ref{it:selector_contained}.
\end{proof}

\begin{figure}
    \centering
    \includegraphics[scale=0.26]{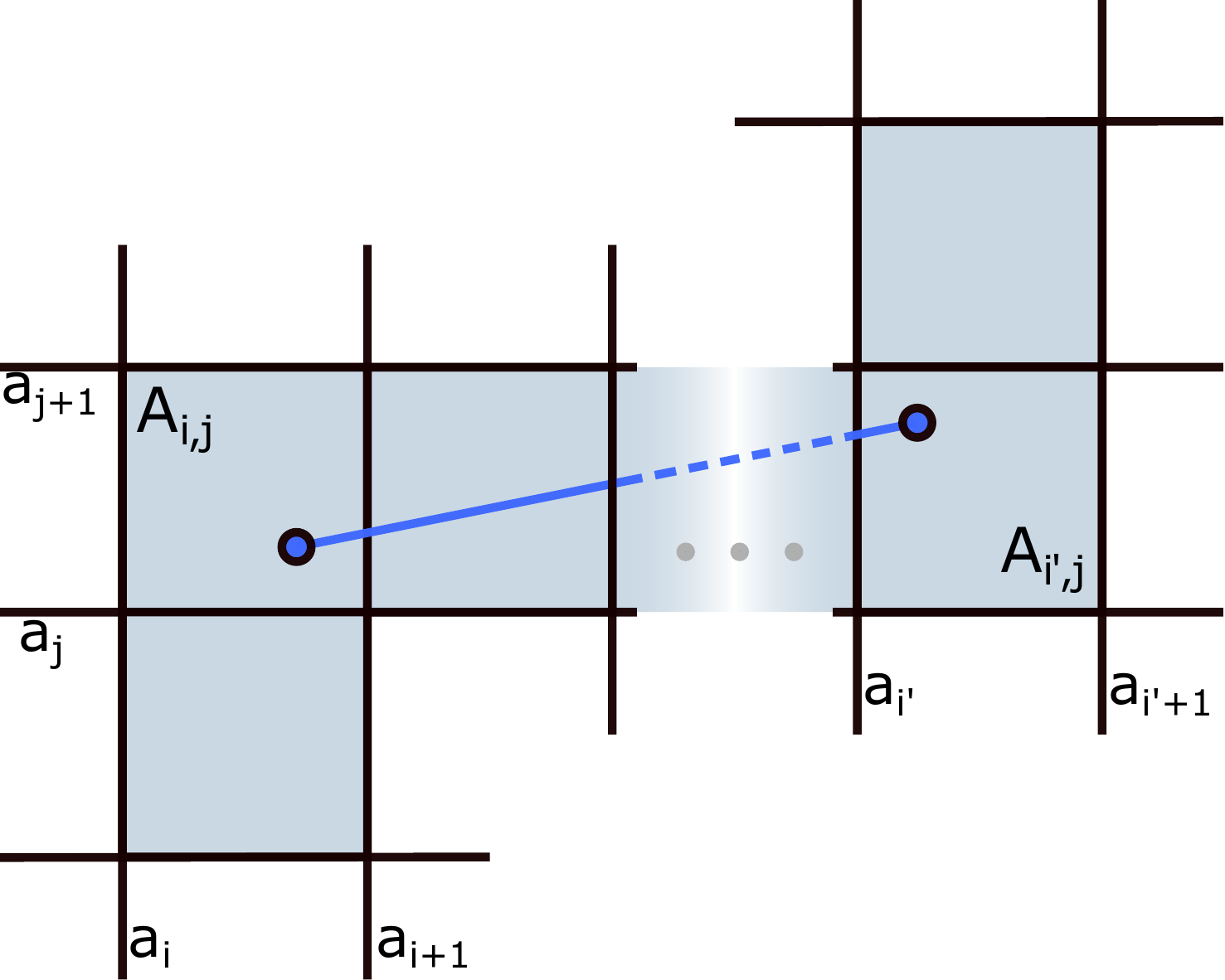}

    \hrule
    \includegraphics[scale=0.26]{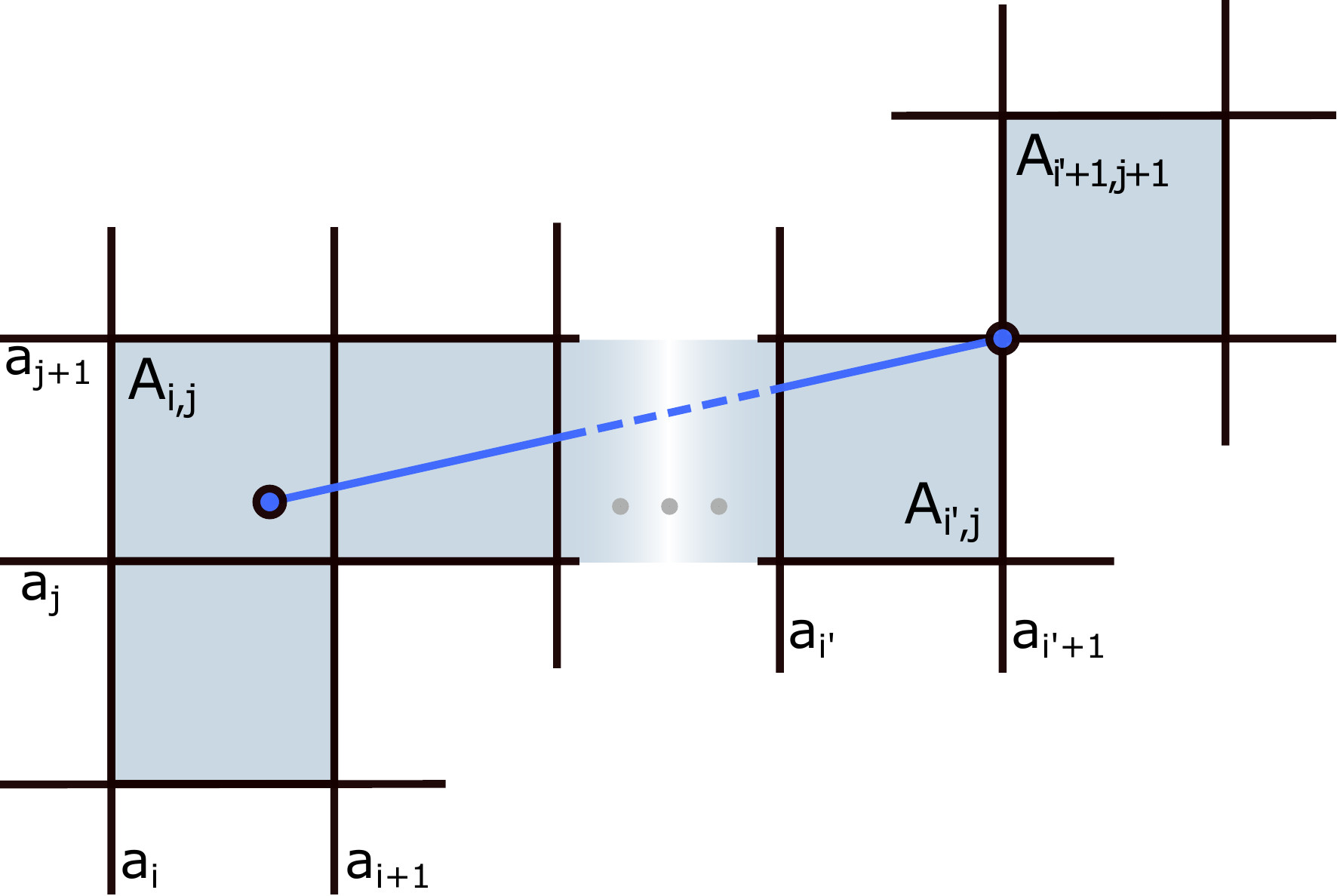}
    
    \hrule
    \includegraphics[scale=0.26]{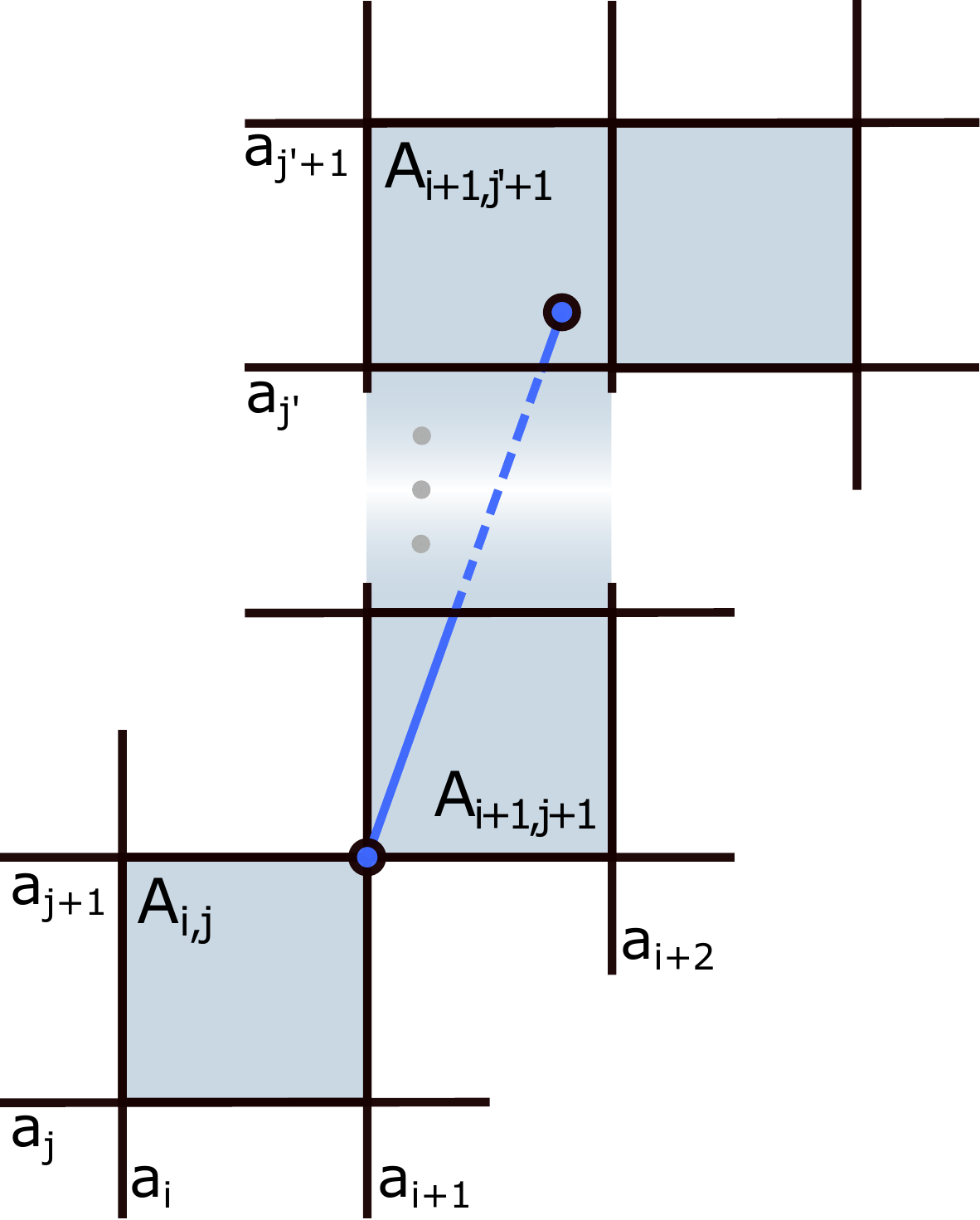}
    \caption{Segments of the piecewise linear homeomorphism being a selector constructed by Algorithm \ref{alg:selector} for a certain cubical homeomorphism $\tth$.
    Three panels corresponds to cases when:
        point of type $UR$ is followed by point of type $UR$ (top),
        point of type $UR$ is followed by point of type $D$ (middle),
        point of type $D$ is followed by point of type $UR$ (bottom).
    }
    \label{fig:selector_cases}
\end{figure}

By counting all possible vectors of symbols $\{R,U,D\}$ satisfying \eqref{eq:cub_homeo_vec_conditions} we obtain an exact size of family $\ttH$. 
In case of $n_D=0$, the vector has size $2(n-1)$ and, therefore, we get ${{2(n-1)}\choose (n-1)}$ ways of ordering symbols $R$ and $U$.
If $n_D=1$ then the vector size is $2(n-1)-1=2n-3$ and $n_R=n-2$.
Hence, we have ${{2n-3}\choose{n-2}}$ choices of slots for symbols R and we have choose a place for $D$ symbol among the remaining $2n-3-(n-2)=n-1$ slots.
Thus, the total number of ordering for $n_D=1$ is ${{2n-3}\choose{n-2}} (n-1)$.
In the general case, we get ${{2n-2-n_D}\choose{n-1-n_D}}{{n-1}\choose {n_D}}$.
Finally, the total number of vectors of $\ttH$ is given by the following formula:
\begin{equation*}\label{eq:size_of_H}
    \sum_{n_D=0}^{n-1} {{2n-2-n_D}\choose{n-1-n_D}}{{n-1}\choose {n_D}}.
\end{equation*}

\bibliographystyle{siamplain}
\bibliography{refs}

\end{document}